\theoremstyle{plain}
\newtheorem{theorem}{Theorem}[section]
\newtheorem{proposition}[theorem]{Proposition}
\newtheorem{lemma}[theorem]{Lemma}
\theoremstyle{definition}
\newtheorem{remark}[theorem]{Remark}
\newtheorem{hypothesis}[theorem]{Hypothesis}
\newcommand{\E}[1]{\mathbb{E} \left[ #1 \right]}
\newcommand{\R}{\mathbb{R}}
\newcommand{\C}{\mathcal{C}}
\newcommand{\F}{\mathcal{F}}
\newcommand{\hac}{\mathcal{H}}
\newcommand{\axi}{|\xi|}
\newcommand{\intt}{\int_0^t}
\newcommand{\hmin}{H_{\inf}}
\newcommand{\hmax}{H_{\sup}}
\newcommand{\beq}{\begin{equation}}
\newcommand{\eeq}{\end{equation}}
\begin{document}
\title{Convergence in law for quasi-linear SPDEs}
\date{\today}
\author{Maria Jolis\thanks{Departament de Matemàtiques, Universitat Autònoma de Barcelona, 08193
Bellaterra, Catalonia, Spain, E-mail: maria.jolis@uab.cat}, 
Salvador Ortiz-Latorre\thanks{Department of Mathematics, University of Oslo, P.O. Box 1053 Blindern,
	N-0316 Oslo, Norway, E-mail: salvadoo@math.uio.no} and 
Lluís Quer-Sardanyons\thanks{Departament de Matemàtiques, Universitat Autònoma de Barcelona, 08193
Bellaterra, Catalonia, Spain, E-mail: lluis.quer@uab.cat}}
\maketitle
\begin{abstract}
We consider the quasi-linear stochastic wave and heat equations in $\mathbb{R}^d$ with $d\in \{1,2,3\}$ and $d\geq 1$, respectively, and perturbed by an additive Gaussian noise which is white in time and has a homogeneous spatial correlation with spectral measure $\mu_n$. We allow the Fourier transform of $\mu_n$ to be a genuine distribution. Let $u^n$ be the mild solution to these equations. We provide sufficient conditions on the measures $\mu_n$ and the initial data to ensure that $u^n$ converges in law, in the space of continuous functions, to the solution of our equations driven by a noise with spectral measure $\mu$, where $\mu_n\to\mu$ in some sense. We apply our main result to various types of noises, such as the anisotropic fractional noise. We also show that we cover existing results in the literature, such as the case of Riesz kernels and the fractional noise with $d=1$. 
\end{abstract}

\medskip

\noindent {\it MSC 2020}: 60H15, 60B10, 60G60.

\medskip

\noindent {\it Keywords}: stochastic wave equation; stochastic heat equation; weak convergence; random fields; space-time homogeneous noise.

\section{Introduction}

We consider the stochastic wave equation
\begin{equation}
	\tag{SWE\ensuremath{_{n}}}\begin{cases}
		\dfrac{\partial^{2}u^{n}}{\partial t^{2}}(t,x)- \Delta u^n(t,x) = b(u^{n}(t,x))+\dot{W}^{n}(t,x),\\[0.2cm] 
		u^{n}(0,x)=u_{0}(x),\quad x\in\mathbb{R}^{d},\\[0.1cm] 
		u^{n}_{t}(0,x)=v_{0}(x),\quad x\in\mathbb{R}^{d},
	\end{cases}\label{eq: waven}
\end{equation}
defined in $(t,x)\in[0,\infty)\times\mathbb{R}^{d}$ with $d\in\left\{ 1,2,3\right\}$, and 
the stochastic heat equation
\begin{equation}
\tag{SHE\ensuremath{_{n}}}\begin{cases}
\dfrac{\partial u^{n}}{\partial t}(t,x)-\dfrac12 \Delta u^n(t,x) = b(u^{n}(t,x))+\dot{W}^{n}(t,x),\\[0.2cm]
u^{n}(0,x)=u_{0}(x),\quad x\in\mathbb{R}^{d},
\end{cases}\label{eq: heatn}
\end{equation}
defined in $(t,x)\in[0,\infty)\times\mathbb{R}^{d}$,  $d\geq 1$. 
The initial conditions $u_{0},v_{0}$ are deterministic functions
satisfying some assumptions which will be specified later on. The function $b$ is assumed to be globally Lipschitz. For
any $n\geq1$, the noise $\dot{W}^{n}$ is assumed to be white in
time and colored in space. We now give its detailed definition.

Let $(\Omega,\mathcal{A},\mathbb{P})$ be a complete probability space. On
the linear space $\mathcal{D}:=\mathcal{C}_{0}^{\infty}([0,\infty)\times\mathbb{R}^{d})$ of infinitely differentiable functions with compact support,
consider a spatially homogeneous Gaussian noise $\{W^{n}(\varphi),\ \varphi\in\mathcal{D}\}$,
namely a Gaussian stochastic process indexed on $\mathcal{D}$ such that
$\mathbb{E}\left[W^{n}(\varphi)\right]=0$, for all $\varphi\in\mathcal{D}$,
and with covariance structure 
\begin{equation}
\mathbb{E}\left[W^{n}(\varphi)W^{n}(\psi)\right]=\int_{0}^{\infty}\int_{\mathbb{R}^{d}}\mathcal{F}\varphi(t,\cdot)(\xi)\overline{\mathcal{F}\psi(t,\cdot)(\xi)}\mu_{n}(d\xi)dt,\quad\varphi,\psi\in\mathcal{D},
\label{eq:19}
\end{equation}
where $\mu_{n}$ is a non-negative tempered measure on $\mathcal{B}(\mathbb{R}^{d})$,
for all $n\geq1$. We refer to $\mu_{n}$ as the spectral
measure of the noise $\dot{W}^{n}$, and we recall that $\mu_{n}$
is necessarily symmetric (see \cite[Chap. VII, Théorème XVII]{Schwartz}). In \eqref{eq:19},
$\mathcal{F}$ denotes the Fourier transform
on $L^{1}(\mathbb{R}^{d})$, which is defined by 
$$\F f(\xi)=\int_{\R^d}e^{-i <\xi,x>}f(x)dx, \quad f\in L^1(\R^d),$$
where $<\xi, x>=\sum_{i=1}^{d}\xi_i x_i$ is the Euclidean inner product in $\R^d$. 
As usual, we introduce the Hilbert space $\hac_n$, which is the completion of $\mathcal{D}$ with respect to the inner product
\[
\langle \varphi,\psi\rangle_n:=\mathbb{E}\left[W^{n}(\varphi)W^{n}(\psi)\right], \quad 
\varphi,\psi \in \mathcal{D}.
\]
Then, the noise $W^n$ can be extended to a family of centered and Gaussian random variables 
$\{W^n(g),\, g\in \hac_n\}$ such that 
\[
\E{W^n(g_1)W^n(g_2)}=\langle g_1,g_2\rangle_n, \quad g_1,g_2\in\hac_n.
\]
For any $g\in \hac_n$, we say that the Gaussian random variable $W^n(g)$ is the Wiener integral of $g$ and we use the notation 
\[
\int_0^\infty \int_{\R^d} g(t,x)W^n(dt,dx):=W^n(g).
\]
All stochastic integrals appearing throughout the paper will be considered in this sense,
Owing to \cite[Lem. 3.2]{NQ}, one can deduce that any deterministic function $t\in\R_+\to g(t)$ with values in the space of distributions with rapid decrease and satisfying 
\[
\int_0^\infty\int_{\R^d} |\F g(t)(\xi)|^2 \mu_n(d\xi)dt<\infty,
\] 
belongs to $\hac_n$. Indeed, the hypotheses of \cite[Lem. 3.2]{NQ} require that $g$ is a non-negative distribution, but taking a close look at the proof one realizes that such a condition is not necessary here.  

We point out that we do not assume that the Fourier transform of the measure $\mu_n$ (in the sense of Schwartz distributions) is a function (or a measure). The latter case corresponds to the theory developed by Dalang in \cite{DalangEJP}. This is a key observation, because we aim to cover, at least, the case in which the spatial covariance structure is that of a fractional Brownian motion with Hurst index $H_n\in (0,1)$ (see Section \ref{sec: frac1}). This corresponds to the spectral measure
\beq
\mu_n(d\xi)=C_{H_n} |\xi|^{1-2H_n} d\xi, \quad \xi\in \R,
\label{eq:371}
\eeq
where $H_n\in (0,1)$ and the constant $C_{H_n}$ is given in \eqref{eq:14}. We note that Dalang's setting would only allow us to deal with the case $H_n\in[\frac12,1)$. If $H_n\in (0,\frac 12)$, we recall that the Fourier transform of $\mu_n$ is a genuine distribution (see \cite[Ch. 1, Sec. 3]{Gel}).

The aim of the paper is to provide sufficient conditions on the family of spectral measures $\mu_n$, $n\geq 1$, and the initial data, ensuring that the solution $u^n$ converges in law, in the space $\C([0,T]\times \R^d)$ of continuous functions, to the random field $u$ which solves the same kind of stochastic PDEs but driven by a Gaussian spatially homogeneous noise with spectral measure $\mu$, where $\mu_n\to\mu$ in some sense. On the space  $\C([0,T]\times \R^d)$, we consider the usual topology of uniform convergence in compact sets. We refer the reader to Theorem \ref{thm: main} for the precise statement of the the main result, and the assumptions on $\mu_n$ are given by Hypotheses {\bf{(H1)}} and {\bf{(H2)}} below. Indeed, as it will be made precise in Lemma \ref{lem: mu}, the measures $\mu_n$ and $\mu$ fulfill the following integrability conditions: there exists $q\in (0,2)$ such that
\[
\int_{\mathbb{R}^{d}}\frac{\mu_{n}(d\xi)}{1+|\xi|^{q}}<\infty \quad
\text{and}\quad \int_{\mathbb{R}^{d}}\frac{\mu(d\xi)}{1+|\xi|^{q}}<\infty.
\] 
We point out that these conditions are natural and consistent with the existing results on path continuity for stochastic PDEs (see, e.g., \cite{MS,Sanz-Sarra,SS}).

The main motivation for considering such a problem comes from the results obtained in \cite{GJQ-Bernoulli} (see also \cite{GJQ-SPA} for the linear multiplicative counterpart), where the authors consider $d=1$, $\mu_n$ is given by \eqref{eq:371} and $\mu=\mu_0$, with $H_n\to H_0$. In the present paper, we consider space dimensions greater than 1, and we make sure that the latter case is covered by our result, as well as other two important examples. Namely, the anisotropic fractional noise, which is tackled in Section \ref{sec: ani}, and the Riesz kernel (see Section \ref{sec: riesz}). In the latter example, the analogous problem of weak convergence has already been studied in \cite{Bezdek} and \cite{Tao} for the one-dimensional heat and wave equations, respectively. In the quasi-linear models that we are considering here, their results are particular cases of our Theorem \ref{thm: main}. However, we should mention that in \cite{Bezdek,Tao} the authors consider a general non-linear multiplicative noise. We have stuck to the quasi-linear form of equations \eqref{eq: waven} and \eqref{eq: heatn} because we aim at having a sufficiently general result which could cover {\it rough} noises in space. In this sense, we postpone the study of the corresponding linear multiplicative settings (Hyperbolic and Parabolic Anderson Models) for future work, since the needed techniques are completely different from the type of considerations that we are using in the present paper. We also tried to verify that our main assumptions are fulfilled for the isotropic fractional noise (see Section \ref{sec: iso}). Indeed, in the latter case, we deduce the form of the corresponding spectral measure (we could not find a reference where this was specified) and we show that it does not even satisfy Dalang's condition, unless $d=1$, which corresponds to the setting considered in \cite{GJQ-Bernoulli}. 
Finally, we also mention that continuity in law for the solution to one-dimensional stochastic PDEs driven by a time-space correlated noise has been addressed in \cite{Balan}. 

At this point, let us summarize the strategy that we have followed in order to prove our main result. First of all, we assume that both the drift term and the initial data vanish. Hence, the solution of \eqref{eq: waven} (resp. \eqref{eq: heatn}) is explicitly given by the following mean-zero Gaussian random field:
\[
v^n(t,x)=\int_0^t\int_{\R^d} G_{t-s}(x-y)W^n(ds,dy),\quad (t,x)\in [0,T]\times \R^d,
\]
where $G$ is the fundamental solution of the wave (resp. heat) equation (see Section \ref{sec: main} for the precise formulas). Here, we first show that the family of probability laws of $\{v_n,\, n\geq 1\}$ is tight in the space $\C([0,T]\times \R^d)$. For this, we apply a multidimensional tightness criterion, given in Theorem \ref{thm: criterion}, which seems to be well-known in the literature. Nevertheless, we have not been able to find its proper proof, so we have added it for the sake of completeness (see Appendix \ref{sec: app}). We conclude this part by identifying the limit law. Taking into account that $v^n$ and the limit candidate admit versions with continuous paths, and that both are Gaussian processes, it suffices to show the convergence of the corresponding covariance functions.  

In order to deal with the general case, that is with non-vanishing drift and initial data, we proceed as follows. First, we consider the stochastic wave equation and the stochastic heat equation with bounded drift coefficient. In this cases, we use a path-by-path argument in order to show that the solution $u^n$ has a version with continuous trajectories and that the main result on weak convergence holds. This method is based on showing that $u^n$ can be represented as the image of the stochastic convolution through a certain  continuous functional $F$, almost surely. More precisely,  for any $\eta\in \C([0,T]\times \R^d)$, we define $z:=F(\eta)\in \C([0,T]\times \R^d)$ to be the solution of the following deterministic integral equation:
\beq
z(t,x)=\eta(t,x)+\int_0^t\int_{\R^d} G_{t-s}(x-y)b(z(s,y))dyds,\quad (t,x)\in [0,T]\times \R^d,
\label{eq: 00}
\eeq
where $G$ is the fundamental solution of the wave (resp. heat) equation. This methodology has two important features:

\begin{itemize}
	\item It allows us to prove that $u^n$ and $u$ admit versions with continuous paths, as well as the validity of our main result Theorem \ref{thm: main}, under the minimal assumptions on the initial data. That is, those needed to have existence and uniqueness of solution (see Theorem \ref{thm: existence}). 
	\item We establish two versions of Gronwall lemma adapted to the case of the wave equation and the case of the heat equation with bounded drift, which have interest for itself. These results can be seen as higher dimensional extensions of \cite[Lem. 4.2]{GJQ-Bernoulli} and \cite[Lem. 4.4]{GJQ-Bernoulli}, respectively.	
\end{itemize}  

The above method cannot be applied to the stochastic heat equation with arbitrary Lipschitz drift. The reason is that deterministic integral equation \eqref{eq: 00} is not well-posed in this case.  Instead, our strategy here has been the following. First, we prove that $u^n$ and $u$ admit versions with continuous paths. For this, we need to slightly strengthen the assumptions on the initial condition $u_0$. Next, we show that the family of laws of $\{u^n,\, n\geq 1\}$ is tight in the space $\C([0,T]\times \R^d)$. Finally, we identify the limit law by proving the convergence of the corresponding finite dimensional distributions. For this, we make use of a truncation of the drift $b$ and take advantage of the results for the case of bounded drift. 	

Finally, we also point out that, in the case of the stochastic wave equation \eqref{eq: waven}, we consider space dimensions less than or equal to 3. This is because for higher dimensions the corresponding fundamental solution is a very irregular object, namely a genuine distribution which is not non-negative anymore. Although in the mild form 
of \eqref{eq: waven} it is possible to give a proper sense to the underlying stochastic convolution for any space dimension, it is not clear at all how to deal with the integral term involving the drift $b$ in our setting. It is worth mentioning that this problem was solved in \cite{CD} in the case where the initial data vanish, by making use of the spatially-homogeneous structure of the solution.   

The paper is organized as follows. In Section \ref{sec:hyp}, we introduce the main hypotheses on the spectral measures $\mu_n$, $n\geq 1$, we state the main result of the paper and we provide examples of spectral measures for which our result applies. Section \ref{sec: conv-linear} is devoted to deal with the weak convergence for the linear case. More precisely, the tightness property is studied in Section \ref{sec: tightness}, splitting the computations for wave and heat equations, and the convergence of the corresponding covariance functions is tackled in Section \ref{sec: cov}. In Section \ref{sec: exis-cont}, we deal with the existence and uniqueness of mild solution to equations \eqref{eq: waven} and \eqref{eq: heatn}, and we also prove the corresponding solutions have continuous versions. For the latter to be achieved, we consider the three cases that we already mentioned above: wave equation (Section \ref{sec: wave}), heat equation with bounded drift (Section \ref{sec: heat-bounded}) and heat equation with arbitrary Lipschitz drift (Section \ref{sec: heat-general}).  Finally, Section \ref{sec: proof-main} is devoted to prove the main result of the paper for the general case. In the Appendix, we state and prove a multidimensional tightness criterion which has been applied several times throughout the paper.


\section{Hypotheses, main result and examples}
\label{sec:hyp}

In this section, we first introduce the hypotheses on the family of spectral measures $\{\mu_n,\, n\geq 1\}$ that we will consider, together with two auxiliary results. Next, we define what we understand by the solution to equations \eqref{eq: waven} and \eqref{eq: heatn} and we state the main result of the paper. Finally, we provide examples of spectral measures $\mu_n$ for which our main result applies.    

\subsection{Hypotheses}

This section is devoted to present the hypotheses on the family of spectral measures
$\{\mu_{n},\,n\geq1\}$ that will be considered throughout the paper. We will also provide characterizations of the main hypotheses below which will be useful in some of the main proofs. 

Consider the following assumptions:

\bigskip

\noindent \textbf{(H1)} There exists $q\in(0,2)$ such that 
\beq
\sup_{n\geq1}\int_{\mathbb{R}^{d}}\frac{\mu_{n}(d\xi)}{1+|\xi|^{q}}<\infty.
\label{eq:16}
\eeq

\medskip

\noindent \textbf{(H2)} It holds that 
\[
\lim_{n\to\infty}\int_{\mathbb{R}^{d}}f(\xi)\mu_{n}(d\xi)=\int_{\mathbb{R}^{d}}f(\xi)\mu(d\xi),
\]
for any continuous function $f$ such that 
\begin{equation}
|f(\xi)|\le C\frac{1}{1+|\xi|^{2}},\;\mbox{ for any }\xi\in\mathbb{R}^{d},\label{condAco}
\end{equation}
where $C$ is some positive constant, and $\mu$ is some 
measure on $\mathcal{B}(\mathbb{R}^{d})$.

\medskip

\begin{remark}\label{rmk:5}
	\textbf{(H1)} is equivalent to imposing estimate 
	\eqref{eq:16} with a parameter $q$ as close as we want to $2$. Indeed, assume that 
	hypothesis \textbf{(H1)} holds and take $r\in [q,2)$. Let us verify that 
	\beq
	\sup_{n\geq1}\int_{\mathbb{R}^{d}}\frac{\mu_{n}(d\xi)}{1+|\xi|^r}<\infty.
	\label{eq:20}
	\eeq
	First, we have
	\[
	\sup_{n\geq1}\int_{\{|\xi|\leq 1\}}\frac{\mu_{n}(d\xi)}{1+|\xi|^r}
	\leq \sup_{n\geq1}\int_{\{|\xi|\leq 1\}}\mu_{n}(d\xi) \leq 
	2 \sup_{n\geq1}\int_{\{|\xi|\leq 1\}}\frac{\mu_{n}(d\xi)}{1+|\xi|^q}<\infty.
	\]
	Secondly, since $r\geq q$, it clearly holds that 
	\[
	\sup_{n\geq1}\int_{\{|\xi|>1\}}\frac{\mu_{n}(d\xi)}{1+|\xi|^r}
	\leq  \sup_{n\geq1}\int_{\{|\xi|>1\}}\frac{\mu_{n}(d\xi)}{1+|\xi|^q}<\infty.
	\]
	 \qed
\end{remark}

\medskip

The following lemma verifies that the $\mu$ in {\bf(H2)} is a well-defined spectral measure. 

\begin{lemma}\label{lem: mu}
	Assume that Hypotheses \textbf{(H1)} and \textbf{(H2)} are satisfied. Then, $\mu$ defines a non-negative and symmetric tempered measure and there exists $q\in (0,2)$ such that  
	\beq
	\int_{\R^d} \frac{\mu(d\xi)}{1+|\xi|^q}<\infty.
	\label{eq:472}
	\eeq
	Moreover, $\mu$ is unique. 
\end{lemma}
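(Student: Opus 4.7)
The plan is to establish each claimed property of $\mu$ separately by combining \textbf{(H2)} with carefully chosen test functions, using \textbf{(H1)} whenever a uniform-in-$n$ bound is required. Non-negativity is built into the hypothesis that $\mu$ is a measure, and may alternatively be read off by testing \textbf{(H2)} against arbitrary non-negative $f$ satisfying \eqref{condAco}.

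For symmetry, I would note that the class of admissible test functions in \textbf{(H2)} is invariant under the involution $f(\xi)\mapsto f(-\xi)$. Since each $\mu_n$ is symmetric, $\int f(-\xi)\mu_n(d\xi) = \int f(\xi)\mu_n(d\xi)$ for every admissible $f$, and passing to the limit via \textbf{(H2)} gives the same identity for $\mu$, forcing $\mu$ to coincide with its reflection. For the integrability bound \eqref{eq:472} (and hence the tempered character), the natural weight $\xi\mapsto (1+|\xi|^q)^{-1}$ is \emph{not} admissible in \textbf{(H2)} precisely because $q<2$, so I would truncate: pick $\chi_R\in C_c^\infty(\R^d)$ with $0\leq\chi_R\leq 1$ and $\chi_R\equiv 1$ on $\{|\xi|\leq R\}$, and set $\phi_R(\xi):=\chi_R(\xi)/(1+|\xi|^q)$. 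Then $\phi_R$ is continuous with compact support, so it trivially satisfies \eqref{condAco}. Applying \textbf{(H2)} to $\phi_R$ and bounding
\[
\int \phi_R\,d\mu_n \;\leq\; \sup_{n\geq 1}\int \frac{\mu_n(d\xi)}{1+|\xi|^q} \;=:\; M \;<\;\infty
\]
by \textbf{(H1)}, one concludes $\int \phi_R\,d\mu\leq M$; monotone convergence as $R\to\infty$ then yields \eqref{eq:472}, which also implies that $\mu$ is tempered.

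Finally, uniqueness is immediate: any other measure $\mu'$ satisfying \textbf{(H2)} in place of $\mu$ must agree with $\mu$ on all $f\in C_c(\R^d)$, since compactly supported continuous functions are admissible in \textbf{(H2)}; the Riesz representation theorem then forces $\mu=\mu'$. The only real obstacle is the integrability step, where the natural weight falls just outside the admissible class of \textbf{(H2)} (because $q<2$), compelling the cutoff-and-monotone-convergence detour that bridges \textbf{(H1)} and \textbf{(H2)}.
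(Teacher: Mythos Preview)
Your proposal is correct and follows essentially the same approach as the paper: symmetry is inherited from the $\mu_n$ by passing the reflection identity through \textbf{(H2)}, the integrability bound \eqref{eq:472} is obtained by truncating the weight $(1+|\xi|^q)^{-1}$ so that it becomes admissible in \textbf{(H2)} and then invoking \textbf{(H1)} before letting the truncation go (the paper uses Fatou's lemma where you use monotone convergence), and uniqueness follows from agreement on $C_c(\R^d)$ (the paper spells this out via approximation of rectangles rather than citing Riesz).
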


\begin{proof}
Let us first check the uniqueness. If another measure $\mu'$ satisfies that
\[
\lim_{n\to\infty}\int_{\mathbb{R}^{d}}f(\xi)\mu_{n}(d\xi)=\int_{\mathbb{R}^{d}}f(\xi)\mu'(d\xi),
\]
for any continuous function $f$ satisfying (\ref{condAco}), then
\[
\int_{\mathbb{R}^{d}}f(\xi)d\mu(d\xi)=\int_{\mathbb{R}^{d}}f(\xi)d\mu'(d\xi).
\]
On the other
hand, the indicator function of a rectangle $A=(a_{1},b_{1})\times\cdots\times\left(a_{d},b_{d}\right)$,
${\bf 1}_{A}$, is a pointwise limit of a sequence $\{f_{m},\,m\geq1\}$
of continuous functions with compact support and satisfying $|f_{m}|\le{\bf 1}_{A}$,
for all $m\geq1$. From the above two facts, we can deduce that $\mu=\mu'$
on all the Borel $\sigma$-field. 

It is clear that $\mu$ has to be non-negative. Next, due to the symmetry of $\mu_{n}$, for all $n\geq1$, we first
have that 
\begin{equation}
\int_{\mathbb{R}^{d}}f(\xi)\mu(d\xi)=\int_{\mathbb{R}^{d}}f(-\xi)\mu(d\xi),\label{symm}
\end{equation}
for any continuous function $f$ with compact support. Let $A=(a_{1},b_{1})\times\cdots\times\left(a_{d},b_{d}\right)$
and we will prove that $\mu(A)=\mu(-A)$. Take a sequence $\{f_{m},\,m\geq1\}$
as before. Then, by (\ref{symm}), 
\[
\mu(A)=\lim_{m\to\infty}\int_{\mathbb{R}^{d}}f_{m}(\xi)\mu(d\xi)=\lim_{m\to\infty}\int_{\mathbb{R}^{d}}f_{m}(-\xi)\mu(d\xi).
\]
The latter limit is equal to $\mu(-A)$, because $\lim_{m\to\infty}f_{n}(-\xi)={\bf 1}_{A}(-\xi)={\bf 1}_{-A}(\xi)$.
Hence, $\mu$ is symmetric. 
Finally, we take $q\in (0,2)$ of Hypothesis \textbf{(H1)} and verify that 
\[
\int_{\mathbb{R}^d}\frac{\mu(d\xi)}{1+|\xi|^q}<\infty.
\]
Let $\{f_m,\, m\geq 1\}$ be a sequence of non-negative continuous functions with compact support such that 
\[
\lim_{m\to\infty} f_m(\xi)=\frac{1}{1+|\xi|^q},\quad \text{for all} \; \xi\in \R^d,
\]
and, for any $m\geq 1$,  
\[
f_m(\xi)\leq \frac{1}{1+|\xi|^q},\quad \text{for all} \; \xi\in \R^d.
\]
Then, applying Fatou's lemma and using Hypotheses {\bf (H1)} and {\bf (H2)}, we can argue as follows:
\begin{align*}
	\int_{\mathbb{R}^d}\frac{\mu(d\xi)}{1+|\xi|^q} & \leq 
	\liminf_{m\to\infty} \int_{\mathbb{R}^d} f_m(\xi) \mu(d\xi) \\
	& = \liminf_{m\to\infty} \left( \lim_{n\to\infty} \int_{\mathbb{R}^d} f_m(\xi) \mu_n(d\xi)\right)\\
	& \leq \liminf_{m\to\infty} \left( \sup_{n\geq 1} \int_{\mathbb{R}^d} f_m(\xi) \mu_n(d\xi)\right)\\
	& \leq \sup_{n\geq 1} \int_{\mathbb{R}^d}\frac{\mu_{n}(d\xi)}{1+|\xi|^q} <\infty. 
\end{align*}
Therefore, the proof is complete. 
\end{proof}

\begin{remark}\label{rmk: Dalang}
	Hypothesis {\bf(H1)} and the previous Lemma \ref{lem: mu} imply that $\mu_n$, $n\geq 1$, and $\mu$ satisfy Dalang's condition:
	\[
	\int_{\R^d} \frac{\mu_n(d\xi)}{1+|\xi|^2}<\infty, \quad
	\int_{\R^d} \frac{\mu(d\xi)}{1+|\xi|^2}<\infty.
	\]
\end{remark}	

The following result provides a characterization of hypothesis \textbf{(H1)}
which will be used later on in the paper.

\begin{lemma}\label{lem:1} Hypothesis \textbf{(H1)} is equivalent
to the statement: there exists $q\in (0,2)$ such that the following two conditions are satisfied:  
\begin{enumerate}
\item[(a)] There is a constant $C>0$ such that, for all $h\in(0,1]$, 
\[
\sup_{n\geq1}\mu_{n}(B_{1/h})\le C\,h^{-q},
\]
where $B_{r}:=\{\xi\in\mathbb{R}^{d}, |\xi|\leq r\}$. 
\item[(b)] It holds 
\[
\sup_{n\geq1}\int_{\{|\xi|>1\}}\frac{\mu_{n}(d\xi)}{|\xi|^q}<\infty.
\]
\end{enumerate}
\end{lemma}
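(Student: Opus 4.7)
The plan is to prove both implications by direct comparison between the integrand $1/(1+|\xi|^q)$ appearing in \textbf{(H1)} and the ``geometric'' quantities in (a) and (b), using the same exponent $q$ throughout. The key observation is that on $B_1$ the weight $1/(1+|\xi|^q)$ is comparable to a constant, while on $\{|\xi|>1\}$ it is comparable to $|\xi|^{-q}$.

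For the forward direction, assume \textbf{(H1)} holds with some $q\in(0,2)$ and set $M:=\sup_{n\geq 1}\int_{\R^d}\mu_n(d\xi)/(1+|\xi|^q)<\infty$. To get (a), I would fix $h\in(0,1]$ and note that on $B_{1/h}$ we have $1+|\xi|^q\le 1+h^{-q}\le 2h^{-q}$ (since $h^{-q}\ge 1$), so
\[
\mu_n(B_{1/h})\le (1+h^{-q})\int_{B_{1/h}}\frac{\mu_n(d\xi)}{1+|\xi|^q}\le 2Mh^{-q},
\]
uniformly in $n$. For (b), on $\{|\xi|>1\}$ we have $1+|\xi|^q\le 2|\xi|^q$, hence $1/|\xi|^q\le 2/(1+|\xi|^q)$, and integrating gives $\sup_n\int_{\{|\xi|>1\}}\mu_n(d\xi)/|\xi|^q\le 2M$.

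For the reverse direction, assume (a) and (b) hold with some $q\in(0,2)$; I would verify \textbf{(H1)} with the same $q$ by splitting the integral at $|\xi|=1$. On $\{|\xi|\le 1\}$ the integrand satisfies $1/(1+|\xi|^q)\le 1$, and specialising (a) to $h=1$ yields $\sup_n\mu_n(B_1)\le C$, which controls this piece. On $\{|\xi|>1\}$ we use $1/(1+|\xi|^q)\le 1/|\xi|^q$, so (b) supplies the bound directly. Adding these two estimates gives \textbf{(H1)}.

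No step should present any real obstacle; the argument is essentially a two-sided inequality for $1+|\xi|^q$ together with a split into the ball $B_1$ and its complement. The only mild point of care is making sure the same exponent $q$ works in both directions and that the constants in (a) are used at $h=1$ (not at a vanishing $h$, where they blow up). No other results from the paper are needed.
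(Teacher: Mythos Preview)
Your proposal is correct and follows essentially the same approach as the paper's proof: both directions use the same exponent $q$, the same split of $\R^d$ at $|\xi|=1$, and the same elementary comparisons $1+|\xi|^q\le 2h^{-q}$ on $B_{1/h}$ and $1+|\xi|^q\le 2|\xi|^q$ on $\{|\xi|>1\}$.
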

\begin{proof}
First, we check that conditions (a) and (b) imply hypothesis \textbf{(H1)}. It holds that 
\[
\int_{\mathbb{R}^{d}}\frac{\mu_{n}(d\xi)}{1+|\xi|^{q}}=\int_{\{|\xi|\le1\}}\frac{\mu_{n}(d\xi)}{1+|\xi|^{q}}+\int_{\{|\xi|>1\}}\frac{\mu_{n}(d\xi)}{1+|\xi|^{q}}=:I_{1}^{n}+I_{2}^{n}.
\]
By (b), we have that 
\[
I_{2}^{n}\le \int_{\{|\xi|>1\}}\frac{\mu_{n}(d\xi)}{|\xi|^{q}}\le C,
\]
for some constant $C$ independent of $n$. On the other hand, 
\[
I_{1}^{n}\le\mu_{n}(B_{1})\le C,
\]
with $C$ independent of $n$, due to condition (a). Therefore, 
\[
\sup_{n\geq1}\int_{\mathbb{R}^{d}}\frac{\mu_{n}(d\xi)}{1+|\xi|^{q}}\le\sup_{n\geq1}I_{1}^{n}+\sup_{n\geq1}I_{2}^{n}<\infty.
\]
We now prove that \textbf{(H1)} implies (a) and (b).
We have, for all $h\in(0,1]$, 
\[
\sup_{n\geq1}\int_{\{|\xi|\le1/h\}}\mu_{n}(d\xi)\le(1+h^{-q})\sup_{n\geq1}\int_{\mathbb{R}^{d}}\frac{\mu_{n}(d\xi)}{1+|\xi|^{q}}\le2\,h^{-q}\sup_{n\geq1}\int_{\mathbb{R}^{d}}\frac{\mu_{n}(d\xi)}{1+|\xi|^{q}}=Ch^{-q},
\]
which implies condition (a). Finally, condition (b) follows from the
estimate 
\[
\sup_{n\geq1}\int_{\{|\xi|>1\}}\frac{\mu_{n}(d\xi)}{|\xi|^{q}}\le 
2\sup_{n\geq1}\int_{\R^d}\frac{\mu_{n}(d\xi)}{1+|\xi|^{q}} <\infty.
\]
\end{proof}

\begin{remark}
We also have a characterization of Hypothesis \textbf{(H2)}. 	
Let $\mu_{n}$ be as before and define the finite measure $\rho_{n}$ as follows: 
\[
\rho_{n}(A):=\int_{A}\frac{\mu_{n}(d\xi)}{1+|\xi|^{2}}, \; \;A\in\mathcal{B}(\mathbb{R}^{d}).
\]
Then, Hypothesis \textbf{(H2)} is equivalent to the fact
that $\rho_{n}$ converges weakly to $\rho$, as $n\to\infty$, where
the measure $\rho$ is given by 
\[
\rho(A):=\int_{A}\frac{\mu(d\xi)}{1+|\xi|^{2}}, \; \; A\in\mathcal{B}(\mathbb{R}^{d}).
\]
That is, 
\[
\lim_{n\to\infty}\int_{\mathbb{R}^{d}}f(\xi)\rho_{n}(d\xi)=\int_{\mathbb{R}^{d}}f(\xi)\rho(d\xi),
\]
for any continuous and bounded function $f$.

\end{remark}



\subsection{Main result}
\label{sec: main}

This section is devoted to state the main result of the paper. Before that, we will define the notion of mild solution to equations \eqref{eq: heatn} and \eqref{eq: waven}, and we will comment on some properties related to the initial data.

We denote by $\{\F^n_t,\, t\ge 0\}$ the filtration generated by $W^n$, which is defined by 
\[
\F_t:=\sigma\left( W^n({\bf 1}_{[0,s]}\varphi),\, s\in [0,t], \varphi\in \mathcal{D}\right)\lor  \mathcal{N},
\] 
where $\mathcal{N}$ denotes the family of $\mathbb{P}$-null sets in $\mathcal{A}$.
The solution to equations \eqref{eq: heatn} and \eqref{eq: waven} will
be interpreted in the {\it{mild}} sense. Namely, for any $T>0$, we
say that an adapted and jointly measurable process
$u^n=\{u^n(t,x),\, (t,x)\in [0,T]\times \R\}$ solves \eqref{eq:
	heatn} (resp. \eqref{eq: waven}) if, for all $(t,x)\in[0,T]\times \R^d$,
it holds
\begin{equation}
	u^n(t,x)  = I^d_0(t,x)+ \int_{0}^{t}\int_{\R^d} G_{t-s}(x-y) W^n(ds,dy)  
	+ \int_{0}^{t} \big(b(u^n(s))*G_{t-s}\big)(x) ds, \quad \mathbb{P}\text{-a.s.},
	\label{eq: mild}
\end{equation}
where $u^n(s)$ denotes the function $u^n(s,\cdot)$.
Moreover, $G$ denotes the fundamental solution of the
heat (resp. wave) equation in $\R^d$, $d\geq 1$ (resp. $d\in \{1,2,3\}$) and $I^d_0(t,x)$ is the solution
of the corresponding deterministic linear equation. In the case of the heat equation, $G$ is the following Gaussian kernel:
\begin{equation}
	G_t(x)=\frac{1}{(2\pi t)^{d/2}}e^{-\frac{|x|^2}{2t}},\; (t,x)\in (0,\infty)\times \R^d.
	\label{eq: heat kernel}
\end{equation}
In the case of the wave equation with $d\in\{1,2\}$, $G$ is the function 
\begin{equation}
	G_t(x)=\begin{cases}
		\frac{1}{2}{\bf 1}_{\{|x|<t\}}(x), & \text{wave equation}\;  d=1, \\
		\frac{1}{2\pi} \frac{1}{\sqrt{t^2-|x|^2}} {\bf 1}_{\{|x|<t\}}(x), & \text{wave equation} \; d=2 \\
	\end{cases}
	\label{eq:1}
\end{equation}
Finally, the fundamental solution of the $3$-dimensional wave equation is given by the measure
\begin{equation}
	G_t(dx)= \frac{1}{4\pi t}\sigma_t(dx), \; t>0,
	\label{eq: 3d}
\end{equation}
where $\sigma_t$ denotes the uniform measure on the $3$-dimensional sphere of radius $t$ (see \cite[Chap. 5]{Folland}). In this case, the second integral in \eqref{eq: mild} is given by
\[
 \int_0^t \big(b(u^n(s))*G_{t-s}\big)(x) ds = 
 \int_0^t \int_{\R^d} b(u^n(s,x-y)) G_{t-s}(dy) ds, 
\]
Still in the case of the wave equation, for any $d\in \{1,2,3\}$, a direct computation based on the expression of $G$ shows that, for all $t>0$,
\begin{equation}\label{eq:56}
 \int_{\R^d} G_t(dx)= t.
\end{equation}

Concerning the term $I^d_0$, it is given by
\begin{equation}
	I^d_0(t,x)=\begin{cases}
		(u_0*G_t)(x), & \text{heat equation}, \\
		(v_0*G_t)(x) + \frac{\partial}{\partial t}(u_0*G_t)(x), & \text{wave equation}.\\
	\end{cases}
	\label{eq:15}
\end{equation}
For the wave equation, it holds (see, for instance, \cite[p. 68-77]{Evans}):
$$ 
I^1_0(t,x)=
\frac{1}{2} \left[ u_0(x+t)+u_0(x-t)\right] +
\frac{1}{2}\int_{x-t}^{x+t} v_0(y)\, dy, \quad (t,x)\in (0,\infty)\times \R,
$$ 
which is the so-called d'Alembert's Formula, 
$$I^2_0(t,x)= \frac{1}{2\pi t} \int_{\{|x-y|<t\}} \frac{u_0(y + t v_0 )+ \nabla u_0(y)\cdot (x-y)}{(t^2- |x-y|^2)^{1/2}}\, dy, \quad (t,x)\in (0,\infty)\times \R^2,$$
and
$$ 
I^3_0(t,x)= \frac{1}{4\pi
	t^2} \int_{\R^3} \left( tv_0(x-y) + u_0(x-y) +  \nabla
u_0(x-y)\cdot y \right)\, \sigma_t(dy), \quad (t,x)\in (0,\infty)\times \R^3. 
$$
In the above formulas, we have implicitly  assumed that all integrals are well-defined. Indeed, 
\cite[Lem. 4.2]{Dalang-Quer} exhibits sufficient conditions on $u_0$ and $v_0$
under which such integrals exist and are uniformly bounded with respect to $t$ and $x$. More precisely, we consider the following hypothesis:

\begin{hypothesis} \label{hyp: ic}
	\hspace{1cm}
	\begin{itemize}
		\item[(i)] Heat equation: $u_0$ is measurable and bounded.
		\item[(ii)] Wave equation: When $d=1$, $u_0$ is bounded and continuous, and $v_0$ is bounded and measurable. When $d=2$, $u_0 \in C^1(\R^2)$ and there is $p \in (2,\infty]$ such that $u_0,\nabla u_0, v_0$ all belong to $L^p(\R^2)$. When $d=3$, $u_0 \in C^1(\R^3)$, $u_0$ and $\nabla u_0$ are bounded, and $v_0$ is bounded and continuous.
	\end{itemize}
\end{hypothesis}	

Then, we have:

\begin{lemma}\label{lem: dqs}(\cite[Lem. 4.2]{Dalang-Quer})	
	Assume that Hypothesis \ref{hyp: ic} holds. Then, $I^d_0$ defines a continuous function such that
	\[
	\sup_{(t,x)\in [0,T]\times \R^d} \big|I_0^d(t,x)\big| <\infty.
	\]
\end{lemma}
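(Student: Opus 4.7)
The plan is to treat the four cases---heat equation and wave equation for $d\in\{1,2,3\}$---separately, since the explicit formulas for $I_0^d$ are qualitatively different. In every case, the strategy is the same: obtain the supremum bound by controlling the integrand uniformly in $(t,x)\in[0,T]\times\R^d$, and prove continuity by dominated convergence along sequences $(t_n,x_n)\to(t_0,x_0)$ in $(0,T]\times\R^d$.

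For the heat equation, $I_0^d(t,x)=(u_0*G_t)(x)$ and $G_t$ is a probability density, so $|I_0^d(t,x)|\leq\|u_0\|_\infty$ for all $(t,x)$. For continuity, fix a convergent sequence $(t_n,x_n)$ with $t_n\in[a,b]\subset(0,T]$ and $|x_n|\leq R$; then $u_0(y)G_{t_n}(x_n-y)$ converges pointwise in $y$ and is dominated outside a large ball by a fixed integrable Gaussian envelope, so dominated convergence applies. The one-dimensional wave case is immediate from d'Alembert's formula: $|I_0^1(t,x)|\leq\|u_0\|_\infty+T\|v_0\|_\infty$, and continuity follows from continuity of $u_0$ together with continuity of the indefinite integral of the bounded function $v_0$.

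For $d=2$, change variables $y=x+tz$ in the Poisson-type formula to rewrite $I_0^2(t,x)$ as an integral over the unit ball of a linear combination of $u_0(x+tz)$, $v_0(x+tz)$ and $\nabla u_0(x+tz)\cdot z$ against the weight $(1-|z|^2)^{-1/2}$. Since that weight belongs to $L^{p'}(B_1)$ whenever $p'<2$, equivalently $p>2$, Hölder's inequality together with the translation invariance of the $L^p$-norm yields a bound of the form $C(T)(\|u_0\|_{L^p}+\|v_0\|_{L^p}+\|\nabla u_0\|_{L^p})$ uniform in $(t,x)$; continuity on $(0,T]\times\R^2$ follows from the $L^p$-continuity of translations. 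For $d=3$, Kirchhoff's formula expresses $I_0^3(t,x)$ in terms of spherical means of $v_0$, $u_0$ and $\nabla u_0(x-\cdot)\cdot(\cdot)$ on the sphere of radius $t$ around $x$; since $\sigma_t$ has total mass $4\pi t^2$ and $|y|=t$ on its support, boundedness reduces to the uniform bounds on $u_0$, $\nabla u_0$, $v_0$. Continuity on $(0,T]\times\R^3$ then follows by parametrizing $y=t\omega$ with $\omega$ on the unit sphere and applying dominated convergence, using $u_0\in C^1$ and $v_0$ continuous.

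The main obstacle is the two-dimensional wave case, where the square-root singularity at $|x-y|=t$ forces the use of an $L^p$ integrability assumption with $p>2$ (so that the dual exponent $p'<2$); this is precisely the reason Hypothesis \ref{hyp: ic}(ii) in dimension two is stated in this particular form, and it is the only place in the argument where the regularity of the initial data must interact nontrivially with the geometry of the fundamental solution.
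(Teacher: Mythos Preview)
The paper does not give its own proof; it simply quotes \cite[Lem.~4.2]{Dalang-Quer}. Your case-by-case outline is the natural way to establish the result and matches what one finds in the cited reference.

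There is, however, a genuine gap in your $d=2$ wave argument. After the change of variables $y=x+tz$ you apply H\"older with exponents $p,p'$ to bound the $u_0$-contribution by
\[
\big\|(1-|\cdot|^2)^{-1/2}\big\|_{L^{p'}(B_1)}\,\|u_0(x+t\,\cdot)\|_{L^p(B_1)},
\]
and you claim the second factor is controlled by $\|u_0\|_{L^p}$ via ``translation invariance''. But the map $z\mapsto x+tz$ is a dilation, not a translation: undoing it introduces the Jacobian factor $t^{-2}$, so $\|u_0(x+t\,\cdot)\|_{L^p(B_1)}\le t^{-2/p}\|u_0\|_{L^p(\R^2)}$, which blows up as $t\to 0^+$. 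Your bound is therefore \emph{not} uniform on $[0,T]\times\R^2$. The $v_0$ and $\nabla u_0$ contributions carry an extra factor of $t$, so your H\"older argument for them gives $Ct^{1-2/p}$, which is fine since $p>2$; the issue is isolated to the $u_0$ term.

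The remedy is already implicit in the hypotheses: for $d=2$ one has $u_0\in C^1(\R^2)$ with $u_0,\nabla u_0\in L^p(\R^2)$ for some $p>2$, hence $u_0\in W^{1,p}(\R^2)$, and Morrey's embedding gives $u_0\in L^\infty(\R^2)$. Bounding the $u_0$-integrand pointwise by $\|u_0\|_\infty(1-|z|^2)^{-1/2}$ then yields the uniform estimate directly (the weight integrates to a finite constant over $B_1$), and the same $L^\infty$ bound furnishes the domination needed for continuity.
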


In Section \ref{sec: exis-cont}, we will show that equation \eqref{eq: mild} admits a unique solution (see Theorem \ref{thm: existence} for details). At this point, we can state the main result of the paper:

\begin{theorem}\label{thm: main}
	Let $u^n$ be the solution of equation \eqref{eq: mild}, where $G$ is the fundamental solution of the wave equation (resp. heat equation) and $b$ is a Lipschitz function. 
	
	Assume that Hypotheses {\bf{(H1)}} and {\bf{(H2)}} hold, and consider the following assumptions on the initial data:
	\begin{enumerate}
		\item[(a)] Wave equation: (ii) in Hypothesis \ref{hyp: ic}.
		\item[(b)] Heat equation with bounded drift: (i) in Hypothesis \ref{hyp: ic}.
		\item[(c)] Heat equation with general drift: (i) in Hypothesis \ref{hyp: ic} and $u_0\in \C^\alpha(\R^d)$, for some $\alpha\in (0,1)$.
	\end{enumerate}
	Then, as $n\to\infty$, $u^n$  converges in law, in the space $\mathcal{C}([0,T]\times\R^d)$, to the random field $u$ which solves the equation
	\begin{equation}
		u(t,x)  = I^d_0(t,x)+ \int_{0}^{t}\int_{\R^d} G_{t-s}(x-y) W(ds,dy)  
		+ \int_{0}^{t} \big(b(u)*G_{t-s}\big)(x) ds, 
		\label{eq: mild_u}
	\end{equation} 
for all $(t,x)\in [0,T]\times \R^d$,
	where $W$ denotes a Gaussian spatially homogeneous noise with spectral measure $\mu$ (defined in Hypothesis {\bf{(H2)}}). 
\end{theorem}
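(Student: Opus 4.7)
The plan is to reduce everything to the convergence in law of the stochastic convolution
\[
v^n(t,x):=\int_0^t\int_{\R^d} G_{t-s}(x-y)W^n(ds,dy),\qquad (t,x)\in [0,T]\times\R^d,
\]
in $\C([0,T]\times\R^d)$, and then to lift that convergence to the full solution via a continuity (or an approximation) argument. The work splits naturally according to the three cases (a)--(c) of the statement.

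First I would settle the purely linear case $b\equiv 0$ with vanishing initial data, i.e.\ $u^n=v^n$. Tightness of $\{\mathcal{L}(v^n):n\geq 1\}$ in $\C([0,T]\times\R^d)$ should follow from the multidimensional criterion of Theorem \ref{thm: criterion} applied to suitable $L^p$-increment bounds for $v^n$; these bounds come from the Wiener isometry, the explicit expressions of $\F G_t$, and the equivalent characterization of Hypothesis \textbf{(H1)} provided by Lemma \ref{lem:1}, which yields estimates uniform in $n$. Since the candidate limit $v$ is Gaussian with continuous paths, once tightness is established the convergence in law reduces to the pointwise convergence of the covariances
\[
C_n(t,x;t',x'):=\int_0^{t\wedge t'}\!\!\int_{\R^d}\F G_{t-s}(x-\cdot)(\xi)\,\overline{\F G_{t'-s}(x'-\cdot)(\xi)}\,\mu_n(d\xi)\,ds,
\]
which is a direct consequence of Hypothesis \textbf{(H2)} applied to the integrand, whose absolute value is bounded by a constant multiple of $(1+|\xi|^2)^{-1}$ by Remark \ref{rmk: Dalang}.

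For the wave equation (case (a)) and for the heat equation with bounded drift (case (b)), the passage from the linear case to the full equation goes through the deterministic functional $F$ of \eqref{eq: 00}. Combining the Lipschitz property of $b$ with the two adapted Gronwall lemmas announced in the introduction, and with the mass identity $\int_{\R^d}G_t(dx)=t$ from \eqref{eq:56} in the wave case, I would first show that $F$ is well defined and continuous on $\C([0,T]\times\R^d)$. Since Lemma \ref{lem: dqs} guarantees that $I_0^d$ is continuous and bounded, we may write $u^n=F(I_0^d+v^n)$ and $u=F(I_0^d+v)$, so that the continuous mapping theorem combined with the first step immediately yields $u^n\to u$ in law in $\C([0,T]\times\R^d)$.

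The heat equation with a general Lipschitz drift (case (c)) is the main obstacle, since the deterministic equation \eqref{eq: 00} is not well posed on the whole of $\C([0,T]\times\R^d)$: the $L^\infty$ control on $b(z)$ needed in the Gronwall argument is missing. My plan here is a truncation scheme. For each $N\geq 1$, replace $b$ by a bounded Lipschitz $b_N$ coinciding with $b$ on $[-N,N]$, and let $u^{n,N}$, $u^N$ be the corresponding mild solutions. Case (b) gives $u^{n,N}\to u^N$ in law as $n\to\infty$. It then remains to establish (i) tightness of $\{u^n:n\geq 1\}$ directly in $\C([0,T]\times\R^d)$, for which the strengthened assumption $u_0\in \C^\alpha(\R^d)$ is precisely what is needed to control the temporal equicontinuity of the drift integral near $t=0$, and (ii) the uniform tail bound
\[
\lim_{N\to\infty}\sup_{n\geq 1}\mathbb{P}\Big(\sup_{(t,x)\in[0,T]\times K}|u^n(t,x)|>N\Big)=0
\]
on every compact $K\subset\R^d$, which follows from an $L^p$-estimate on $u^n$ uniform in $n$ via the standard Gronwall argument. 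Items (i) and (ii) permit to approximate the finite-dimensional distributions of $u^n$ by those of $u^{n,N}$ uniformly in $n$; letting first $n\to\infty$ via case (b) and then $N\to\infty$ identifies the finite-dimensional limits of $u^n$ as those of $u$, and combined with the tightness (i) this yields $u^n\Rightarrow u$ in $\C([0,T]\times\R^d)$.
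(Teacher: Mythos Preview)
Your treatment of the linear case and of cases (a) and (b) matches the paper exactly: tightness of $v^n$ via Hypothesis \textbf{(H1)} and Lemma \ref{lem:1}, identification of the limit via covariance convergence under \textbf{(H2)}, and then the representation $u^n=F(I_0^d+v^n)$ combined with the continuous mapping theorem.

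For case (c), your overall architecture --- direct tightness of $\{u^n\}$ plus a truncation $b_N$ to reduce to case (b) for the finite-dimensional distributions --- is also the paper's. The gap is in your step (ii). The tail bound
\[
\lim_{N\to\infty}\sup_{n\geq 1}\mathbb{P}\Big(\sup_{(t,x)\in[0,T]\times K}|u^n(t,x)|>N\Big)=0
\]
on compacts $K$ does \emph{not} let you compare $u^n$ with $u^{n,N}$: for the heat equation the drift integral $\int_0^t\int_{\R^d} G_{t-s}(x-y)b(u^n(s,y))\,dy\,ds$ involves $u^n(s,y)$ for \emph{all} $y\in\R^d$, so the event $\{\sup_K|u^n|\le N\}$ does not force $b(u^n)=b_N(u^n)$ inside the integral, and in general $u^n\neq u^{n,N}$ on that event. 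A tail bound on $\sup_{[0,T]\times\R^d}|u^n|$ would be needed instead, but that supremum is typically infinite almost surely (the stochastic convolution is spatially stationary).

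The paper closes this gap differently: instead of a pathwise localization, it proves the uniform $L^2$ approximation
\[
\sup_{n\geq 1}\sup_{(t,x)\in[0,T]\times\R^d}\E{|u^{n,N}(t,x)-u^n(t,x)|^2}\xrightarrow[N\to\infty]{}0,
\]
obtained by writing $b_N(u^{n,N})-b(u^n)=[b_N(u^{n,N})-b_N(u^n)]+[b_N(u^n)-b(u^n)]$, controlling the first bracket by the common Lipschitz constant, and the second by the uniform moment bound $\sup_n\sup_{(t,x)}\E{|u^n(t,x)|^p}<\infty$ for some $p>2$ (Lemma \ref{lem: unif-moment} and Gronwall). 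This pointwise $L^2$ estimate is exactly what is needed to pass to the limit in expectations of bounded Lipschitz test functions, and it bypasses any need for pathwise control on noncompact sets. Your ingredient ``an $L^p$-estimate on $u^n$ uniform in $n$'' is precisely the right input; you should use it to derive this $L^2$ convergence directly rather than the compact-set tail bound.
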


\begin{remark}\label{rmk:10}
	The proof of Theorem \ref{thm: existence} works for equation \eqref{eq: mild_u} as well. That is, for any $p\geq 1$, the latter equation admits a unique solution in the space of $L^2(\Omega)$-continuous and adapted processes such that
	\beq
	\sup_{(t,x)\in [0,T]\times \R^d}
	\E{|u(t,x)|^p}<\infty.	
	\label{eq:789}
	\eeq

\end{remark}


\subsection{Examples}
\label{sec: examples}

This section is devoted to present some examples of families of spectral measures $\{\mu_n,\, n\geq 1\}$ for which Theorem \ref{thm: main} holds. 

\subsubsection{Fractional noise with $d=1$}
\label{sec: frac1}

We prove that Hypotheses \textbf{(H1)} and \textbf{(H2)} are satisfied
in the case where our noise is fractional in space and $d=1$. More precisely,
assume that 
\[
\mu_{n}(d\xi)=C_{H_{n}}|\xi|^{1-2H_{n}}d\xi, \quad \xi\in \R,
\]
with $H_{n}\in(0,1)$ and 
\begin{equation}
C_{H}=\frac{\Gamma(2H+1)\sin(\pi H)}{2\pi},\;H\in(0,1).
\label{eq:14}
\end{equation}
We suppose that $H_{n}\to H_{0}\in(0,1)$, as $n$ tends to infinity.
Then, the measure $\mu$ will be given by 
\[
\mu(d\xi)=C_{H_{0}}|\xi|^{1-2H_{0}}\,d\xi.
\]
First, we will check \textbf{(H1)}. Define 
\[
\hmin:=\inf_{n\geq1}H_{n}\in(0,1)\quad\mbox{and}\quad\hmax:=\sup_{n\geq1}H_{n}\in(0,1).
\]
Take $q\in(2-2\hmin,2)$. It holds 
\[
\int_{\mathbb{R}}\frac{|\xi|^{1-2H_{n}}}{1+|\xi|^{q}}\,d\xi=\int_{\{|\xi|\le1\}}\frac{|\xi|^{1-2H_{n}}}{1+|\xi|^{q}}\,d\xi+\int_{\{|\xi|>1\}}\frac{|\xi|^{1-2H_{n}}}{1+|\xi|^{q}}\,d\xi=:I_{1}^{n}+I_{2}^{n}.
\]
We have that 
\[
I_{1}^{n}\le2\int_{0}^{1}\xi^{1-2H_{n}}\,d\xi=\frac{2}{2-2H_{n}}\le\frac{1}{1-\hmax}.
\]
On the other hand, 
\[
I_{2}^{n}\le\int_{\{|\xi|>1\}}\frac{\axi^{1-2\hmin}}{1+\axi^{q}}d\xi\le C\int_{\{|\xi|>1\}}\frac{1}{\axi^{q+2\hmin-1}}d\xi\le C,
\]
because $q+2\hmin-1>1$. The above  two inequalities and the fact that
the constants $C_{H_{n}}$ are bounded (since the function $\Gamma$
is continuous in $[1,3]$) prove that \textbf{(H1)} is satisfied.

Now, we will prove that \textbf{(H2)} is fulfilled. Suppose that $f$
is a continuous function satisfying (\ref{condAco}). Since the constant
$C_{H}$ is a continuous function of $H$, we must prove that 
\[
\lim_{n\to\infty}\int_{\R}f(\xi)\axi^{1-2H_{n}}\,d\xi=\int_{\R}f(\xi)\axi^{1-2H_{0}}\,d\xi.
\]
On the one hand, by the dominated convergence theorem, we have 
\[
\lim_{n\to\infty}\int_{\{|\xi|\le1\}}f(\xi)\axi^{1-2H_{n}}\,d\xi=\int_{\{|\xi|\le1\}}f(\xi)\axi^{1-2H_{0}}\,d\xi,
\]
because, when $\axi\le1$, 
\[
|f(\xi)|\,\axi^{1-2H_{n}}\le C\axi^{1-2\hmax}.
\]
On the other hand, again applying the dominated convergence theorem,
it holds 
\[
\lim_{n\to\infty}\int_{\{|\xi|>1\}}f(\xi)\axi^{1-2H_{n}}\,d\xi=\int_{\{|\xi|>1\}}f(\xi)\axi^{1-2H_{0}}\,d\xi,
\]
since in the case $\axi>1$ we have 
\[
|f(\xi)|\,\axi^{1-2H_{n}}\le C\axi^{-1-2\hmin},
\]
by using condition (\ref{condAco}). This concludes the proof. \qed

\subsubsection{The anisotropic fractional noise}
\label{sec: ani}

We consider a  Gaussian spatially homogeneous noise which is white in time and anisotropic fractional in space. This noise depends on a $d$-dimensional parameter $H=(H_1,\ldots,H_d)\in (0,1)^d$, and the corresponding spectral measure is given by
$$\mu_H(d\xi)=\prod_{j=1}^dC_{H_j} |\xi_j|^{1-2H_j}d\xi, \quad \xi\in \R^d.$$
Here,
$$C_{H_j}=\frac{\Gamma(2H_j+1)\sin(\pi H_j)}{2\pi}.$$
We note that $\mu_H$ is the spectral measure associated to the covariance of the anisotropic fractional Brownian sheet. 
We will see that, under certain hypotheses, if we have a sequence of parameters $\{H_n\}_{n\geq 1}$ satisfying $H_n\to H_0$, then the family  of measures $\{\mu_n\}_{n\geq 1}$ defined by $\mu_n:=\mu_{H_n}$   satisfies hypotheses {\bf(H1)} and {\bf (H2)} with $\mu=\mu_{H_0}$. The needed conditions is essentially the same as that imposed to ensure that $\mu_n$ satisfies Dalang's condition.
We assume that $d\ge 2$, since the case $d=1$ has already been treated in Section \ref{sec: frac1}.

Let $H=(H_1,\ldots, H_d)\in (0,1)^d$. We first check under which hypotheses Dalang's condition is satisfied for $\mu_H$, that is:
\begin{equation}\label{DalangFracSheet}
	\int_{\mathbb R^d}\frac{\prod_{j=1}^d |\xi_j|^{1-2H_j}}{1+|\xi|^2}\,d\xi<\infty.
\end{equation}
We consider the following $d$-dimensional spherical coordinates:
\begin{align*}
	&\xi_1=r\,\sin\theta_1\,\sin\theta_2\cdots \sin\theta_{d-1}\\
	&\xi_2=r\,\sin\theta_1\,\sin\theta_2\cdots \sin\theta_{d-2}\cos\theta_{d-1}\\
	&\xi_3=r\,\sin\theta_1\,\sin\theta_2\cdots \sin\theta_{d-3}\cos\theta_{d-2}\\
	&\phantom{xxxxxxx}\vdots\\
	&\xi_d=r\cos\theta_1,		
\end{align*}
with $\theta_j\in (0,\pi)$, for $j=1,\ldots,d-2$ and $\theta_{d-1}\in (0,2\pi)$.  The Jacobian of the underlying change of variables is given by
$$J(r,\theta_1,\dots,\theta_{d-1})=r^{d-1}\sin^{d-2}\theta_1\,\sin^{d-3}\theta_2\cdots\sin\theta_{d-2}.   $$
Performing the change of variables, the integral of (\ref{DalangFracSheet}) becomes
$$\int_0^\infty \int_{(0,\pi)^{d-2}\times(0,2\pi)}\dfrac{r^{d-2\sum_{j=1}^d  H_j}\,r^{d-1}}{1+r^2}\,f(\theta_1\ldots\theta_{d-1})\,d\theta_1\cdots d\theta_{d-1}dr,
$$
where
\begin{align*}
	f(\theta_1,\ldots,\theta_{d-1})=&|\sin\theta_1|^{d-2+\sum_{j=1}^{d-1}(1-2H_j)}\,|\sin\theta_2|^{d-3+\sum_{j=1}^{d-2}(1-2H_j)}
	\times\cdots \times|\sin\theta_{d-2}|^{1+\sum_{j=1}^{2}(1-2H_j)}\\
	& \qquad \times|\cos\theta_{d-1}|^{1-2H_2}\,|\cos\theta_{d-2}|^{1-2H_3}\cdots |\cos\theta_{1}|^{1-2H_d}.
\end{align*}
This function is integrable because all the exponents of the trigonometrical functions are  greater than $-1$.
On the other hand, in order that the integral with respect to $r$ is finite, we need that
$$d-2\sum_{j=1}^d  H_j+d-1>-1,$$
which is satisfied, and that
$$d-2\sum_{j=1}^d H_j+d-1-2<-1.$$
The latter condition is satisfied if and only if
\begin{equation*}
	\sum_{j=1}^d  H_j>d-1.
\end{equation*}
At this point, we go back to the sequence of spectral measures given by
\beq
\mu_n(d\xi)=\prod_{j=1}^d C_{H_j^n} |\xi_j|^{1-2H_j^n}d\xi, \quad \xi\in \R^d.
\label{eq:347}
\eeq
We assume that the sequence of parameters $\{H_n=(H_1^n,\ldots,H_d^n)\}_{n\geq 1}$ satisfies the following:
\begin{enumerate}[(i)]
	\item For all $n\ge 1$,
	$$	\sum_{j=1}^d H^n_j>d-1.$$
	\item It holds
	$$\lim_{n\to\infty} H_n = H_0=(H_1^0,\ldots,H_d^0) \quad \mbox{and} \quad \sum_{j=1}^d H_j^0>d-1.$$
\end{enumerate}	
We show that, under conditions (i) and (ii) above, hypotheses {\bf (H1)} and {\bf (H2)} are satisfied. 

We start with hypothesis {\bf (H1)}.
Set $A=\{(x_1,\ldots,x_d)\in  (0,1)^d,\sum_{j=1}^d x_j >d-1\}$. Since $H_n\in A$ for all $n\ge 1$,  $H_0\in A$ and $H_n\to H_0$, we have that
$$L:=\inf_{n\geq 1}\sum_{j=1}^dH_j^n>d-1,$$
and 
$$U:=\sup_{n\geq 1}\sup_{j=1,\dots,d}H_j^n<1.$$
Observe that $0<2d-2L<2$, so we will prove that hypothesis {\bf (H1)} is satisfied taking $q\in(2d-2L,\,2)$. That is, we will check that
$$\sup_{n\geq 1}\int_{\mathbb R^d}\frac{\mu_n(d\xi)}{1+|\xi|^q}<\infty.$$
First, note that the product of constants $\prod_{j=1}^dC_{H_j^n}$  is bounded because the function $\Gamma$ is continuous on the interval $[1,3]$. Thus, we must study the term:
$$\sup_{n\geq 1}\int_{\mathbb R^d}\prod_{j=1}^d|\xi_j|^{1-2H_j^n}\frac{d\xi}{1+|\xi|^q}.$$
Performing the change of variables to spherical coordinates, the last quantity equals to
\begin{align}
	& \sup_{n\geq 1}\int_0^\infty \int_{(0,\pi)^{d-2}\times (0,2\pi)} \left(\prod_{j=1}^{d-1}|\cos\theta_{d-j}|^{1-2H_j^n}\right)
	\left(
	\prod_{j=1}^{d-2}|\sin\theta_{d-j-1}|^{j+\sum_{k=1}^{j+1}(1-2H_k^n)}\right) \nonumber \\ 
	& \qquad \qquad \times r^{d-1+\sum_{j=1}^d (1-2H_j^n)}\,\frac1{1+r^q}\,d\theta_1\cdots d\theta_{d-1}dr.
	\label{eq:374}
\end{align}
We can bound the trigonometrical part of the above integral in the following way:
\begin{align*}
	& \left(\prod_{j=1}^{d-1}|\cos\theta_{d-j}|^{1-2H_j^n}\right)
	\left( \prod_{j=1}^{d-2}|\sin\theta_{d-j-1}|^{j+\sum_{k=1}^{j+1}(1-2H_k^n)}\right) \\ 
	& \qquad \qquad \le \left(\prod_{j=1}^{d-1}|\cos\theta_{d-j}|^{1-2U}\right) \left(\prod_{j=1}^{d-2}|\sin\theta_{d-j-1}|^{j+(j+1)(1-2U)}\right).
\end{align*}
Due to the fact that $1-2U>-1$ and $j+(j+1)(1-2U)>-1$, for all $j=1,\dots,d-2$, the integral of this part in \eqref{eq:374} is bounded, independently of $n$.
Now, we consider the integral in \eqref{eq:374} corresponding to the radial part:
\begin{align*}
	\int_0^{\infty} r^{2d-2\sum_{j=1}^d H_j^n-1}\,\frac1{1+r^q}dr 
& \le\int_0^1 r^{2d-2\sum_{j=1}^d H_j^n-1}dr+\int_1^\infty r^{2d-2\sum_{j=1}^d H_j^n-1-q}dr\\
& =:I_1^n\,+\,I_2^n.
\end{align*} 
We have that
$$\sup_{n\geq 1} I_1^n\le\int_0^1 r^{2d-2dU-1}\,dr<\infty,$$
because $2d-2U-1>-1$,
and 
$$\sup_{n\geq 1} I_2^n\le\int_0^\infty r^{2d-2L-1-q}\, dr<\infty,$$
since $2d-2L-1-q<-1$. This concludes that $\{\mu_n\}_{n\geq 1}$ given by \eqref{eq:347} satisfies {\bf (H1)}.

Next, we check that hypothesis {\bf (H2)} is fulfilled.
Let
$$\mu(d\xi)=
\prod_{j=1}^d C_{H_j^0}  |\xi_j|^{1-2H_j^0} d\xi.$$
We must see that, for any continuous function $f:\R^d\to\R$ satisfying
$$|f(\xi)|\le \frac{C}{1+|\xi|^2},$$
we have 
\begin{equation}\label{converg}
	\lim_{n\to\infty} \prod_{j=1}^d C_{H_j^n}\int_{\mathbb R^d}f(\xi)|\xi_1|^{1-2H_1^n}\cdots|\xi_d|^{1-2H_d^n} d\xi = 
	\prod_{j=1}^d C_{H_j^0}\int_{\mathbb R^d}f(\xi)|\xi_1|^{1-2H_1^0}\cdots|\xi_d|^{1-2H_d^0}d\xi.
\end{equation}
Due to the continuity of $C_{H_j}$ with respect to the parameter $H_j$, we have the convergence of the above product of constants.
On the other hand, by the dominated convergence theorem,
$$
\lim_{n\to\infty} \int_{\{|\xi|\le 1\}}f(\xi)|\xi_1|^{1-2H_1^n}\cdots |\xi_d|^{1-2H_d^n} d\xi = \int_{\{|\xi|\le 1\}}f(\xi)|\xi_1|^{1-2H_1^0}\cdots |\xi_d|^{1-2H_d^0} d\xi.$$ 
Indeed, in the domain $\{|\xi|\le 1\}$ it holds that $|\xi_j|\le 1$, for any $j=1,\ldots,d$, and therefore
$$|f(\xi)|\,|\xi_1|^{1-2H_1^n}\cdots |\xi_d|^{1-2H_d^n}\le |f(\xi)|\prod_{j=1}^d|\xi_j|^{1-2U},$$
which is an integrable function on $[-1,1]^d$ and, thus, on $\{\xi\in \R^d, |\xi|\le 1\}$ as well. 
Finally, by passing to spherical coordinates, we can write
\begin{align*}
	& \int_{\{|\xi|> 1\}}f(\xi) |\xi_1|^{1-2H_1^n}\cdots |\xi_d|^{1-2H_d^n}d\xi \\
	& \qquad =
	\int_1^\infty \int_{(0,\pi)^{d-2}\times (0,2\pi)} \left( \prod_{j=1}^{d-1}|\cos\theta_{d-j}|^{1-2H_j^n}\right)\left(
	\prod_{j=1}^{d-2}|\sin\theta_{d-j-1}|^{j+\sum_{k=1}^{j+1}(1-2H_k^n)}\right) \\
	& \qquad \qquad 
	\times g(\theta_1,\ldots,\theta_{d-1},r) \,r^{d-1+\sum_{j=1}^d (1-2H_j^n)} d\theta_1\cdots d\theta_{d-1}dr,
\end{align*}
where $g$ is the function $f$ expressed in terms of the spherical coordinates.
We can also apply the dominated convergence theorem and obtain that the last integral converges, as $n\to\infty$, to the same expression but replacing $H_j^n$ by $H_0^n$. In fact, it holds that
\begin{align*}
	& \left( \prod_{j=1}^{d-1}|\cos\theta_{d-j}|^{1-2H_j^n}\right)
	\left(
	\prod_{j=1}^{d-2}|\sin\theta_{d-j-1}|^{j+\sum_{k=1}^{j+1}(1-2H_k^n)}\right) 
	 |g(\theta_1,\ldots,\theta_{d-1},r)|\,r^{d-1+\sum_{j=1}^d (1-2H_j^n)}\\
	 & \qquad \le C \left(\prod_{j=1}^{d-1}|\cos\theta_{d-j}|^{1-2U}\right)
	 \left(
	\prod_{j=1}^{d-2}|\sin\theta_{d-j-1}|^{j+(j+1)(1-2U)}\right)
	\frac{r^{2d-2L-1}}{1+r^2}\\ 
	& \qquad \le C \left( \prod_{j=1}^{d-1}|\cos\theta_{d-j}|^{1-2U}\right) 
	\left(
	\prod_{j=1}^{d-2}|\sin\theta_{d-j-1}|^{j+(j+1)(1-2U)}\right) r^{2d-2L-1-q}.
\end{align*}
As we have seen before, the latter expression defines an integrable function. This concludes that {\bf (H2)} is satisfied.


\subsubsection{The isotropic fractional noise}
\label{sec: iso}

We now consider a Gaussian spatially homogeneous noise which is white in time and isotropic fractional in space. That is, it is the noise associated to a centered Gaussian random field $\{X^H(t,x),\, (t,x)\in \R_+\times \R^d\}$ with covariance function given by
$$\E{X^H(s,x)X^H(t,y)}=\frac{\sigma_0^2}2\min(s,t)(|x|^{2H}+|y|^{2H}-|x-y|^{2H}),$$
where $H\in (0,1)$ and $\sigma_0^2$ is some positive constant.
A centered Gaussian random field $\{Y^H(x),\, x\in\R^d\}$ with covariance function given by
$$\E{Y^H(x)Y^H(y)}=\frac{\sigma^2_0}2(|x|^{2H}+|y|^{2H}-|x-y|^{2H})$$
is called   {\it isotropic fractional Brownian sheet} or also {\it Lévy fractional Brownian sheet}. It is the only (modulo multiplicative constants) $H$-self-similar random field with stationary increments in the strong sense, that is
$$\{Y^H(g(x))-Y^H(g(0)),\,x\in\mathbb R^d\}\overset{\mathcal{L}}{=}\{Y^H(x)-Y^H(0),\,x\in\mathbb R^d\},$$
for any Euclidian rigid body motions $g$, which form a group and are defined as compositions of rotations and translations (see, for instance, \cite[Sec. 7.2 and 8.1]{Samorod}).
It can be proved that, up to a multiplicative constant, the Lévy fractional Brownian sheet has the following spectral representation in law:
\beq
Y^H_x \stackrel{\mathcal{L}}{=}  \int_{\mathbb R^d}\frac{e^{i <x,\xi>}-1}{|\xi|^{H+\frac{d}2}}\,\hat{W}(d\xi), \quad x\in \R^d,
\label{eq:564}
\eeq
where $\hat{W}$ is a complex Brownian measure on $\mathbb R^d$. In fact, it is easily seen that the right hand-side above is self-similar of index $H$ and has stationary increments in the strong sense.
From the spectral representation \eqref{eq:564}, we can compute the underlying spectral measure. First, we have that, for any rectangle $(x,x']\subset \R^d$, with $x,x'\in \R^d$,
$$\mathcal F({\bf 1}_{(x,x']})(y) =(-i)^d\prod_{k=1}^d (y_k^{-1})\,\Delta_{(x,x']}e^{i<\cdot,y>},$$
where $\Delta_{(x,x']}f(\cdot)$ denotes the rectangular increment of the function $f:\R^d\to\R$ on $(x,x']$.
Hence, by \eqref{eq:564},
$$\Delta_{(x,x']}Y^H=\int_{\R^d}\mathcal F({\bf 1}_{(x,x']})(\xi)\,i^d \frac{\prod_{k=1}^d \xi_k}{|\xi|^{H+\frac{d}2}}\hat{W}(d\xi).$$
Making an abuse of notation, we set $Y^H({\bf 1}_{(x,x']}):=\Delta_{(x,x']}Y^H$ and extend this definition by linearity  to any elementary function $\phi$ (finite linear combinations of indicator functions of rectangles):
$$Y^H(\phi)=\int_{\R^d}\mathcal F(\phi)(\xi)\,i^d  \frac{\prod_{k=1}^d \xi_k}{|\xi|^{H+\frac{d}2}}\,\hat{W}(d\xi).$$
Computing the covariance functional of the map $Y^H$, we obtain that its associated spectral measure is given by
$$\mu^H(d\xi)=\frac{\prod_{k=1}^d\xi_k^2}{|\xi|^{2H+d}} d\xi,\quad \xi\in \R^d.$$
By using a change of variables with spherical coordinates, it can be checked that the measure $\mu^H$  does not satisfy Dalang's condition unless $d=1$, which corresponds to the fractional  noise studied in Section \ref{sec: frac1}.

\subsubsection{Riesz kernel}
\label{sec: riesz}

For any $\alpha\in (0,d)$ set $f_\alpha(x)=|x|^{-\alpha}$, which is called the Riesz kernel of order $\alpha$. We have that this function defines a covariance functional given by
$$\int_0^\infty\int_{\R^d} \int_{\R^d} \varphi(t,x)f_\alpha(x-y)\psi(t,y)dxdydt,$$ for any $\varphi,\psi\in\mathcal D=\mathcal{C}_{0}^{\infty}([0,\infty)\times\mathbb{R}^{d})$.
It is well-known that the above functional can be expressed in the form
$$\int_0^\infty\int_{\R^d}\mathcal F\varphi(t,\cdot)(\xi) \overline{\mathcal F\psi(t,\cdot)(\xi)}\mu_\alpha(d\xi)dt,
$$
where
$$\mu_\alpha(d\xi)=c_\alpha f_{d-\alpha}(\xi)d\xi=c_\alpha |\xi|^{\alpha-d}d\xi
$$
and the constant $c_\alpha$ is given by 
$$c_\alpha=\frac{\Gamma(\frac{d-\alpha}2)}{ 2^\alpha\pi^{d/2}\Gamma(\frac{d}2)}.$$

When $d=1$ the Riesz kernel is, modulo a multiplicative constant, a particular case of the fractional noise presented in Section \ref{sec: frac1}. More precisely, it corresponds to a fractional noise with $H=1-\frac{\alpha}{2}\in (\frac12,1)$. Note that the fractional noise can be also considered for $H\in (0,\frac12]$, and in this case the Riesz kernel would not be given by a function but a genuine distribution (see Section \ref{sec: frac1}).

We will now deal the with the case $d\ge 2$. It is readily checked that, to ensure that $\mu_\alpha$ satisfies Dalang's condition, we must have that $\alpha<2$.
Consider a sequence $\{\alpha_n\}_{n\geq 1}$ such that $\alpha_n\in (0, 2)$, for all $n\geq 1$, and satisfying $\alpha_n\rightarrow \alpha_0$, as $n\to\infty$, for some $\alpha_0\in (0,2)$. 
Then, taking $\mu_n:=\mu_{\alpha_n}$ and $\mu:=\mu_{\alpha_0}$, hypotheses {\bf (H1)} and {\bf (H2)} are satisfied taking $q\in(\sup_{n\geq 1} \alpha_n, 2)$. The proof follows easily by using that the constant $c_\alpha$ defines a continuous function of $\alpha$ and  that
$$\inf_{n\geq 1} \alpha_n>0\qquad\text{and}\qquad \sup_{n\geq 1}\alpha_n<2.$$


\section{Weak convergence for the linear case}
\label{sec: conv-linear}

In this section, we consider equations \eqref{eq: heatn} and \eqref{eq: waven}
in the case where the drift term $b$ and the initial data vanish.
This implies that the solution of these equations is explicitly given
by 
\beq
v^{n}(t,x):=\int_{0}^{t}\int_{\R^d}G_{t-s}(x-y)W^{n}(ds,dy),\quad(t,x)\in(0,T]\times\R^d,
\label{eq: stochconv}
\eeq
where we recall that $G$ is the fundamental solution of the heat (respectively wave)
equation on $\R^d$ (see \eqref{eq: heat kernel}-\eqref{eq: 3d}). Note that $v^{n}$ defines a mean-zero Gaussian
process such that 
\[
\E{|v^{n}(t,x)|^{2}}=\int_{0}^{t}\int_{\R^d}|\F G_{t-s}(x-\cdot)(\xi)|^{2}\mu_{n}(d\xi)ds=\int_{0}^{t}\int_{\R^d}|\F G_s(\xi)|^{2}\mu_{n}(d\xi)ds,
\]
where we have used that $\F G_{t-s}(x-\cdot)(\xi)=\F G_{t-s}(\cdot-x)(-\xi)=e^{-i<x,\xi>}\overline {\F G_{t-s}(\xi)}$. Moreover, we have the following uniform estimate for the moments of $v^n$:

\begin{lemma}\label{lem: unif-moment}
Assume that Hypothesis {\bf (H1)} is satisfied. Then, for all $p\geq 1$, 
\[
\sup_{n\geq 1}\sup_{(t,x)\in [0,T]\times \R^d} \E{|v^{n}(t,x)|^p}<\infty.
\]
\end{lemma}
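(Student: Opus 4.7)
The plan is to exploit the Gaussian nature of $v^n(t,x)$ in order to reduce the claim to a bound on the second moment, and then to estimate that second moment via Fourier analysis together with the uniform Dalang-type condition that follows from \textbf{(H1)} (see Remark~\ref{rmk: Dalang}).

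First, since $v^n(t,x)$ is a centered Gaussian random variable, for every $p\geq 1$ there exists a universal constant $c_p>0$ with
\[
\E{|v^n(t,x)|^p} = c_p \bigl(\E{|v^n(t,x)|^2}\bigr)^{p/2}.
\]
Therefore it suffices to show that $\sup_{n\geq 1}\sup_{(t,x)\in[0,T]\times\R^d}\E{|v^n(t,x)|^2}<\infty$. The identity displayed just before the statement shows that this second moment is independent of $x$ and equals $\int_0^t\int_{\R^d}|\F G_s(\xi)|^2\mu_n(d\xi)\,ds$, so it is enough to bound $\int_0^T\int_{\R^d}|\F G_s(\xi)|^2\mu_n(d\xi)\,ds$ uniformly in $n$.

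Next, I would perform the $s$-integration first and verify, in both the heat and wave cases, an inequality of the form
\[
\int_0^T |\F G_s(\xi)|^2\, ds \leq \frac{C_T}{1+|\xi|^2}, \qquad \xi\in\R^d,
\]
for some finite constant $C_T>0$. For the heat equation, $\F G_s(\xi)=e^{-s|\xi|^2/2}$, so the time integral equals $(1-e^{-T|\xi|^2})/|\xi|^2$, which is bounded by $\min(T,|\xi|^{-2})$ and hence by $C_T/(1+|\xi|^2)$. For the wave equation with $d\in\{1,2,3\}$, one has $\F G_s(\xi)=\sin(s|\xi|)/|\xi|$, so the time integral is bounded by $\min(T^3/3,\,T/|\xi|^2)$, which again yields a bound of the form $C_T/(1+|\xi|^2)$.

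Combining this pointwise estimate with Remark~\ref{rmk: Dalang}, which ensures that $\sup_{n\geq 1}\int_{\R^d}\mu_n(d\xi)/(1+|\xi|^2)<\infty$ as a consequence of \textbf{(H1)}, I obtain
\[
\sup_{n\geq 1}\int_0^T\int_{\R^d}|\F G_s(\xi)|^2\mu_n(d\xi)\,ds \leq C_T\sup_{n\geq 1}\int_{\R^d}\frac{\mu_n(d\xi)}{1+|\xi|^2} <\infty,
\]
completing the proof. No step looks to be a genuine obstacle: the argument is routine once Gaussian equivalence of moments and the uniform Dalang condition are in place. The only mildly delicate point is the wave-equation estimate in dimensions $2$ and $3$, where one has to recall that the Fourier transform of the fundamental solution is still $\sin(s|\xi|)/|\xi|$ (the $3$-dimensional case being understood as the Fourier transform of the surface measure $G_t(dx)$), and that this yields the same $C_T/(1+|\xi|^2)$ bound as in one dimension.
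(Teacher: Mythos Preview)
Your proof is correct and follows essentially the same approach as the paper: reduce to the second moment via Gaussianity, bound the time integral of $|\F G_s(\xi)|^2$ by $C_T/(1+|\xi|^2)$, and conclude from the uniform Dalang condition implied by \textbf{(H1)}. The only cosmetic differences are that the paper cites Examples~6 and~8 of \cite{DalangEJP} for the time-integral bound rather than computing it directly, and makes the uniformity in $n$ explicit via the elementary inequality $(1+|\xi|^2)^{-1}\leq C(1+|\xi|^q)^{-1}$ together with \textbf{(H1)} (note that Remark~\ref{rmk: Dalang} as stated only asserts finiteness for each $n$, not of the supremum, so your citation is slightly imprecise even though the claim is immediate).
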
  

\begin{proof}
	Let $(t,x)\in [0,T]\times \R^d$. Owing to Examples 6 and 8 in \cite{DalangEJP}, and taking into account that the parameter $q$ of Hypothesis {\bf (H1)} satisfies $q\in (0,2)$, we have
\begin{align*}
	\E{\left|\int_0^t\int_{\R^d} G_{t-s}(x-y)W^n(ds,dy)\right|^p} & 
	= C  \left(\int_0^t\int_{\R^d} |\mathcal{F}G_{t-s}(\xi)|^2 \mu_n(d\xi) ds\right)^{\frac p2}\\
	& = C \left(\int_0^t\int_{\R^d} |\mathcal{F}G_s(\xi)|^2 \mu_n(d\xi) ds\right)^{\frac p2}\\
	& \leq C \left(\int_{\R^d} \frac{\mu_n(d\xi)}{1+|\xi|^2} \right)^{\frac p2} \\
	& \leq C \left(\int_{\R^d} \frac{\mu_n(d\xi)}{1+|\xi|^q} \right)^{\frac p2} \\
	& \leq C \left(\sup_{n\geq 1} \int_{\R^d} \frac{\mu_n(d\xi)}{1+|\xi|^q} \right)^{\frac p2}.
\end{align*}
The above supremum is finite, by Hypothesis {\bf (H1)}, which concludes the proof. 
\end{proof}

This section is devoted to prove the following result, which corresponds to Theorem \ref{thm: main} for the linear case. 

\begin{theorem}\label{thm: linear}
	Let $v^n$ be the random field defined by \eqref{eq: stochconv}, where $G$ is the fundamental solution of the wave equation (respect. heat equation). Assume that Hypotheses {\bf (H1)} and {\bf (H2)} hold. Then, as $n\to\infty$, $v^n$ converges in law, in the space $\C([0,T]\times \R^d)$, to the random field
	\beq
	v(t,x)=\int_0^t\int_{\R^d} G_{t-s}(x-y)W(ds,dy), \quad (t,x)\in [0,T]\times \R^d,
	\label{eq:822}
	\eeq
	where $W$ is a Gaussian spatially homogeneous noise with spectral measure $\mu$ (defined in Hypothesis {\bf (H2)}).
\end{theorem}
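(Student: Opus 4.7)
The plan is to follow the standard two-step route for weak convergence in $\C([0,T]\times \R^d)$: first establish tightness of the laws of $\{v^n,\, n\geq 1\}$, and then identify the limit by convergence of finite dimensional distributions. Since both $v^n$ and the candidate limit $v$ are centered Gaussian random fields, the finite dimensional distributions are determined by the covariance function, so it suffices to prove pointwise convergence of covariances.

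For tightness, I would invoke the multidimensional criterion recorded as Theorem \ref{thm: criterion}. It demands a uniform $p$-th moment bound at a base point (trivial since $v^n(0,\cdot)\equiv 0$, and in any case a consequence of Lemma \ref{lem: unif-moment}) together with a Kolmogorov-type estimate of the form $\sup_{n\geq 1}\E{|v^n(t,x)-v^n(t',x')|^{p}}\le C(|t-t'|^{\alpha_1}+|x-x'|^{\alpha_2})^{p}$ over any compact set, for $p$ large and suitable positive $\alpha_1,\alpha_2$. As the increment is centered Gaussian, by hypercontractivity the $p$-th moment is controlled by the $p$-th power of the $L^2$-norm, so the task reduces to bounding the variance of the increments. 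This variance is an integral against $\mu_n$ of $|\F G_{t-r}(\xi)-\F G_{t'-r}(\xi)|^2$ (plus an analogous spatial term with factor $|e^{-i\langle x-x',\xi\rangle}-1|^2$), and the uniform control follows from the integrability $\sup_n \int \mu_n(d\xi)/(1+|\xi|^q)<\infty$ in Hypothesis \textbf{(H1)} after splitting the $\xi$-integral on $\{|\xi|\le 1/|x-x'|\}$ and its complement (and analogously in time). Remark \ref{rmk:5} permits choosing $q$ as close to $2$ as needed to obtain Hölder exponents consistent with the Kolmogorov condition, particularly for $d=2,3$.

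For the identification of the limit, after Fubini the covariance can be written as
\[
\E{v^n(t,x)\,v^n(s,y)} = \int_{\R^d} h_{t,s,x-y}(\xi)\, \mu_n(d\xi),\quad h_{t,s,z}(\xi):=\int_0^{t\wedge s} e^{-i\langle z,\xi\rangle}\F G_{t-r}(\xi)\,\overline{\F G_{s-r}(\xi)}\, dr,
\]
and analogously for $v$ with $\mu$ in place of $\mu_n$. Hypothesis \textbf{(H2)}, applied separately to the real and imaginary parts of $h_{t,s,x-y}$, yields the desired convergence, provided $h_{t,s,z}$ is continuous in $\xi$ and satisfies $|h_{t,s,z}(\xi)|\le C/(1+|\xi|^2)$. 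In the heat case $\F G_t(\xi)=e^{-t|\xi|^2/2}$, and $\int_0^{t\wedge s} e^{-(t+s-2r)|\xi|^2/2}\, dr\le C/(1+|\xi|^2)$; in the wave case $\F G_t(\xi)=\sin(t|\xi|)/|\xi|$, so the integrand is bounded by $1/|\xi|^2$ on $\{|\xi|\ge 1\}$ while the Taylor expansion near the origin yields boundedness, again giving the required bound. Continuity of $h_{t,s,z}$ is clear from the explicit formulas.

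The main obstacle I foresee is the tightness bound for the wave equation in $d=2,3$, where $G$ is either singular or a genuine measure. Even though the Fourier transform is always $\sin(t|\xi|)/|\xi|$, the estimates on temporal increments require careful trigonometric manipulations (rewriting $\sin((t-r)|\xi|)-\sin((t'-r)|\xi|)$ via product-to-sum formulas) and one must isolate the singularities near $\xi=0$ from the tail behavior that is controlled by Hypothesis \textbf{(H1)}. The spatial increments are comparatively routine via $|e^{-i\langle x-x',\xi\rangle}-1|^2\le 2\min(1,|x-x'|^2|\xi|^2)$ and interpolation against $(1+|\xi|^q)^{-1}$.
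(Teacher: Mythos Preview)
Your proposal is correct and follows essentially the same route as the paper: tightness via the Kolmogorov-type criterion of Theorem \ref{thm: criterion} using uniform second-moment increment bounds derived from \textbf{(H1)} (the paper's Proposition \ref{prop:tightness}), and identification of the limit via convergence of covariances using \textbf{(H2)} (the paper's Proposition \ref{prop:cov}), exploiting Gaussianity in both steps. The only detail you leave implicit is that $v$ itself has a continuous modification, which the paper records separately as Proposition \ref{prop: vhol}; this follows from the same increment estimates applied to $\mu$ via Lemma \ref{lem: mu}.
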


\begin{proof}
First, we check that the family of laws of $\{v^n,\, n\geq 1\}$ is tight in the space $\mathcal{C}([0,T]\times \R^d)$. This is shown in Proposition \ref{prop:tightness}, from which we also deduce that $v^n$ has a version with continuous paths, for all $n\geq 1$. Secondly, 
as a consequence of Proposition \ref{prop: vhol}, we have that $v$ is a well-defined random variable taking values in $\C([0,T]\times \R^d)$. Finally,   
we identify the limit law by proving that the finite-dimensional distributions of $v^n$ converge to those of $v$, as $n\to\infty$. This is an immediate consequence of Proposition \ref{prop:cov},
where we show that the covariance function of $v^n$ converges to that of $v$, taking into account that both $v^n$ and $v$ are centered Gaussian random fields. 
\end{proof}
 

\subsection{Tightness}
\label{sec: tightness}

In this section, we aim to prove the following result:

\begin{proposition}\label{prop:tightness} 
	Let $v^n$ be the random field defined by \eqref{eq: stochconv}, where $G$ is the fundamental solution of the wave equation (respect. heat equation).
Assume that hypothesis \textbf{(H1)} holds true. Then, the following are satisfied:
\begin{itemize}
	\item[(a)] For any compact $K\subset \R^d$, there is a constant $C>0$ such that, for all $x,z\in K$,
\beq
\sup_{n\geq1} \sup_{t\in[0,T]} \E{|v^{n}(t,x)-v^{n}(t,z)|^{2}}\leq C|x-z|^{2-q}.
\label{eq:738}
\eeq

\item[(b)] There exists a constant $C>0$ such that, for any $s,t \in [0,T]$,
\beq
\sup_{n\geq1} \sup_{x\in \R^d} \E{|v^{n}(t,x)-v^{n}(s,x)|^{2}}
\leq
\left\{
\begin{array}{cc}
	|t-s|^{2-q}, \quad \text{wave equation},\\ [0.1cm]
	|t-s|^{1-\frac{q}{2}}, \quad \text{heat equation}.
\end{array}
\right.
\label{eq:739}
\eeq
\end{itemize}
Moreover, the laws of $\{v^n,\,n\geq1\}$
form a tight family in the space $\mathcal{C}([0,T]\times\R^d)$. 
\end{proposition}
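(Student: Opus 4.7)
The plan is to first establish the second-moment estimates (a) and (b) via a Fourier-analytic computation based on the Wiener isometry, then upgrade them to $L^p$-estimates using the Gaussianity of $v^n$, and finally invoke the multidimensional Kolmogorov-type tightness criterion of Theorem \ref{thm: criterion} together with the uniform moment bound of Lemma \ref{lem: unif-moment}. For part (a), the starting point is the identity
\[
\E{|v^n(t,x) - v^n(t,z)|^2} = 2\int_0^t\!\!\int_{\R^d}\bigl(1-\cos\langle x-z,\xi\rangle\bigr)\,|\mathcal{F}G_{t-s}(\xi)|^2\,\mu_n(d\xi)\,ds,
\]
derived from $\mathcal{F}G_{t-s}(x-\cdot)(\xi) = e^{-i\langle x,\xi\rangle}\overline{\mathcal{F}G_{t-s}(\xi)}$. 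Setting $h=|x-z|$ (which we may assume $\leq 1$), the $\xi$-integral is split at $|\xi|=1/h$: on the low-frequency region I would use $1-\cos(u)\leq u^2/2$ to extract a factor $h^2|\xi|^2$, and combine this with the explicit bounds $|\mathcal{F}G_s(\xi)|^2 \leq \min(s^2,|\xi|^{-2})$ (wave) or $|\mathcal{F}G_s(\xi)|^2 = e^{-s|\xi|^2}$ (heat), reducing that piece to a multiple of $h^2\mu_n(B_{1/h})$, bounded by $Ch^{2-q}$ through Lemma \ref{lem:1}(a). On the high-frequency region $1-\cos\leq 2$ leads to $\int_{|\xi|>1/h}|\xi|^{-2}\mu_n(d\xi) \leq h^{2-q}\int_{|\xi|>1/h}|\xi|^{-q}\mu_n(d\xi) \leq C h^{2-q}$ by Lemma \ref{lem:1}(b). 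This yields \eqref{eq:738}.

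For part (b), with $0\leq s<t\leq T$, the natural decomposition is
\[
v^n(t,x)-v^n(s,x) = \int_0^s\!\!\int_{\R^d}(G_{t-r}-G_{s-r})(x-y)\,W^n(dr,dy) + \int_s^t\!\!\int_{\R^d}G_{t-r}(x-y)\,W^n(dr,dy),
\]
so that the Wiener isometry gives the second moment as a sum of two $\xi$-integrals. For the ``smoothing'' piece, the elementary inequality $|\sin a-\sin b|\leq 2|\sin((a-b)/2)|$ gives, for the wave equation,
\[
|\mathcal{F}G_{t-r}(\xi)-\mathcal{F}G_{s-r}(\xi)|^2 \leq \min\bigl((t-s)^2,\,4/|\xi|^2\bigr),
\]
while for the heat equation $|e^{-a}-e^{-b}|\leq |a-b|\wedge 1$ yields $|\mathcal{F}G_{t-r}(\xi)-\mathcal{F}G_{s-r}(\xi)|^2 \leq e^{-(s-r)|\xi|^2}\min(1,(t-s)^2|\xi|^4/4)$. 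Splitting the $\xi$-integral at $|\xi|=1/(t-s)$ (wave) or $|\xi|=1/\sqrt{t-s}$ (heat) and applying Lemma \ref{lem:1} exactly as in (a) produces the advertised exponents $2-q$ and $1-q/2$ respectively. The ``fresh'' piece on $[s,t]$ is handled by direct computation of $\int_0^{t-s}\!\!\int|\mathcal{F}G_u(\xi)|^2\,\mu_n(d\xi)\,du$ with the same splitting; the resulting contributions carry one extra power of $t-s\leq T$ and are therefore of order no worse than the leading terms, so \eqref{eq:739} follows.

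Since $v^n$ is centered Gaussian, parts (a) and (b) self-promote to $\E{|v^n(t,x)-v^n(s,y)|^p} \leq C_p\bigl(|x-y|^{p(2-q)/2}+|t-s|^{p\beta/2}\bigr)$ for every $p\geq 2$, with $\beta=2-q$ (wave) or $\beta=1-q/2$ (heat). Choosing $p$ large enough that both exponents exceed the parameter-space dimension $d+1$, and controlling the marginals trivially via $v^n(0,\cdot)\equiv 0$ together with Lemma \ref{lem: unif-moment}, Theorem \ref{thm: criterion} in the appendix delivers tightness in $\C([0,T]\times\R^d)$. The main obstacle I foresee is the temporal bound for the heat equation: one must be careful about the interplay between the Gaussian-in-frequency factor $e^{-(s-r)|\xi|^2}$ and the split at $|\xi|=1/\sqrt{t-s}$, ensuring that both the ``smoothing'' and the ``fresh'' contributions come out with the same exponent $1-q/2$ uniformly in $n$; the wave case, governed by the dispersive bound $|\mathcal{F}G_s(\xi)|\leq |\xi|^{-1}$, is more transparent.
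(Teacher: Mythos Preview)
Your proposal is correct and follows essentially the same route as the paper: the Fourier identity for the second moment, the split of the $\xi$-integral at the scale $1/|h|$ (respectively $1/\sqrt{t-s}$ for the heat equation in time) together with Lemma~\ref{lem:1}(a)--(b), the decomposition of the time increment into a ``smoothing'' piece on $[0,s]$ and a ``fresh'' piece on $[s,t]$, and finally the Gaussian upgrade to $L^p$ and the appeal to Theorem~\ref{thm: criterion}. The only cosmetic difference is that condition (i) of Theorem~\ref{thm: criterion} is verified in the paper simply by noting $v^n(0,0)=0$, so Lemma~\ref{lem: unif-moment} is not needed there.
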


In the following two subsections, we will prove the above proposition separately for the wave equation (Section \ref{sec: tightness-wave}) and the heat equation (\ref{sec: tightness-heat}). 
Moreover, the proof of Proposition \ref{prop:tightness} can be easily adapted to show that the random field $v$ given in \eqref{eq:822} satisfies estimates \eqref{eq:738} and \eqref{eq:739};
recall that, owing to Lemma \ref{lem: mu}, the measure $\mu$ satisfies condition \eqref{eq:472}.  Hence, Kolmogorov's continuity criterion implies that $v$ has a modification with (Hölder-)continuous paths. These statements can be summarized in the following result:  

\begin{proposition}\label{prop: vhol}
	Let $v$ be the random field defined by \eqref{eq:822}, where $G$ is the fundamental solution of the wave equation (resp. heat equation). Assume that Hypotheses \textbf{(H1)} and \textbf{(H2)} are satisfied. Then, it holds:
	\begin{itemize}
		\item[(a)] For any compact $K\subset \R^d$, there is a constant $C>0$ such that, for all $x,z\in K$,
		$$
		\sup_{t\in[0,T]} \E{|v(t,x)-v(t,z)|^{2}}\leq C|x-z|^{2-q}.
		$$
		
		\item[(b)] There exists a constant $C>0$ such that, for any $s,t \in [0,T]$,
		$$
		\sup_{x\in \R^d} \E{|v(t,x)-v(s,x)|^{2}}
		\leq
		\left\{
		\begin{array}{cc}
			|t-s|^{2-q}, \quad \text{wave equation},\\ [0.1cm]
			|t-s|^{1-\frac{q}{2}}, \quad \text{heat equation}.
		\end{array}
		\right.
		$$	
	\end{itemize}
	Furthermore, $v$ has a version with (Hölder-)continuous paths. 
\end{proposition}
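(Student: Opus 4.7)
The plan is to piggyback on the proof of Proposition \ref{prop:tightness}, since the random field $v$ is formally the ``$\mu$-version'' of $v^n$ and all estimates there are driven by the single integrability condition supplied by Hypothesis \textbf{(H1)}. First I would verify that $v$ is a well-defined mean-zero Gaussian random field; this follows from Lemma \ref{lem: mu}, which gives the existence of $q \in (0,2)$ with $\int_{\R^d}\mu(d\xi)/(1+|\xi|^q)<\infty$, and hence Dalang's condition holds for $\mu$ (cf.\ Remark \ref{rmk: Dalang}), so the stochastic integral defining $v(t,x)$ in \eqref{eq:822} belongs to $\hac$ (the Hilbert space associated with $W$) by the same argument via \cite[Lem.\ 3.2]{NQ} used for $v^n$.

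Next, for the moment estimates (a) and (b), I would re-run verbatim the Fourier-analytic computations that give Proposition \ref{prop:tightness}, but with $\mu$ in place of $\mu_n$. Concretely, using $\F G_{t-s}(x-\cdot)(\xi)=e^{-i\langle x,\xi\rangle}\overline{\F G_{t-s}(\xi)}$, one writes
\begin{equation*}
\E{|v(t,x)-v(t,z)|^2}=\int_0^t\int_{\R^d}|1-e^{-i\langle z-x,\xi\rangle}|^2\,|\F G_{t-s}(\xi)|^2\,\mu(d\xi)\,ds,
\end{equation*}
and analogously a formula involving $|\F G_{t-s}(\xi)-\F G_{s-\sigma}(\xi)|^2$ for the temporal increment (with the obvious bookkeeping for the extra $\int_t^{t'}$ term in the wave case and for the heat kernel). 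The key bound is identical to the one in Proposition \ref{prop:tightness}: after absorbing the oscillating factors into powers of $|x-z|$ or $|t-s|$ by standard interpolation, what remains is an integral controlled by $\int_{\R^d}\mu(d\xi)/(1+|\xi|^q)$, which is finite by Lemma \ref{lem: mu}. The role played there by $\sup_n\int_{\R^d}\mu_n(d\xi)/(1+|\xi|^q)$ is now played by this single integral, so the same estimates \eqref{eq:738}--\eqref{eq:739} hold for $v$.

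Finally, to upgrade the $L^2$-moduli of continuity to (Hölder-)continuous paths, I would invoke that $v$ is Gaussian: for any centered Gaussian random variable $Z$ and any $p\ge 1$ one has $\E{|Z|^{2p}}=c_p(\E{|Z|^2})^p$. Applying this to $Z=v(t,x)-v(s,z)$, the bound $\E{|Z|^2}\le C(|t-s|^{2-q}+|x-z|^{2-q})$ (wave) or $\E{|Z|^2}\le C(|t-s|^{1-q/2}+|x-z|^{2-q})$ (heat) yields $\E{|Z|^{2p}}\le C_p(|t-s|+|x-z|)^{\gamma p}$ with $\gamma>0$ independent of $p$. Choosing $p$ large enough that $\gamma p > d+1$, Kolmogorov's continuity criterion (in $d+1$ parameters) produces a modification with locally Hölder-continuous sample paths. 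There is no real obstacle: the only thing to be careful about is that the spatial estimate is stated on a compact $K\subset\R^d$, but this suffices for Kolmogorov's criterion, which is a local statement, and gives a continuous version on each compact $[0,T]\times K$ and hence on all of $[0,T]\times\R^d$.
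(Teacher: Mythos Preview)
Your proposal is correct and follows exactly the approach indicated by the paper: in the paragraph preceding Proposition \ref{prop: vhol}, the authors state that the proof of Proposition \ref{prop:tightness} can be adapted to $v$ using that $\mu$ satisfies \eqref{eq:472} by Lemma \ref{lem: mu}, and that Kolmogorov's criterion then yields the continuous modification. Your reduction to the single integral $\int_{\R^d}\mu(d\xi)/(1+|\xi|^q)$ in place of the supremum over $n$, followed by the Gaussian hypercontractivity step to boost moments and apply Kolmogorov, matches this exactly.
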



\subsubsection{Wave equation}

\label{sec: tightness-wave}

Here, we prove Proposition \ref{prop:tightness} in the case where $G$ in \eqref{eq: stochconv} is the fundamental solution of the wave equation in $\R^d$, $d\in \{1,2,3\}$. In this case, we recall that, for all $t>0$, the Fourier transform of $G_t$ admits indeed a unified expression for all dimensions, which is the following: 
\[
\F G_{t}(\xi)=\frac{\sin(t|\xi|)}{|\xi|},\quad t>0,\;\xi\in\R^d.
\]
Let us first analyze the square moment of the space increments of
$v^{n}$. Let $t\in(0,T]$ (the case $t=0$ is trivial) and $x,z\in\R^d$,
define $h:=z-x$ and assume that $|h|\in(0,1)$. Then, 
\begin{align}
\E{|v^{n}(t,x)-v^{n}(t,z)|^{2}} & =\int_{0}^{t}\int_{\R^d}\big|\F\big(G_{t-s}(x-\cdot)-G_{t-s}(z-\cdot)\big)(\xi)\big|^{2}\mu_{n}(d\xi)ds\nonumber \\
 & =\int_{0}^{t}\int_{\R^d}\big|1-e^{-i<\xi, h>}\big|^{2}|\F G_{t-s}(\xi)|^{2}\mu_{n}(d\xi)ds \nonumber \\
 & =2\int_{0}^{t}\int_{\R^d}\big(1-\cos(<\xi, h>)\big)\frac{\sin^{2}((t-s)|\xi|)}{|\xi|^{2}}\mu_{n}(d\xi)ds \nonumber \\
 & \leq2\int_{0}^{T}\int_{\R^d}\big(1-\cos(<\xi, h>)\big)\frac{\sin^{2}(s|\xi|)}{|\xi|^{2}}\mu_{n}(d\xi)ds.
 \label{eq:99}
\end{align}
Applying the inequality $1-\cos(x)\leq\frac{x^{2}}{2}$, which holds
for any $x\in\R$, and (a) of Lemma \ref{lem:1}, we have 
\begin{align}
2\int_{0}^{T}\int_{\{|\xi|\leq\frac{1}{|h|}\}}\big(1-\cos(<\xi, h>)\big)\frac{\sin^{2}(s|\xi|)}{|\xi|^{2}}\mu_{n}(d\xi)ds & \leq |h|^2\int_{0}^{T}\int_{\{|\xi|\leq\frac{1}{|h|}\}}\sin^{2}(s|\xi|)\mu_{n}(d\xi)ds\nonumber \\
 & \leq T |h|^{2}\sup_{n\geq1}\mu_{n}(B_{1/|h|})\nonumber \\
 & \leq C|h|^{2-q}.
 \label{eq:111}
\end{align}
On the other hand, owing to (b) in Lemma \ref{lem:1},
it holds 
\begin{align}
2\int_{0}^{T}\int_{\{|\xi|>\frac{1}{|h|}\}}\big(1-\cos(<\xi, h>)\big)\frac{\sin^{2}(s|\xi|)}{|\xi|^{2}}\mu_{n}(d\xi)ds & \leq4T\int_{\{|\xi|>\frac{1}{|h|}\}}\frac{\mu_{n}(d\xi)}{|\xi|^{2}}\nonumber \\
 & \leq4T |h|^{2-q}\sup_{n\geq1}\int_{\{|\xi|>\frac{1}{|h|}\}}\frac{\mu_{n}(d\xi)}{|\xi|^{q}}\nonumber \\
 & \leq4T |h|^{2-q}\sup_{n\geq1}\int_{\{|\xi|>1\}}\frac{\mu_{n}(d\xi)}{|\xi|^{q}}\nonumber \\
 & \leq C |h|^{2-q}.\label{eq:2}
\end{align}
Note that we have also used that $|h|\in(0,1)$.
Putting together \eqref{eq:111} and \eqref{eq:2}, we get that there
is a constant $C$ such that 
\begin{equation}
\sup_{n\geq1}\E{|v^{n}(t,x)-v^{n}(t,z)|^{2}}\leq C|x-z|^{2-q},\label{eq:6}
\end{equation}
for every $t\in[0,T]$ and $x,z\in\R^d$ such that $|x-z|<1$. This
estimate can be extended to any $x,z$ belonging to an arbitrary compact
set of $\R^d$. In this case, the constant $C$ depends on the underlying
compact set.

Let us now estimate the square moment of the time increments of $u_{n}$.
Let $t\in[0,T]$, $x\in\R^d$ and $h>0$ such that $t+h\leq T$. We
assume that $h<1$. Then, 
\begin{equation}
\E{|v_{n}(t+h,x)-v_{n}(t,x)|^{2}}\leq C(A_{1}^{n}+A_{2}^{n}),\label{eq:8}
\end{equation}
where 
\[
A_{1}^{n}=\E{\left|\int_{t}^{t+h}\int_{\R^d}G_{t+h-s}(x-y)W^{n}(ds,dy)\right|^{2}},
\]
\[
A_{2}^{n}=\E{\left|\int_{0}^{t}\int_{\R^d}\big\{ G_{t+h-s}(x-y)-G_{t-s}(x-y)\big\} W^{n}(ds,dy)\right|^{2}}.
\]
First, we deal with the term $A_{1}^{n}$. It clearly holds that 
\begin{align*}
A_{1}^{n} & =\int_{t}^{t+h}\int_{\R^d}|\F G_{t+h-s}(x-\cdot)(\xi)|^{2}\mu_{n}(d\xi)ds\\
 & =\int_{0}^{h}\int_{\R^d}|\F G_{s}(\xi)|^{2}\mu_{n}(d\xi)ds\\
 & =\int_{0}^{h}\int_{\R^d}\frac{\sin^{2}(s|\xi|)}{|\xi|^{2}}\mu_{n}(d\xi)ds.
\end{align*}
We have that, applying (a) in Lemma \ref{lem:1}, 
\begin{align*}
\int_{0}^{h}\int_{\{|\xi|\leq1\}}\frac{\sin^{2}(s|\xi|)}{|\xi|^{2}}\mu_{n}(d\xi)ds & \leq\int_{0}^{h}\int_{\{|\xi|\leq1\}}s^{2}\mu_{n}(d\xi)ds\\
 & \leq Ch^{3}\sup_{n\geq1}\mu_{n}(B_{1})\\
 & \leq Ch^{3}.
\end{align*}
On the other hand, by Hypothesis \textbf{(H1)},
we get 
\begin{align*}
\int_{0}^{h}\int_{\{|\xi|>1\}}\frac{\sin^{2}(s|\xi|)}{|\xi|^{2}}\mu_{n}(d\xi)ds & \leq\int_{0}^{h}\int_{\{|\xi|>1\}}\frac{\mu_{n}(d\xi)}{|\xi|^{2}}ds\\
 & \leq Ch\int_{\{|\xi|>1\}}\frac{\mu_{n}(d\xi)}{1+|\xi|^{2}}\\
 & \leq Ch\sup_{n\geq1}\int_{\R^d}\frac{\mu_{n}(d\xi)}{1+|\xi|^{2}}\\
 & \leq Ch,
\end{align*}
where we have used that
\[
\int_{\R^d}\frac{\mu_{n}(d\xi)}{1+|\xi|^2}
\leq C \int_{\R^d}\frac{\mu_{n}(d\xi)}{1+|\xi|^q}.
\] 
Hence, we have proved that 
\begin{equation}
\sup_{n\geq1}A_{1}^{n}\leq Ch.\label{eq:4}
\end{equation}
Regarding $A_{2}^{n}$, we have 
\begin{align}
A_{2}^{n} & =\int_{0}^{t}\int_{\R^d}|\F G_{t+h-s}(x-\cdot)(\xi)-\F G_{t-s}(x-\cdot)(\xi)|^{2}\mu_{n}(d\xi)ds\nonumber \\
 & =\int_{0}^{t}\int_{\R^d}\frac{1}{|\xi|^{2}}\big|\sin((t+h-s)|\xi|)-\sin((t-s)|\xi|)\big|^{2}\mu_{n}(d\xi)ds\nonumber \\
 & \leq C\int_{\R^d}\frac{1}{|\xi|^{2}}\min(1,h|\xi|)^{2}\mu_{n}(d\xi)\nonumber \\
 & =C\int_{\{|\xi|\leq\frac{1}{h}\}}h^{2}\mu_{n}(d\xi)+C\int_{\{|\xi|>\frac{1}{h}\}}\frac{\mu_{n}(d\xi)}{|\xi|^{2}}\nonumber \\
 & \leq Ch^{2}\sup_{n\geq1}\mu_{n}(B_{1/h})+Ch^{2-q}\sup_{n\geq1}\int_{\{|\xi|>1\}}\frac{\mu_{n}(d\xi)}{|\xi|^q}\nonumber \\
 & \leq Ch^{2-q}.\label{eq:5}
\end{align}
where we have applied Lemma \ref{lem:1}
and the fact that $h<1$. Estimates \eqref{eq:4} and \eqref{eq:5}
imply that there exists a constant $C$ such that 
\begin{equation*}
\sup_{n\geq1}\E{|v^{n}(t+h,x)-v^{n}(t,x)|^{2}}\leq Ch^{\min(1,2-q)},
\end{equation*}
for all $t\in[0,T]$, $x\in\R^d$ and $h\in(0,1)$ such that $t+h\leq T$.
This bound can be easily extended to all $h$ satisfying $t+h\leq T$. Moreover, by Remark \ref{rmk:5}, without any loose of generality we may assume that 
 $2-q<1$. Hence, it holds that
\begin{equation}
	\sup_{n\geq1}\E{|v^{n}(t+h,x)-v^{n}(t,x)|^{2}}\leq Ch^{2-q},\label{eq:12}
\end{equation}
for all $t\in[0,T]$, $x\in\R^d$ and any $h>0$ such that $t+h\leq T$.

Estimate \eqref{eq:12}, together with \eqref{eq:6}, allows us to
invoke Theorem \ref{thm: criterion} so that we deduce that the laws of
$\{v^{n},\,n\geq1\}$ are tight in the space $\mathcal{C}([0,T]\times\R^d)$.
Precisely, note that condition (i) of Theorem \ref{thm: criterion} is clearly
satisfied because $v^{n}(0,0)=0$. As far as condition (ii) is concerned,
recall that $v^n$ is a centered Gaussian process, and we have 
\[
\sup_{n\geq1}\E{|v^{n}(t',x')-v^{n}(t,x)|^{p}}\leq C\big(|t'-t|+|x'-x|\big)^{\delta},
\]
for all $p\geq2$, $t',t\in[0,T]$, $x',x\in J$ and any compact
$J\subset\R^d$, where 
\[
\delta=\frac{p}{2}(2-q).
\]
Thus, it suffices to take $p$ sufficiently large to ensure that (ii)
of Theorem \ref{thm: criterion} is fulfilled. This concludes the proof
of Proposition \ref{prop:tightness} in the case of the wave equation.
\qed


\subsubsection{Heat equation}

\label{sec: tightness-heat}

We now prove Proposition \ref{prop:tightness} in the case where $G$ in \eqref{eq: stochconv} is given by the fundamental solution of the heat equation in $\R^d$. It is well-known that
\[
\F G_{t}(\xi)=e^{-\frac{t|\xi|^{2}}{2}},\quad t>0,\;\xi\in\R^d.
\]
Let $t\in(0,T]$ and $x,z\in\R^d$, define $h:=z-x$ and assume that
$|h|\in(0,1)$. Then, arguing as in the case of the wave equation and
applying Fubini theorem and Lemma \ref{lem:1}, we have 
\begin{align}
\E{|v^{n}(t,x)-v^{n}(t,z)|^{2}} & \leq2\int_{0}^{T}\int_{\R^d}\big(1-\cos(<\xi, h>)\big)e^{-s|\xi|^{2}}\mu_{n}(d\xi)ds\nonumber \\
 & =2\int_{\R^d}\big(1-\cos(<\xi, h>)\big)\frac{1-e^{-T|\xi|^{2}}}{|\xi|^{2}}\mu_{n}(d\xi)\nonumber \\
 & \leq |h|^{2}\sup_{n\geq1}\mu_{n}(B_{1/|h|})+|h|^{2-q}\sup_{n\geq1}\int_{\{|\xi|>1\}}\frac{\mu_{n}(d\xi)}{|\xi|^q}\nonumber \\
 & \leq C |h|^{2-q}.\label{eq:11}
\end{align}
For the time increments, we argue as in the case of the wave equation
and consider the decomposition \eqref{eq:8}. Then, by Lemma \ref{lem:1},
\begin{align}
A_{1}^{n} & =\int_{t}^{t+h}\int_{\R^d}|\F G_{t+h-s}(x-\cdot)(\xi)|^{2}\mu_{n}(d\xi)ds\nonumber \\
 & =\int_{0}^{h}\int_{\R^d}e^{-s|\xi|^{2}}\mu_{n}(d\xi)ds\nonumber \\
 & =\int_{\R^d}\frac{1-e^{-h|\xi|^{2}}}{|\xi|^{2}}\mu_{n}(d\xi)\nonumber \\
 & =\int_{\{|\xi|^2\leq\frac{1}{h}\}}\frac{1-e^{-h|\xi|^{2}}}{|\xi|^{2}}\mu_{n}(d\xi)+\int_{\{|\xi|^2>\frac{1}{h}\}}\frac{1-e^{-h|\xi|^{2}}}{|\xi|^{2}}\mu_{n}(d\xi)\nonumber \\
 & \leq h\sup_{n\geq1}\mu_{n}\big(B_{\frac{1}{\sqrt{h}}}\big)+h^{1-\frac{q}{2}}\sup_{n\geq1}\int_{\{|\xi|>1\}}\frac{\mu_{n}(d\xi)}{|\xi|^{q}}\nonumber \\
 & \leq C h^{1-\frac{q}{2}}.
\label{eq:9}
\end{align}
On the other hand, we can argue as follows: 
\begin{align}
A_{2}^{n} & =\int_{0}^{t}\int_{\R^d}|\F G_{t+h-s}(x-\cdot)(\xi)-\F G_{t-s}(x-\cdot)(\xi)|^{2}\mu_{n}(d\xi)ds\nonumber \\
 & =\int_{0}^{t}\int_{\R^d}e^{-s|\xi|^{2}}\left(1-e^{-\frac{h|\xi|^{2}}{2}}\right)^{2}\mu_{n}(d\xi)ds\nonumber \\
 & =\int_{\R^d}\frac{1-e^{-t|\xi|^{2}}}{|\xi|^{2}}\left(1-e^{-\frac{h|\xi|^{2}}{2}}\right)^{2}\mu_{n}(d\xi)\nonumber \\
 & \leq\int_{\R^d}\frac{1}{|\xi|^{2}}\left(1-e^{-\frac{h|\xi|^{2}}{2}}\right)^{2}\mu_{n}(d\xi)\nonumber \\
 & \leq\int_{\{|\xi|^{2}\leq\frac{1}{h}\}}\frac{1}{|\xi|^{2}}\left(1-e^{-\frac{h|\xi|^{2}}{2}}\right)^{2}\mu_{n}(d\xi)+\int_{\{|\xi|^{2}>\frac{1}{h}\}}\frac{1}{|\xi|^{2}}\left(1-e^{-\frac{h|\xi|^{2}}{2}}\right)^{2}\mu_{n}(d\xi)\nonumber \\
 & \leq\frac{h^{2}}{4}\int_{\{|\xi|^{2}\leq\frac{1}{h}\}}|\xi|^{2}\mu_{n}(d\xi)+\int_{\{|\xi|^{2}>\frac{1}{h}\}}\frac{\mu_{n}(d\xi)}{|\xi|^{2}}\nonumber \\
 & \leq\frac{h}{4} \sup_{n\geq1} \mu_n\big(B_{\frac{1}{\sqrt{h}}}\big)  +h^{1-\frac{q}{2}}\int_{\{|\xi|^{2}>\frac{1}{h}\}}\frac{1}{|\xi|^{q}}\mu_{n}(d\xi)\nonumber \\
 & \leq\frac{h}{4} \sup_{n\geq1} \mu_n\big(B_{\frac{1}{\sqrt{h}}}\big)+h^{1-\frac{q}{2}}\int_{\{|\xi|>1\}}\frac{1}{|\xi|^{q}}\mu_{n}(d\xi)\nonumber \\
 & \leq C h^{1-\frac{q}{2}},
 \label{eq:10}
\end{align}
where we have also applied Lemma \ref{lem:1}.
Putting together estimates \eqref{eq:9} and \eqref{eq:10}, we end up with
\[
\sup_{n\geq 1} \E{|v^n(t+h,x)-v^n(t,x)|^2} \leq C h^{1-\frac{q}{2}}.
\]
Hence, owing to \eqref{eq:11}, we can conclude the proof as in the previous section. \qed


\subsection{Convergence of the covariance function}
\label{sec: cov}

We remind  that Proposition \ref{prop:tightness} states that the family of laws of 
$\{v^n,\, n\geq 1\}$ is tight in $\mathcal{C}([0,T]\times \R^d)$, and thus relatively compact in this space. The present section is devoted to identify the limit law by showing that the finite dimensional distributions of $v^n$ converge to those of $v$, where we recall that the latter is the Gaussian random field given by
\beq
v(t,x)=\int_0^t\int_{\R^d} G_{t-s}(x-y)W(ds,dy), \quad (t,x)\in [0,T]\times \R^d,
\label{eq:823}
\eeq
and here $W$ denotes a Gaussian spatially homogeneous noise as \eqref{eq:19} with spectral measure $\mu$. 
Since $\mu$ satisfies Dalang's condition (see Remark \ref{rmk: Dalang}), 
 the computations in Examples 6 and 8 of \cite{DalangEJP} allow us to conclude that $v$ is well-defined and satisfies, for all $p\geq 1$, 
\[
\sup_{(t,x)\in [0,T]\times \R^d} \E{|v(t,x)|^p}<\infty.
\]
In the next proposition, which is the main result of the present section, we show that 
the covariance function of $v^n$ converges to that of $v$, as $n\to\infty$. This fact has an important consequence. Namely, it implies the convergence of the corresponding finite dimensional distributions, because $v^n$, $n\geq 1$, and $v$ are centered Gaussian processes.

\begin{proposition}\label{prop:cov}
		Let $v^n$ and $v$ be the random fields defined by \eqref{eq: stochconv} and \eqref{eq:823}, respectively, where $G$ is the fundamental solution of the wave equation (resp. heat equation).
	Assume that Hypothesis \textbf{(H2)} is satisfied. Then, for all 
	$t,t'\in[0,T]$ and $x,x'\in\R^d$, it holds
	\[
	\lim_{n\to\infty} \E{v^{n}(t,x)v^{n}(t',x')} = \E{v(t,x)v(t',x')}.
	\]  
\end{proposition}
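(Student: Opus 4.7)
The plan is to express both covariance functions as integrals of a common continuous, $n$-independent test function $f$ against the respective spectral measures, and then invoke Hypothesis \textbf{(H2)} directly.

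First, I start from
\[
\E{v^n(t,x)v^n(t',x')} = \int_0^{t\wedge t'}\!\!\int_{\R^d}\F(G_{t-s}(x-\cdot))(\xi)\,\overline{\F(G_{t'-s}(x'-\cdot))(\xi)}\,\mu_n(d\xi)\,ds.
\]
Since $G_{t}$ is real and symmetric (even as a measure in the $3$-D wave case), $\F G_{t}$ is a real-valued function, and $\F(G_{t-s}(x-\cdot))(\xi) = e^{-i\langle\xi,x\rangle}\F G_{t-s}(\xi)$. Swapping the order of integration via Fubini (legitimate in view of the bound established in the next step and Hypothesis \textbf{(H1)}) and using the symmetry of $\mu_n$ to discard the imaginary part yields
\[
\E{v^n(t,x)v^n(t',x')} = \int_{\R^d}\cos\bigl(\langle\xi,x-x'\rangle\bigr)\,\Psi_{t,t'}(\xi)\,\mu_n(d\xi),
\]
where $\Psi_{t,t'}(\xi) := \int_0^{t\wedge t'}\F G_{t-s}(\xi)\F G_{t'-s}(\xi)\,ds$. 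The same identity holds for the limit field $v$, with $\mu_n$ replaced by $\mu$.

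Second, set $f(\xi):=\cos(\langle\xi,x-x'\rangle)\Psi_{t,t'}(\xi)$. Continuity of $f$ on $\R^d$ follows from dominated convergence applied to the integral defining $\Psi_{t,t'}$. The main technical point is to establish the bound $|f(\xi)|\leq C/(1+|\xi|^2)$ for some constant $C$ depending only on $T$. In the heat case, an explicit computation gives
\[
\Psi_{t,t'}(\xi) = \frac{e^{-|t-t'||\xi|^2/2}-e^{-(t+t')|\xi|^2/2}}{|\xi|^2},
\]
whose numerator is $O(|\xi|^2)$ near the origin (giving boundedness there) and bounded by $1$ at infinity. In the wave case, $\F G_s(\xi) = \sin(s|\xi|)/|\xi|$, and the elementary inequality $|\sin r|\leq\min(1,r)$ gives $|\F G_{t-s}(\xi)\F G_{t'-s}(\xi)|\leq 1/|\xi|^2$ for $|\xi|\geq 1$, hence $|\Psi_{t,t'}(\xi)|\leq (t\wedge t')/|\xi|^2$ there, while near the origin $|\F G_s(\xi)|\leq s$ keeps $\Psi_{t,t'}$ bounded.

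Third, Hypothesis \textbf{(H2)} now applies directly to $f$ and yields $\int f\,d\mu_n \to \int f\,d\mu$, which is precisely the claim. The only nontrivial step in this plan is the verification of the $C/(1+|\xi|^2)$ bound for $\Psi_{t,t'}$; in particular, handling the apparent $|\xi|^{-2}$ singularity at the origin, which in both cases is cancelled by the vanishing of $\F G_{s}$ at $\xi=0$ and is made precise via the elementary estimates indicated above.
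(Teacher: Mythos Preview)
Your proof is correct and follows essentially the same approach as the paper: express the covariance as $\int_{\R^d} f(\xi)\,\mu_n(d\xi)$ for a continuous test function $f$ built from $\int_0^{t\wedge t'}\F G_{t-s}(\xi)\F G_{t'-s}(\xi)\,ds$, verify the bound $|f(\xi)|\le C/(1+|\xi|^2)$ separately in the heat and wave cases, and apply \textbf{(H2)}. Two small remarks: the proposition assumes only \textbf{(H2)}, so justify Fubini via the well-definedness of $v^n$ (which already forces $\int_0^T\int|\F G_s|^2\,d\mu_n\,ds<\infty$) rather than \textbf{(H1)}; and in your closing sentence, $\F G_s(0)=1$ for the heat kernel, so the cancellation at the origin comes from the numerator difference you computed, not from any vanishing of $\F G_s$.
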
 

\begin{proof}
Let us first deal with the case of the heat equation. 
Fix $t,t'\in[0,T]$ and $x,x'\in\R^d$. We may assume that $0\le t<t'$.
It holds  
\[
\E{v^{n}(t,x)v^{n}(t',x')}=\intt \int_{\R^d}  e^{-i<\xi,x-x'>}e^{-\frac{(t-s)}{2}|\xi|^{2}}e^{-\frac{(t'-s)}{2}|\xi|^{2}} \mu_{n}(d\xi)ds.
\]
We will see that this expression converges to 
\[
\E{v(t,x)v(t',x')}=\intt \int_{\R^d}  e^{-i<\xi,x-x'>}e^{-\frac{(t-s)}{2}|\xi|^{2}}e^{-\frac{(t'-s)}{2}|\xi|^{2}} \mu(d\xi)ds,
\]
as $n\to\infty$. Due to Hypothesis \textbf{(H2)}, and since $e^{-i<\xi,x-x'>}$ 
is bounded and continuous as a function of $\xi$, it suffices to
see that 
\[
I(\xi):=\intt e^{-\frac{(t-s)}{2}|\xi|^{2}}e^{-\frac{(t'-s)}{2}|\xi|^{2}}ds
\]
defines a continuous functions such that 
\beq
I(\xi)\le\frac{C_{t,t'}}{1+\axi^{2}},
\label{eq:21} 
\eeq
for all $\xi\in \R^d$, where $C_{t,t'}$ is some positive constant only depending on $t$
and $t'$.
By the dominated convergence theorem, it is clear that $I$ is a
continuous function. On the other hand, 
\[
I(\xi)=e^{-(t+t')\frac{|\xi|^{2}}{2}}\intt e^{s|\xi|^{2}}\,ds=\frac{1}{|\xi|^{2}}\big(e^{-(t'-t)\frac{|\xi|^{2}}{2}}-e^{-(t'+t)\frac{|\xi|^{2}}{2}}\big).
\]
We study separately the cases $\axi\le1$ and $\axi>1$. If $\axi\le1$,
by the mean value theorem, 
\[
e^{-(t'-t)\frac{|\xi|^{2}}{2}}-e^{-(t'+t)\frac{|\xi|^{2}}{2}}\le C_{t,t'}|\xi|^{2},
\]
and this implies that $I(\xi)\le C_{t,t'}.$ If $\axi>1$, we have
the obvious bound $I(\xi)\le1/|\xi|^{2}$. The above two facts imply
\eqref{eq:21}, which concludes the proof for the heat equation. 

Let us now prove Proposition \ref{prop:cov} in the case of the wave equation. Fix $t,t'\in[0,T]$ with $0\le t<t'$ and $x,x'\in\R^d$. We have that  
\[
\E{v^{n}(t,x)v^{n}(t',x')}= \intt \int_{\R^d} e^{-i<\xi,x-x'>}\frac{\sin((t-s)\axi)\sin((t'-s)\axi)}{\axi^{2}}\mu_{n}(d\xi)ds.
\]
As for the heat equation, it suffices to show that the function $J$
defined as 
\[
J(\xi)=\intt\frac{\sin((t-s)\axi)\sin((t'-s)\axi)}{\axi^{2}}ds, \quad \xi\in \R^d, 
\]
is continuous and satisfies 
\[
|J(\xi)|\le\frac{C_{t,t'}}{1+\axi^{2}},\quad \xi\in \R^d.
\]
First, we study the continuity of $J$. In the case
$0<\axi\le1$, we have 
\begin{equation}
\Big|\frac{\sin((t-s)\axi)\sin((t'-s)\axi)}{\xi^{2}}\Big|\le(t-s)(t'-s).\label{aco}
\end{equation}
The right-hand side of the above inequality, as a function of $s$,
belongs to $L^{1}([0,t])$. Hence, by the dominated convergence theorem, we have that
$J$ is continuous for $0<\axi\le1$, because the integrand in the
expression of $J$ is continuous. Secondly, if $\axi>1$, we have 
\[
\Big|\frac{\sin((t-s)\axi)\sin((t'-s)\axi)}{\xi^{2}}\Big|\le\frac{1}{\axi^{2}}.
\]
As before, applying again the dominated convergence theorem, we obtain the continuity of $J$ for $\axi>1$. Finally, we also need to consider the case $\xi=0$. Here, $J(0)$ is in principle not well-defined, so 
we must prove that $\lim_{\xi\to0}J(\xi)$ exists. To this end, we first note that, if $\xi$
belongs to a neighborhood of $0$, the estimate (\ref{aco}) is clearly satisfied. Next, we have that  
\begin{align*}
\lim_{\xi\to0}\frac{\sin((t-s)\axi)\sin((t'-s)\axi)}{\axi^{2}} & =\lim_{h\to0+}\frac{\sin((t-s)h)\sin((t'-s)h)}{h^{2}}\\
& =\lim_{h\to0+}\frac{((t'-s)h+o(h))((t-s)h+o(h))}{h^{2}}\\
& =(t'-s)(t-s).
\end{align*}
Therefore, applying the dominated convergence theorem, we obtain that 
\[
\lim_{\xi \to 0} J(\xi) = \intt(t'-s)(t-s)ds.
\]
It remains to prove that 
\beq
J(\xi)\le\frac{C_{t,t'}}{1+\axi^{2}},\quad \xi\in \R^d. 
\label{eq:23}
\eeq
If $\axi\le1$, It holds  
\[
|J(\xi)|=\left|\intt\frac{\sin((t-s)\axi)\sin((t'-s)\axi)}{\axi^{2}}\,ds\right|\le\intt(t'-s)(t-s)\,ds=C_{t,t}.
\]
If $\axi>1$, it is clear that 
\[
|J(\xi)|\le\intt\frac{1}{\axi^{2}}ds=\frac{t}{\axi^{2}}.
\]
Thus, we have verified \eqref{eq:23} and the proof of 
Proposition \ref{prop:cov} is now complete. 
\end{proof}


\section{Quasi-linear case: well-posedness and path continuity}
\label{sec: exis-cont}

This section is devoted to prove that equation \eqref{eq: mild} admits a unique solution which has a version with jointly continuous paths. The following result deals with the existence and uniqueness of solution to equation \eqref{eq: mild}. 

\begin{theorem}\label{thm: existence}
	Let $n\geq 1$ and $p\geq 2$. Assume that the initial data satisfy Hypothesis \ref{hyp: ic}, $b$ is a globally Lipschitz function and that Dalang's condition holds for the spectral measure $\mu_n$:
	\beq
	\int_{\R^d} \frac{\mu_n(d\xi)}{1+|\xi|^2} <\infty.
	\label{eq:27} 
	\eeq
	Then, equation \eqref{eq: mild} admits a unique solution in the space of $L^2(\Omega)$-continuous and adapted processes satisfying 
	\[
	\sup_{(t,x)\in [0,T]\times \R^d} \E{|u^n(t,x)|^p}<\infty. 
	\]   
\end{theorem}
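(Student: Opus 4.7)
The plan is to prove existence and uniqueness via a standard Picard iteration scheme, treating the stochastic convolution as a fixed additive inhomogeneity. Specifically, I would define $v^n(t,x)=\int_0^t\int_{\R^d}G_{t-s}(x-y)W^n(ds,dy)$ and then set $u^n_0(t,x):=I^d_0(t,x)+v^n(t,x)$ and, recursively,
\[
u^n_{k+1}(t,x):=I^d_0(t,x)+v^n(t,x)+\int_0^t \bigl(b(u^n_k(s))*G_{t-s}\bigr)(x)\,ds.
\]
Adaptedness and $L^2(\Omega)$-continuity are preserved at each step (the first by construction, the second by dominated convergence applied to the deterministic convolution). For the uniform moment bound at step $0$, Lemma \ref{lem: dqs} controls $I_0^d$, while Dalang's $L^p$ estimate for the stochastic integral (as in Lemma \ref{lem: unif-moment} but using only the integrability \eqref{eq:27}) gives $\sup_{(t,x)\in[0,T]\times\R^d}\E{|v^n(t,x)|^p}<\infty$.

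The core contraction step is controlled by setting $M_k(t):=\sup_{x\in\R^d}\E{|u^n_{k+1}(t,x)-u^n_k(t,x)|^p}$. Using the Lipschitz property of $b$, together with Jensen's inequality applied to the measure $G_{t-s}(dy)$ (which is a probability density for the heat equation, and has finite total mass $(t-s)$ for the wave equation by \eqref{eq:56}), I would bound, in the wave case,
\[
\bigl|\bigl((b(u^n_k(s))-b(u^n_{k-1}(s)))*G_{t-s}\bigr)(x)\bigr|^p\le (t-s)^{p-1}\int_{\R^d}\!\bigl|(b(u^n_k)-b(u^n_{k-1}))(s,x-y)\bigr|^p G_{t-s}(dy),
\]
and analogously in the heat case (where the prefactor disappears). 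Taking expectations, applying Minkowski in the outer $s$-integral, and using the Lipschitz constant of $b$, this reduces to a linear Volterra-type inequality $M_{k+1}(t)\le C\int_0^t (t-s)^{\alpha} M_k(s)\,ds$ for an appropriate $\alpha\ge 0$, whose iteration yields $\sum_k M_k(T)^{1/p}<\infty$; hence $(u^n_k)_k$ is Cauchy uniformly in $(t,x)\in[0,T]\times\R^d$ in $L^p(\Omega)$ and converges to a limit $u^n$ satisfying \eqref{eq: mild}, with the required uniform moment bound following from the same iteration applied to $\sup_x\E{|u^n_k(t,x)|^p}$ and the triangle inequality.

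For uniqueness, given two solutions $u^n$ and $\tilde u^n$ in the prescribed space, the same Jensen/Lipschitz estimate applied to the difference yields an inequality of the form $\psi(t)\le C\int_0^t (t-s)^\alpha \psi(s)\,ds$ for $\psi(t):=\sup_x\E{|u^n(t,x)-\tilde u^n(t,x)|^p}$ (which is finite by the uniform moment hypothesis), and the generalised Gronwall lemma forces $\psi\equiv 0$. Finally, $L^2(\Omega)$-continuity of $u^n$ is inherited from $v^n$ (continuous by Proposition \ref{prop:tightness} applied to $\mu_n$) and from the $L^2$-continuity of the deterministic convolution term, which follows by dominated convergence using the uniform moment bound just established.

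The main obstacle is the three-dimensional wave equation, where $G_t$ is only a surface measure: one cannot invoke Fubini or Jensen with respect to a density. This is exactly why the Jensen estimate must be set up with the abstract measure $G_{t-s}(dy)$ and the total-mass identity \eqref{eq:56}, ensuring the bound depends on $t-s$ but not on any density estimate. A secondary technical point is verifying that the convolution $\bigl(b(u^n(s))*G_{t-s}\bigr)(x)$ is well-defined as a pathwise integral against the measure $G_{t-s}$; this is ensured by the continuity (or at least measurability and boundedness) of $s\mapsto b(u^n(s,\cdot))$ delivered by the Lipschitz assumption together with the uniform moment bound.
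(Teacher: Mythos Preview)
Your proposal is correct and follows essentially the same approach as the paper: Picard iteration with the stochastic convolution treated as a fixed additive term, Jensen against the measure $G_{t-s}(dy)$ using the total-mass bound \eqref{eq:56}, and a Volterra--Gronwall iteration for convergence and uniqueness. One minor caveat: you invoke Proposition~\ref{prop:tightness} for the $L^2(\Omega)$-continuity of $v^n$, but that result assumes \textbf{(H1)}, whereas the present theorem only assumes Dalang's condition \eqref{eq:27}; the paper instead proves $L^2(\Omega)$-continuity of $u_0^n$ directly via dominated convergence (splitting $\{|\xi|\le 1\}$ and $\{|\xi|>1\}$), which you could easily substitute.
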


\begin{proof}
It follows similar steps to those of \cite[Thm. 6]{DalangEJP} and
\cite[Thm. 3.1]{GJQ-Bernoulli} (see also  \cite[Thm. 4.3]{Dalang-Quer}). Indeed, it is important to remark that references \cite{DalangEJP} and \cite{Dalang-Quer} suppose that the corresponding noise's spectral measure is the inverse Fourier transform of a certain tempered measure, which we do not assume in the present paper (for example, in order to be able to treat fractional noises with $H<\frac12$). Nevertheless, the fact that we are dealing with an additive noise makes things easier for us and, in this sense, we can follow the same lines as in \cite[Thm. 3.1]{GJQ-Bernoulli}. We will mostly sketch the main steps to follow. 

We define the following Picard iteration scheme:
\[
u_0^n(t,x):= I_0^d(t,x) + \int_0^t \int_{\R^d} G_{t-s}(x-y)W^n(ds,dy), \quad (t,x)\in [0,T]\times \R^d,
\] 	
and, for $k\geq 1$,
\begin{equation}\label{eq:48}
u^n_k(t,x):=u_0^n(t,x) + \int_0^t  \big(b(u^n_{k-1}(s)) * G_{t-s}\big)(x) ds, \quad (t,x)\in [0,T]\times \R^d.
\end{equation}
Applying an induction argument one proves that, for all $k\geq 0$, the random field $u^n_k$ is adapted, $L^2(\Omega)$-continuous (thus has a jointly measurable modification) and satisfies 
\beq
\sup_{(t,x)\in [0,T]\times \R^d} \E{|u_k^n(t,x)|^p}<\infty. 
\label{eq:30}
\eeq
We will write the proof that $u^n_k$ is $L^2(\Omega)$-continuous, for all $k\geq 0$. 

First, let us verify that $u_0^n$ is $L^2(\Omega)$-continuous. The computations start as those in Sections \ref{sec: tightness-wave} and \ref{sec: tightness-heat}, but we point out that here, instead of hypotheses {\bf (H1)} and {\bf (H2)}, the spectral measure $\mu_n$ only satisfies Dalang's condition \eqref{eq:27}, so our strategy is slightly different.
First, we tackle the time increments. Let $(t,x)\in [0,T]\times \R^d$ and $h>0$ such that $t+h\leq T$. We consider the decomposition
\[
\E{|u_0^n(t+h,x)-u_0^n(t,x)|^2}\leq 2 (B_1+B_2+B_3),
\]
where 
\begin{align*}
B_1 & = |I_0^d(t+h,x)-I_0^d(t,x)|^2,\\
B_2 & = \E{\left|\int_{t}^{t+h}\int_{\R^d}G_{t+h-s}(x-y)W^{n}(ds,dy)\right|^{2}},\\
B_3 & = \E{\left|\int_{0}^{t}\int_{\R^d}\big\{ G_{t+h-s}(x-y)-G_{t-s}(x-y)\big\} W^{n}(ds,dy)\right|^{2}}.
\end{align*}
We will write the explicit computations in the case of the wave equation. The case of the heat equation can be done analogously. We know that $(t,x)\mapsto I_0^d(t,x)$ is continuous. Hence, for any compact $K\subset \R^d$, it holds
\[
\lim_{h\to 0} \sup_{x\in K} |I_0^d(t+h,x)-I_0^d(t,x)| =0.
\]
Next, we note that the term $B_2$ coincides with $A^n_1$ of Section \ref{sec: tightness-wave}. There, we proved that $B_2\leq C h$, uniformly in $(t,x)\in [0,T]\times \R^d$. Regarding $B_3$, it holds
\begin{align*}
	B_3 & = \int_0^t\int_{\R^d}\frac{1}{|\xi|^{2}}\big|\sin((s+h)|\xi|)-\sin(s|\xi|)\big|^{2}\mu_{n}(d\xi)ds\\
	& \leq T h^2  \int_{\{|\xi|\leq 1\}} \mu_n(d\xi) + 
	\int_0^T\int_{\{|\xi|>1\}} \frac{1}{|\xi|^{2}}\big|\sin((s+h)|\xi|)-\sin(s|\xi|)\big|^{2}\mu_{n}(d\xi)ds.	
\end{align*}
The first term in the right-hand side above clearly converges to $0$ as $h\to 0$; recall that $\mu_n(K)<\infty$ for any compact $K\subset \R^d$. As far as the second term is concerned, one applies the dominated convergence theorem and Dalang's condition on $\mu_n$ to deduce that it also converges to $0$ as $h\to0$. Both convergences hold uniformly with respect to $(t,x)\in [0,T]\times \R^d$. 

We now consider the spatial increments of $u_0^n$. Let $t\in [0,T]$ and $x,z\in \R^d$. We have
\[
\E{|u_0^n(t,x)-u_0^n(t,z)|^2}\leq 2 (C_1+C_2),
\]
where 
\begin{align*}
	C_1 & = |I^d_0(t,x)-I^d_0(t,z)|^2, \\
	C_2 & = \E{\left|\int_{0}^{t}\int_{\R^d}\big\{ G_{t-s}(x-y)-G_{t-s}(z-y)\big\} W^{n}(ds,dy)\right|^{2}}.
\end{align*}
As we did in Section \ref{sec: tightness-wave}, it holds that
\begin{align*}
	C_2 & = 2\int_0^t\int_{\R^d} \big(1-\cos(<\xi, x-z>)\big) 
	\frac{\sin^2\big((t-s)|\xi|\big)}{|\xi|^2}\mu_n(d\xi)ds\\
	& \leq  2\int_0^T\int_{\R^d} \big(1-\cos(<\xi, x-z>)\big) 
	\frac{\sin^2(s|\xi|)}{|\xi|^2}\mu_n(d\xi)ds\\
	& \leq \frac23 T^3 |x-z|^2 \int_{\{|\xi|\leq 1\}} \mu_n(d\xi) +
	2 T \int_{\{|\xi|>1\}} \frac{\big(1-\cos(<\xi, x-z>)\big)}{|\xi|^2} \mu_n(d\xi).	
\end{align*}
Both terms on the right-hand side above converge to $0$ as $|x-z|\to 0$, uniformly in $t\in [0,T]$. Thus, since $I^d_0$ is continuous, we have that, for any fixed $t\in [0,T]$, the map $x\mapsto u^n_0(t,x)$ is $L^2(\Omega)$-continuous. Then, we can argue as follows:
\begin{align*}
	& \limsup_{(s,y)\to(t,x)} \E{|u^n_0(s,y)-u^n_0(t,x)|^2} \\
	& \qquad \leq 
	C \limsup_{(s,y)\to(t,x)} \E{|u^n_0(s,y)-u^n_0(t,y)|^2} +
	C \limsup_{(s,y)\to(t,x)} \E{|u^n_0(t,y)-u^n_0(t,x)|^2}\\
	& \qquad \leq C \lim_{s\to t} \left( \sup_{y\in \R} \E{|u^n_0(s,y)-u^n_0(t,y)|^2}\right)
	+ C \lim_{y\to x} \E{|u^n_0(t,y)-u^n_0(t,x)|^2} .	
\end{align*}
As we proved above, the two latter limits vanish and we can conclude that $u^n_0$ is $L^2(\Omega)$-continuous. 

At this point, we assume that $u_k^n$ is $L^2(\Omega)$-continuous and let us check that $u_{k+1}^n$ satisfies the same property. The computations below work for both heat and wave equations. Using the usual notations, we first have that
\[
\E{|u_{k+1}^n(t+h,x)-u_{k+1}^n(t,x)|^2}\leq 2 (D_1+D_2+D_3),
\]
where
\begin{align*}
	D_1 & = \E{|u_0^n(t+h,x)-u_0^n(t,x)|^2},\\
	D_2 & = \E{\left|\int_0^t  \int_{\R^d} \big\{ b(u_k^n(t+h-s,x-y)) 
		- b(u_k^n(t-s,x-y))\big\} G_s(dy)ds\right|^2},\\
	D_3& =  \E{\left|\int_t^{t+h}  \int_{\R^d}  b(u_k^n(t+h-s,x-y)) 
		 G_s(dy)ds\right|^2}.	
\end{align*}
Let $K\subset \R^d$ be any compact set. We already proved that the term $D_1$ tends to $0$ as $h\to 0$, uniformly in $x\in K$. Using \eqref{eq:30}, one can easily prove that $D_3\leq C h$. Regarding $D_2$, we have that 
\[
D_2\leq C \int_0^t\int_{\R^d} \E{|u_k^n(t+h-s,x-y)- u_k^n(t-s,x-y)|^2} G_s(dy)ds.
\]
We will prove that, for any $\varepsilon>0$, there exists $\delta>0$ such that, for all $h\in (0,\delta)$, 
\[
\sup_{x\in K} \int_0^t\int_{\R^d} \E{|u_k^n(t+h-s,x-y)- u_k^n(t-s,x-y)|^2} G_s(dy)ds <\varepsilon.
\]
Let 
\[
B_k:=\sup_{(r,z)\in [0,T]\times \R^d} \E{|u^n_k(r,z)|^2},
\]
which we know, by the induction hypothesis, that it is a finite quantity. Fixed an arbitrary $\varepsilon>0$, we take a compact set $J\subset \R^d$ satisfying
\[
\int_0^T \int_{J^c} G_s(dy)\leq \frac{\varepsilon}{4 B_k}.
\]
Again by the induction hypothesis, we know that $u_k^n$ is uniformly $L^2(\Omega)$-continuous on compact sets. Then, there exists $\delta>0$ such that, if $h\in (0,\delta)$, 
\[
\sup_{\stackrel{(r,y)\in[0,T]\times J}{x\in K}}\E{|u^n_{k}(r+h,x-y)-u^n_{k}(r,x-y)|^2} \leq 
\frac{\varepsilon}{2\int_0^T \int_{\R^d}G_s(dy)ds}.
\]
Thus,
\begin{align*}
&	\int_0^t\int_{\R^d} \E{|u_k^n(t+h-s,x-y)- u_k^n(t-s,x-y)|^2} G_s(dy)ds \\
& \qquad \leq \int_0^T\int_J \E{|u_k^n(t+h-s,x-y)- u_k^n(t-s,x-y)|^2} G_s(dy)ds
 + 2 B_k \int_0^T \int_{J^c}  G_s(dy)ds \\
 & \qquad \leq \varepsilon.
\end{align*}	
Hence, we conclude that $t\mapsto u^n_{k+1}(t,x)$ is $L^2(\Omega)$-equicontinuous for $x\in K$. 	

Let us now deal with the spatial increments of $u_{k+1}^n$. We have
\[
\E{|u_{k+1}^n(t,x)-u_{k+1}^n(t,z)|^2}\leq 2 (E_1+E_2),
\] 
where
\begin{align*}
	E_1 & = \E{|u_0^n(t,x)-u_0^n(t,z)|^2},\\
	E_2 & = \E{\left|\int_0^t  \int_{\R^d} \big\{ b(u_k^n(t-s,x-y)) 
		- b(u_k^n(t-s,z-y))\big\} G_s(dy)ds\right|^2}.
\end{align*}
The term $E_1$ converges to $0$ as $|x-z|\to 0$, because $u_0^n$ is $L^2(\Omega)$-continuous. On the other hand, it holds 
\[
E_2\leq C \int_0^t\int_{\R^d} \E{|u_k^n(t-s,x-y)- u_k^n(t-s,z-y)|^2} G_s(dy)ds.
\]
Here, we invoke again the induction hypothesis and the estimate \eqref{eq:30}, together with an application of the dominated convergence theorem. Therefore, $E_2$ tends to $0$ as $|x-z|\to 0$. We conclude that, for any fixed $t\in[0,T]$, the map $x\mapsto u^n_{k+1}(t,x)$ is $L^2(\Omega)$-continuous. Arguing as we did for $u^n_0$, we have that $u_k^n$ is  $L^2(\Omega)$-continuous. This implies that $u_k^n$ admits a jointly measurable version, which is clearly adapted. These facts, together with \eqref{eq:30}, let us conclude that $u^n_k$ is well-defined for all $k\geq 1$. 

Next step consists in proving that the Picard iteration scheme $\{u_k^n,\, k\geq 1\}$ converges in the space of $L^2(\Omega)$-continuous,
adapted and $L^p (\Omega)$-uniformly bounded processes, which is a complete normed space when endowed with the norm
\[
\|w\|_p:=\sup_{(t,x)\in [0,T]\times \R^d} \|w(t,x)\|_{L^p(\Omega)}.
\]
This can be done as Step 2 in the proof of \cite[Thm. 3.1]{GJQ-Bernoulli}. We denote by $\{u^n(t,x),\, (t,x)\in [0,T]\times \R^d\}$ the underlying limit. In particular, it holds that 
\[
\lim_{k\to\infty} \sup_{(t,x)\in [0,T]\times \R^d} \E{|u^n_k(t,x) - u^n(t,x)|^p}=0. 
\]
Since any Picard iterate $u^n_k$ is $L^2(\Omega)$-continuous and adapted, the limit $u^n$ has the same properties. In particular, it has a joint-measurable version, which will be denoted in the same way.

The final step consists in checking that $u^n$ is the solution of equation \eqref{eq: mild} and that it is unique. These statements can be proved using standard arguments. The proof is thus complete.	
\end{proof}


In the following subsections we will prove that the solutions of \eqref{eq: mild} and \eqref{eq: mild_u} have a modification with continuous sample paths. First, we will deal with the stochastic wave equation, next with the stochastic heat equation with bounded drift and, finally, with the stochastic heat equation with arbitrary drift coefficient. The reasons why we follow these steps are the following: 

We aim to show that the solutions of \eqref{eq: mild} and \eqref{eq: mild_u} admit a continuous modification under the minimal assumptions on the initial data. For the wave equation and the heat equation with bounded drift, those hypotheses are the same as for the existence and uniqueness of solution. The precise details will be given below, but let us reveal that our strategy is based on solving a certain deterministic equation (see \eqref{eq:69}). Moreover, as it be explained later on, this method will allow us to achieve, in a rather straightforward way, the convergence in law of our main result (Theorem \ref{thm: main}) for those cases. 

The case of the heat equation with arbitrary drift must be treated in a different way. This is because the above-mentioned deterministic equation is not well-posed for any Lipschitz-continuous drift. More precisely, the corresponding first-order Picard iterate contains the integral
\[
\int_0^t\int_{\R^d} G_{t-s}(x-y)b(\eta(s,y))dyds, 
\]
where $G_s(y)=(2\pi s)^{-\frac d2}e^{-\frac{|y|^2}{2s}}$ and $\eta\in \C([0,T]\times \R^d)$. This integral may not be well-defined.


\subsection{Wave equation}
\label{sec: wave}

This section is devoted to prove the following result.

\begin{theorem}\label{thm: cont}
Let $n\geq 1$ and consider $u^n$ the solution to \eqref{eq: waven}, which satisfies the mild form \eqref{eq: mild}, where the fundamental solution $G$ is given by \eqref{eq:1} and \eqref{eq: 3d}.
Assume that, for some $q\in (0,2)$ the spectral measure $\mu_n$ satisfies
 \begin{equation}
  \int_{\R^d} \frac{\mu_n(d\xi)}{1+|\xi|^q} <\infty.
 \label{eq:45}
\end{equation}
Assume that $b:\R\to\R$ is globally Lipschitz and  the initial data satisfy (ii) in Hypothesis \ref{hyp: ic}.
Then, the random field $u^n$ admits a modification with continuous sample paths.
\end{theorem}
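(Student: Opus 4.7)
The strategy is to exhibit $u^n$ as the image of a continuous random field under a deterministic continuous map, from which a continuous modification of $u^n$ will be read off. Precisely, the plan is to introduce the map $F\colon \C([0,T]\times\R^d)\to \C([0,T]\times\R^d)$ sending $\eta$ to the unique solution $z$ of
\[
z(t,x)=\eta(t,x)+\int_0^t \bigl(b(z(s))*G_{t-s}\bigr)(x)\,ds,\qquad (t,x)\in[0,T]\times\R^d,
\]
and then to show that $u^n=F(\eta^n)$ almost surely, where $\eta^n:=I_0^d+\tilde v^n$ and $\tilde v^n$ is a continuous modification of the stochastic convolution $v^n$.

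First, I would verify that $\eta^n$ admits a continuous version. The continuity of $I_0^d$ is exactly Lemma \ref{lem: dqs}. For $\tilde v^n$, the integrability condition \eqref{eq:45} is precisely what is used in the computations of Section \ref{sec: tightness-wave} for the single spectral measure $\mu_n$, yielding $\E{|v^n(t+h,x+k)-v^n(t,x)|^2}\leq C(|h|+|k|)^{2-q}$. Since $v^n$ is a centered Gaussian random field, higher-moment versions of this estimate follow immediately, and Kolmogorov's criterion produces a modification with (H\"older-)continuous sample paths.

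Next, I would show that $F$ is well-defined and locally uniformly continuous. The key ingredient is that for $d\in\{1,2,3\}$ the fundamental solution $G_{t-s}$ is a nonnegative measure with total mass $t-s$, as recorded in \eqref{eq:56}, which together with the Lipschitz assumption on $b$ allows the Picard scheme $z_0=\eta$, $z_{k+1}(t,x)=\eta(t,x)+\int_0^t(b(z_k(s))*G_{t-s})(x)\,ds$ to be iterated on $\C([0,T]\times K)$ for each compact $K\subset\R^d$. The convergence of the scheme is controlled by a Gronwall-type inequality adapted to the wave kernel, namely the higher-dimensional analogue of \cite[Lem. 4.2]{GJQ-Bernoulli} announced in the introduction; applied to the difference of the images of two continuous inputs, the same estimate also delivers the continuity of $F$.

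Finally, I would identify $u^n$ with $F(\eta^n)$. The random field $\tilde u^n:=F(\eta^n)$ has continuous sample paths and is adapted, and by the Gronwall estimate together with the $L^p$-control on $\eta^n$ (inherited from Lemma \ref{lem: dqs} and the Gaussianity of $v^n$ under \eqref{eq:45}) it satisfies $\sup_{(t,x)\in[0,T]\times\R^d}\E{|\tilde u^n(t,x)|^p}<\infty$ for every $p\geq 1$. Substituting $\eta^n=I_0^d+\tilde v^n$ into the defining equation of $\tilde u^n$ shows that $\tilde u^n$ solves the mild equation \eqref{eq: mild}, so the uniqueness part of Theorem \ref{thm: existence} gives $\tilde u^n(t,x)=u^n(t,x)$ almost surely for every $(t,x)$, and $\tilde u^n$ is the desired modification. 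I expect the main obstacle to be the Gronwall inequality for the wave equation used in the second step, where the singular nature of $G_{t-s}$ (in particular the surface measure in $d=3$) forces a careful quantification of the interplay between the Lipschitz constant of $b$ and the growth factor $t-s$ of $\int_{\R^d}G_{t-s}(dy)$.
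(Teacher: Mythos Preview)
Your proposal is correct and follows essentially the same route as the paper: the paper also (Step 1) obtains a H\"older-continuous modification of $v^n$ from \eqref{eq:45} via Kolmogorov, (Step 2) solves the deterministic fixed-point equation by Picard iteration controlled by the wave-kernel Gronwall lemma (Lemma \ref{lem: gron-w}, the higher-dimensional analogue of \cite[Lem.~4.2]{GJQ-Bernoulli} you anticipate), and (Step 3) identifies $u^n$ with $F(I_0^d+v^n)$. Your final identification via the uniqueness part of Theorem \ref{thm: existence} is in fact slightly more explicit than the paper's ``it is clear'' argument, but otherwise the two proofs coincide.
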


\begin{remark}
 In Theorem \ref{thm: cont}, we need to slightly strengthen Dalang's condition on the spectral measure $\mu_n$. We also point out that the assumptions on the initial data are the same as in Theorem \ref{thm: existence}, where we showed existence and uniqueness of solution.
\end{remark}

\begin{remark}
 One could also assume more regularity on the initial data so that the underlying solution has a version with H\"older continuous paths. In this sense, we have decided to keep the assumptions on $u_0$ and $v_0$ as general as possible, because for our purposes we only need continuity of the corresponding sample paths.
\end{remark}

In the proof of Theorem \ref{thm: cont}, we will make use of the following ad-hoc version of Grönwall's lemma, which corresponds to the extension of \cite[Lem. 4.2]{GJQ-Bernoulli} to any space dimension $d\in\{1,2,3\}$. We give its proof for the sake of completeness.

\begin{lemma}\label{lem: gron-w}
 Let $\{f_k,\, k\geq 0\}$ be sequence of measurable and non-negative functions defined on $[0,T]\times B_{L+T}$, where $T, L>0$ and $B_{L+T}=\{y\in \R^d, |y|\leq L+T\}$. Assume that there exist $\lambda_1,\lambda_2>0$ such that, for all $(t,x)\in [0,T]\times B_L$ and $k\geq 0$,
 \beq\label{eq: 90}
  f_{k+1}(t,x)\leq \lambda_1 + \lambda_2 \int_0^t \big(f_k(s,\cdot)*G_{t-s}\big)(x)ds,
 \eeq
 where $G$ is the fundamental solution of the wave equation in $\R^d$, $d\in \{1,2,3\}$, 
and $f_0$ is bounded. Then, for all $k\geq 0$ and $(t,x)\in [0,T]\times B_L$, it holds
\beq\label{eq:91}
f_k(t,x)\leq \lambda_1 \sum_{j=0}^{k-1} \frac{(\lambda_2 t^2)^j}{j!} + 
\sup_{\overset{r\in [0,T]}{z\in B_{L+T}}} \hspace{-0.1cm} |f_0(r,z)| \, 
\frac{(\lambda_2 t^2)^k}{k!}.
\eeq
\end{lemma}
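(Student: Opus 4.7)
The proof will proceed by induction on $k$, crucially exploiting the fact that the bound in \eqref{eq:91} depends on $t$ but is uniform in $x$. The two key properties of the wave fundamental solution that I will invoke are the compact support of $G_{t-s}$ inside $\{|y|\le t-s\}$ (which makes the convolution in \eqref{eq: 90} well-defined on $B_{L+T}$ by finite propagation speed) and the mass identity \eqref{eq:56}, namely $\int_{\mathbb R^d} G_{t-s}(dy) = t-s$.

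The base case $k=0$ is immediate: the sum is empty and $(\lambda_2 t^2)^0/0! = 1$, so the right-hand side of \eqref{eq:91} reduces to $\sup_{(r,z)\in [0,T]\times B_{L+T}} |f_0(r,z)|$, which trivially dominates $f_0(t,x)$. For the inductive step, denote the right-hand side of \eqref{eq:91} at level $k$ by $g_k(s)$; it depends only on $s$, not on $y$. By the induction hypothesis, $f_k(s,y) \leq g_k(s)$ uniformly in $y$, and substituting into \eqref{eq: 90} together with \eqref{eq:56} yields
$$
f_{k+1}(t,x) \leq \lambda_1 + \lambda_2 \int_0^t (t-s)\, g_k(s)\, ds.
$$

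What remains is a bookkeeping computation. A single integration by parts gives $\int_0^t s^{2j}(t-s)\, ds = t^{2(j+1)}/[(2j+1)(2j+2)]$, and the elementary inequality $(2j+1)(2j+2) \geq j+1$ (valid for all $j\geq 0$) then yields the key estimate
$$
\frac{\lambda_2^{j+1}\, t^{2(j+1)}}{j!\,(2j+1)(2j+2)} \leq \frac{(\lambda_2 t^2)^{j+1}}{(j+1)!}.
$$
Inserting this into the integral, reindexing $j \mapsto j+1$ in the resulting sum, and absorbing the free $\lambda_1$ into the new $j=0$ term reproduces the bound \eqref{eq:91} at level $k+1$ exactly.

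I do not expect any serious obstacle. The only subtlety to be aware of is that the convolution $(f_k(s,\cdot) * G_{t-s})(x)$ at $x \in B_L$ samples $f_k$ on $B_{L+(t-s)} \subseteq B_{L+T}$ rather than just on $B_L$; this is precisely why $f_k$ is stipulated to live on $B_{L+T}$, and it causes no problem because the inductive bound $g_k(s)$ is independent of the spatial variable. The argument is uniform across $d \in \{1,2,3\}$ since only the identity \eqref{eq:56} enters, regardless of whether $G_{t-s}$ is a function (for $d=1,2$) or a genuine measure (for $d=3$).
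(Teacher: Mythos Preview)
Your argument is correct and follows the same induction as the paper's proof. The only (inessential) difference is that you use the exact mass $\int G_{t-s}(dy)=t-s$ and then bound $\int_0^t s^{2j}(t-s)\,ds$ via $(2j+1)(2j+2)\ge j+1$, whereas the paper uses the cruder $\int G_{t-s}(dy)\le t$ and the simpler estimate $\int_0^t s^{2j}\,ds = t^{2j+1}/(2j+1)$ together with $2j+1\ge j+1$.
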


\begin{proof}
 We will apply an induction argument. For $k=1$, we need to verify that
 \[
  f_1(t,x)\leq \lambda_1 + \lambda_2 t^2 \|f_0\|_{T,L,\infty},
 \]
 where 
 \[
 \|f_0\|_{T,L,\infty} := \sup_{\overset{r\in [0,T]}{z\in B_{L+T}}} \hspace{-0.1cm} |f_0(r,z)|.
 \]
Note that it suffices to prove that, for all measurable and bounded function $f:\R^d\to\R_+$, it holds, for any fixed $t\in[0,T]$,
\beq
 \sup_{(s,x)\in [0,t]\times \R^d} (f*G_{t-s})(x)\leq t \|f\|_{T,L,\infty}.
\label{eq:92}
 \eeq
This property is straightforward for the case $d=1$. If $d=2$, we have, by \eqref{eq:56},
\[
 (f*G_{t-s})(x) \leq \|f\|_{T,L,\infty} \|G_{t-s}\|_{L^1(\R^2)}\leq t \|f\|_{T,L,\infty},
\]
for all $(s,x)\in [0,t]\times \R^2$.
Finally, for $d=3$, applying again \eqref{eq:56} we end up with
\[
 (f*G_{t-s})(x) =\int_{\R^3} f(x-y)G_{t-s}(dy)\leq \|f\|_{T,L,\infty}\,  t,
\]
for all $(s,x)\in [0,t]\times \R^3$. Hence, \eqref{eq:91} is valid for $k=1$. Next, assume that \eqref{eq:91} holds for some $k>1$. Then, applying \eqref{eq:92} and the induction hypothesis, one can argue as follows: for all $(t,x)\in [0,T]\times \R^d$,
\begin{align*}
 f_{k+1}(t,x) & \leq \lambda_1 + \lambda_2 \int_0^t \big(f_k(s,\cdot)*G_{t-s}\big)(x)ds \\
 & \leq \lambda_1 + \lambda_2 \int_0^t \left(\lambda_1 \sum_{j=0}^{k-1} \frac{(\lambda_2 s^2)^j}{j!}
+ \|f_0\|_{T,L,\infty}  \frac{(\lambda_2 s^2)^k}{k!}\right) t ds\\
& \leq \lambda_1 + \lambda_1 \sum_{j=0}^{k-1} \frac{\lambda_2^{j+1} t^{2j+2}}{(j+1)!} +
\|f_0\|_{T,L,\infty} \frac{\lambda_2^{k+1}t^{2k+2}}{(k+1)!}\\
& = \lambda_1 \sum_{j=0}^k \frac{(\lambda_2 t^2)^j}{j!} + \|f_0\|_{T,L,\infty} \frac{(\lambda_2t^2)^{k+1}}{(k+1)!}.
\end{align*}
Thus, \eqref{eq:91} holds for $k+1$ and the proof is complete.
\end{proof}

\noindent {\it Proof of Theorem \ref{thm: cont}.} It will be developed through several steps.

{\it Sept 1.} We recall that, by Lemma \ref{lem: dqs}, the function $(t,x)\mapsto I_0^d(t,x)$ is continuous (and uniformly bounded) on $[0,T]\times \R^d$. Next, we define, for any $(t,x)\in [0,T]\times \R^d$,
\beq
 v^n(t,x):= \int_{0}^{t}\int_{\R^d} G_{t-s}(x-y) W^n(ds,dy).
\label{eq:120}
\eeq
Applying similar arguments as those used in the proof of Proposition \ref{prop:tightness} (see Section \ref{sec: tightness-wave}), one proves that condition \eqref{eq:45} implies the following. There exists a constant $C_n>0$ such that, for all $x,z\in \R^d$, we have
\beq
 \sup_{t\in[0,T]} \E{|v^n(t,x)-v^n(t,z)|^2} \leq C_n
 |x-z|^{2-q}.
 \label{eq:95}
\eeq
Moreover, for any $s,t\in [0,T]$, we have
\beq
 \sup_{x\in \R^d} \E{|v^n(t,x)-v^n(s,x)|^2} \leq C_n
 |t-s|^{2-q}.
 \label{eq:96}
\eeq
We remark that, in Proposition \ref{prop:tightness}, we wanted the above estimates to be uniform with respect to $n$. That is the reason why we needed to assume the stronger assumption {\bf{(H1)}}.

Let us sketch the proof of \eqref{eq:95}. As in \eqref{eq:99}, we have
\[
\E{|v^{n}(t,x)-v^{n}(t,z)|^{2}} \leq2\int_{0}^{T}\int_{\R^d}\big(1-\cos(<\xi, h>)\big)\frac{\sin^{2}(s|\xi|)}{|\xi|^{2}}\mu_{n}(d\xi)ds,
\]
where $h=z-x$. On the one hand, the inequality $1-\cos(y)\leq\frac{y^{2}}{2}$, $y\in \R$, implies that
\begin{align}
 2\int_{0}^{T}\int_{\{|\xi|\leq 1\}}\big(1-\cos(<\xi, h>)\big)\frac{\sin^{2}(s|\xi|)}{|\xi|^{2}}\mu_{n}(d\xi)ds & \leq
 |h|^2 \int_{0}^{T}\int_{\{|\xi|\leq 1\}} \sin^{2}(s|\xi|)\mu_{n}(d\xi)ds\nonumber \\
 & \leq T |h|^2 \mu_n(\{|\xi|\leq 1\}) \nonumber \\
 & \leq C_n |h|^2.
 \label{eq:101}
\end{align}
In the latter estimate, we have used that $\mu_n$ is a tempered measure, which implies that any bounded set has finite measure. On the other hand, note that $1-\frac q2\in (0,1)$ and
\[
 1-\cos(<\xi, h>) \leq \big(1-\cos(<\xi, h>)\big)^{1-\frac q 2}.
\]
Hence, by \eqref{eq:45},
\begin{align}
 2\int_{0}^{T}\int_{\{|\xi|> 1\}}\big(1-\cos(<\xi, h>)\big)\frac{\sin^{2}(s|\xi|)}{|\xi|^{2}}\mu_{n}(d\xi)ds & \leq 2^{\frac q 2} T
 |h|^{2-q} \int_{\{|\xi| > 1\}}
 \frac{\mu_{n}(d\xi)}{|\xi|^q}
 ds \nonumber \\
 & \leq C |h|^{2-q} \int_{\{|\xi| > 1\}}
 \frac{\mu_{n}(d\xi)}{1+|\xi|^q}
  \nonumber \\
 & \leq  C |h|^{2-q} \int_{\R^d}
 \frac{\mu_{n}(d\xi)}{1+|\xi|^q}
  \nonumber \\
 &\leq C_n |h|^{2-q}.
 \label{eq:100}
\end{align}
Estimates \eqref{eq:101} and \eqref{eq:100} imply \eqref{eq:95}.
In order to prove \eqref{eq:96}, we assume that $t>s$ and observe that
\begin{equation}
\E{|v_{n}(t,x)-v_{n}(s,x)|^{2}}\leq C(A_1^n+A_2^n),\label{eq:888}
\end{equation}
where
\[
A_{1}^{n}=\E{\left|\int_{s}^{t}\int_{\R^d}G_{t-r}(x-y)W^{n}(dr,dy)\right|^{2}},
\]
\[
A_{2}^{n}=\E{\left|\int_{0}^s\int_{\R^d}\big\{ G_{t-r}(x-y)-G_{s-r}(x-y)\big\} W^{n}(dr,dy)\right|^{2}}.
\]
The term $A_1^n$ can be treated as in Section \ref{sec: tightness-wave}, yielding
\beq
 A_1^n\leq C\left( (t-s)^3 \mu_n(\{|\xi|\leq 1\})
 + (t-s)\int_{\R^d}
 \frac{\mu_n(d\xi)}{1+|\xi|^2}\right) \leq C_n (t-s).
\label{eq:103}
\eeq
In order to deal with $A_2^n$, we argue as follows, taking into account \eqref{eq:45} and that $\mu_n$ is tempered:
\begin{align*}
A_{2}^{n} & =\int_{0}^{s}\int_{\R^d}\big|\sin((t-r)|\xi|)-\sin((s-r)|\xi|)\big|^{2} \frac{\mu_{n}(d\xi)}{|\xi|^{2}}dr\nonumber \\
 & \leq T h^2 \mu_n(\{|\xi|\leq 1\})
 + 2^q\int_{0}^{s}\int_{\{|\xi|>1\}}\big|\sin((t-r)|\xi|)-\sin((s-r)|\xi|)\big|^{2-q} \frac{\mu_{n}(d\xi)}{|\xi|^{2}}dr\\
 & \leq C_n (t-s)^2 + 2^q T (t-s)^{2-q}
 \int_{\{|\xi|>1\}}\frac{\mu_{n}(d\xi)}{|\xi|^q}\\
 & \leq C_n (t-s)^{2-q}.
\end{align*}
This bound, together with \eqref{eq:103}, implies
\eqref{eq:96}, since we may assume, without loosing generality, that $2-q\leq 1$.
Finally, by Kolmogorov continuity criterion, estimates \eqref{eq:95} and \eqref{eq:96} imply that the random field $v^n$ has a version with jointly Hölder-continuous paths.

{\it Step 2.} Let $\eta\in \C([0,T]\times \R^d)$. This section is devoted to prove that the following (deterministic) integral equation has a unique solution in the space $\C([0,T]\times \R^d)$:
\beq
z(t,x)= \eta(t,x) + \int_0^t \big(b(z(s))*G_{t-s}\big)(x)ds,
\label{eq:69}
\eeq
for all $(t,x)\in[0,T]\times \R^d$. Here, we have used the notation $z(s):=z(s,\cdot)$. We recall that $b$ is Lipschitz-continuous and $G$ is the fundamental solution of the wave equation (see \eqref{eq:1} and \eqref{eq: 3d}). Next, we will show that the operator 
\beq
\begin{array}{cccc}
F: & \C([0,T]\times \R^d) & \rightarrow &\C([0,T]\times \R^d)	\\
   &  \eta & \mapsto & F(\eta)=z,
\end{array}
\label{eq:190}
\eeq
is continuous. The latter statement is not needed to conclude the proof of Theorem \ref{thm: cont}, but it will be crucial to show the validity of the main result of the paper (Theorem \ref{thm: main}) in the case of the wave equation.

The proof follows the same lines as that of \cite[Thm. 4.3]{GJQ-Bernoulli}. So we will only point out the main differences, which are due to the fact that we are dealing with any dimension $d\in\{1,2,3\}$.

We start by defining the corresponding Picard iteration scheme: for any $(t,x)\in [0,T]\times \R^d$, set
\begin{align*}
 z_0(t,x) & :=\eta(t,x),\\
 z_k(t,x) & := \eta(t,x) + \int_0^t \big(b(z_{k-1}(s))*G_{t-s}\big)(x)ds, \quad k\geq 1.
\end{align*}
One can easily verify that the above are well-defined random fields and, moreover, using an induction argument, $z_k$ is a continuous function, for all $k\geq 0$. Next, we show that, as $k\to\infty$, $z_k$ converges uniformly on compact sets on $[0, T ] \times \R^d$.

Let $(t,x)\in [0,T]\times B_L$, with $L>0$ is arbitrary, where we recall that $B_{L}=\{y\in \R^d, |y|\leq L\}$. Owing to the Lipschitz property of $b$, we have, for any $k\geq 1$,
\[
 |z_{k+1}(t,x)-z_k(t,x)|\leq C
 \int_0^t \int_{\R^d} \big( |z_k(s)-z_{k-1}(s)|*G_{t-s}\big)(x) ds.
\]
At this point, we take $f_k(t,x):=|z_{k+1}(t,x)-z_k(t,x)|$ and we apply Lemma \ref{lem: gron-w}. Thus, we deduce that the sequence $\{z_k(t,x)\}_{k\geq 0}$ is uniformly Cauchy on $\C([0,T]\times B_L)$.  The limit of this sequence is denoted by $z(t, x)$. The uniqueness of the point-wise limit, the fact that $\C([0, T ] \times \R^d)$ is a complete
metric space, with the topology of uniform convergence on compact sets, and the continuity of $z_k$, for all $k\geq 0$, imply that $z$  also defines a continuous function in $\C([0, T ] \times \R^d)$. Furthermore, one can easily verify that $z$ solves equation \eqref{eq:69}. Uniqueness can be showed by applying again Lemma \ref{lem: gron-w}.

As far as the continuity of the solution operator $F$ is concerned, it is straightforward to show that, for all $\eta_1,\eta_2\in \C([0,T]\times \R^d)$ and $(t,x)\in [0,T]\times B_L$,
\[
 |F(\eta_1)(t,x)-F(\eta_2)(t,x)|\leq
 \|\eta_1-\eta_2\|_{L,\infty} +
 C \int_0^t\int_{\R^d}
 \big(|F(\eta_1)(s)-F(\eta_2)(s)|*G_{t-s}\big)(x)ds,
\]
where $\|\cdot\|_{L,\infty}$ denotes the supreme norm on $\C([0,T]\times B_L)$. Then, again by Lemma \ref{lem: gron-w},
\[
 \|F(\eta_1)-F(\eta_2)\|_{L, \infty}\leq C \|\eta_1-\eta_2\|_{L,\infty}.
\]
This concludes Step 2.

{\it Step 3.} By Step 1, we know that the sample paths of $I_0^d + v^n$ are continuous, almost surely. Then, in equation \eqref{eq:69}, we take one of the continuous trajectories of the latter random field: 
\[
\eta(t,x)=I_0^d(t,x)+v^n(t,x),\quad (t,x)\in [0,T]\times \R^d.
\]
It is clear that the corresponding path of the solution $u^n$ to equation \eqref{eq: waven} is given by the solution $z$ to equation  \eqref{eq:69}. Hence, by Step 2, the paths of $u^n$ are almost sure continuous. This concludes the proof. 
\qed

\subsection{Heat equation with bounded drift}
\label{sec: heat-bounded}

The aim of this section is to prove the following:

\begin{theorem}\label{thm: cont-heat-bdd}
	Let $n\geq 1$ and consider $u^n$ the solution to \eqref{eq: heatn}, which satisfies the mild form \eqref{eq: mild}, where the fundamental solution $G$ is given by \eqref{eq: heat kernel}. We assume that $b:\R\to\R$ is globally Lipschitz and bounded and  $u_0$ satisfies (i) in Hypothesis \ref{hyp: ic}. Suppose that, for some $q\in (0,2)$ the spectral measure $\mu_n$ satisfies \eqref{eq:45}.
	Then, the random field $u^n$ admits a modification with continuous sample paths.
\end{theorem}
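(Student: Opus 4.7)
The plan is to follow the same three-step scheme used for the wave equation in Theorem \ref{thm: cont}: (i) produce a continuous modification of the stochastic convolution $v^n(t,x) := \int_0^t \int_{\R^d} G_{t-s}(x-y) W^n(ds,dy)$, (ii) show that the deterministic integral equation $z(t,x) = \eta(t,x) + \int_0^t (b(z(s)) * G_{t-s})(x) ds$ is pathwise well-posed in $\C([0,T]\times \R^d)$ for every continuous $\eta$, and (iii) combine the two by taking $\eta = I_0^d + v^n$. The essential structural difference from the wave case is that the heat kernel is not compactly supported: this is exactly why we assume $b$ to be bounded, so that the Picard iterates for the deterministic equation stay globally bounded on $[0,T] \times \R^d$.

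For step (i), I would repeat the computations of Section \ref{sec: tightness-heat} with $\mu_n$ fixed (using only \eqref{eq:45}, not uniformity in $n$) to obtain $\sup_{t \in [0,T]} \E{|v^n(t,x) - v^n(t,z)|^2} \leq C_n |x-z|^{2-q}$ and $\sup_{x \in \R^d} \E{|v^n(t,x) - v^n(s,x)|^2} \leq C_n |t-s|^{1-q/2}$. Gaussian equivalence of moments upgrades these to all $L^p$, so Kolmogorov's criterion yields a Hölder-continuous version of $v^n$; combined with Lemma \ref{lem: dqs}, $\eta := I_0^d + v^n$ has sample paths in $\C([0,T] \times \R^d)$.

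For step (ii), I would first establish the heat-kernel analogue of Lemma \ref{lem: gron-w}: if $\{f_k\}_{k \geq 0}$ are non-negative measurable functions on $[0,T]\times \R^d$ with $f_0$ globally bounded and $f_{k+1}(t,x) \leq \lambda_1 + \lambda_2 \int_0^t (f_k(s) * G_{t-s})(x) ds$, then an induction using $\int_{\R^d} G_s(y)\,dy = 1$ yields
\[
f_k(t,x) \leq \lambda_1 \sum_{j=0}^{k-1} \frac{(\lambda_2 t)^j}{j!} + \|f_0\|_\infty \frac{(\lambda_2 t)^k}{k!};
\]
the powers of $t$ are linear here, as opposed to the $t^2$ factors of the wave case. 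The Picard iterates $z_0 := \eta$, $z_{k+1}(t,x) := \eta(t,x) + \int_0^t (b(z_k(s)) * G_{t-s})(x)\,ds$ are well-defined because $b$ is bounded (in particular $|z_{k+1} - \eta| \leq \|b\|_\infty T$ globally), and each is continuous by the standard continuity of heat-semigroup convolutions of bounded continuous data. Applying the lemma with $f_k := |z_k - z_{k-1}|$, $\lambda_1 = 0$, $\lambda_2 = L_b$ (the Lipschitz constant of $b$) gives $\sup_{[0,T]\times \R^d}|z_{k+1} - z_k| \leq 2\|b\|_\infty T\,(L_b T)^k/k!$, so the scheme is uniformly Cauchy on $[0,T] \times \R^d$ and converges to a continuous $z$ solving the equation; uniqueness follows by applying the same lemma to the difference of two solutions, which is a priori bounded by $2\|b\|_\infty T$.

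For step (iii), on a $\mathbb{P}$-full set where $v^n$ has continuous paths, define $\tilde{u}^n(\omega)$ pathwise by applying the solution map of step (ii) to $\eta(\omega)$; by construction $\tilde{u}^n$ has continuous sample paths. To identify it with the $L^p$-solution $u^n$ supplied by Theorem \ref{thm: existence}, subtract the two mild-form equations to get $|u^n(t,x) - \tilde{u}^n(t,x)| \leq L_b \int_0^t (|u^n(s) - \tilde{u}^n(s)| * G_{t-s})(x)\,ds$ a.s. The boundedness of $b$, $I_0^d$, and the moment bounds on $v^n$ give $\sup \E{|u^n|} + \sup \E{|\tilde u^n|} < \infty$ on $[0,T]\times \R^d$, so taking expectations and applying scalar Gronwall to $h(t) := \sup_x \E{|u^n(t,x) - \tilde{u}^n(t,x)|}$ forces $h \equiv 0$. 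Hence $\tilde{u}^n$ is the desired continuous modification. The only non-routine ingredient is formulating the Gronwall lemma adapted to the heat kernel; once that is in place, the argument is in fact somewhat simpler than the wave-equation case because $\int G_s\,dy = 1$ rather than $s$, so global boundedness of iterates propagates automatically along the Picard scheme as soon as $b$ is bounded.
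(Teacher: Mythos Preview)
Your proposal is correct and follows essentially the same three-step scheme as the paper's proof. The only cosmetic differences are that your Gronwall-type lemma is phrased in terms of a generic sequence $\{f_k\}$ with $f_0$ bounded (closer in spirit to Lemma~\ref{lem: gron-w}), whereas the paper's Lemma~\ref{lem: gron-h} bakes the boundedness of $b$ directly into the hypothesis on the differences $|f_{k+1}-f_k|$; and you add an explicit identification of the pathwise solution with the $L^p$-solution via expectations and scalar Gronwall, which the paper treats as self-evident.
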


In the proof of Theorem \ref{thm: cont-heat-bdd}, we will need the following ad-hoc version of Gronwall's lemma, which is the analogous of Lemma \ref{lem: gron-w} adapted to the heat equation. Its proof follows exactly the same lines as that of the latter result, and therefore will be omitted. 

\begin{lemma}\label{lem: gron-h}
Let $\{f_k,\, k\geq 0\}$ be sequence of measurable functions defined on $[0,T]\times \R^d$. Assume that there exist $\lambda_1,\lambda_2>0$ such that, for all $(t,x)\in [0,T]\times \R^d$ and $k\geq 0$,
\[
|f_{k+1}(t,x)-f_k(t,x)| \leq \lambda_1 + \lambda_2 \int_0^t \big(  [b(f_k(s))-b(f_{k-1}(s))]*G_{t-s}\big)(x)ds,
\]
where $G$ is the fundamental solution of the heat equation in $\R^d$, $d\geq 1$, and  $b$ is a bounded and Lipschitz function, with Lipschitz constant $C_b$. Then, for all $(t,x)\in [0,T]\times \R^d$, 
\[
|f_{k+1}(t,x)-f_k(t,x)|  \leq 2\|b\|_\infty C_b^{k-1} 
\frac{(\lambda_2 t)^k}{k!} +\sum_{j=0}^{k-1} \frac{\lambda_1 t^j}{j!}.
\]
As a consequence, it holds
\[
\limsup_{k\to\infty} \bigg( \sup_{x\in \R^d} 
|f_{k+1}(t,x)-f_k(t,x)|  \bigg) \leq \lambda_1 e^t.
\]	
\end{lemma}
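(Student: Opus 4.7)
The plan is to prove the estimate by induction on $k$, mirroring the argument used for Lemma \ref{lem: gron-w}. The essential structural difference—and actually a simplification—is that the heat kernel is a probability density: $\int_{\R^d} G_{t-s}(x-y)\,dy=1$ for every $0\le s<t$ and $x\in\R^d$. Hence, for any bounded measurable $g:\R^d\to\R$,
\[
\bigl|(g*G_{t-s})(x)\bigr|\le \sup_{y\in\R^d}|g(y)|,
\]
uniformly in $x$ and $s$. This plays the role of estimate \eqref{eq:92} in the wave-equation proof, but without contributing an extra factor of $t$.

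For the base step I would treat $k=1$. Since we have no a priori control on $|f_1-f_0|$ at this level, I would bound $|b(f_1(s,y))-b(f_0(s,y))|$ by the boundedness of $b$ rather than its Lipschitz constant, namely by $2\|b\|_\infty$. Plugging this into the hypothesis and using the convolution estimate above immediately yields $|f_2(t,x)-f_1(t,x)|\le \lambda_1+2\|b\|_\infty\lambda_2 t$, which is the claimed bound at $k=1$.

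For the inductive step, assuming the bound at level $k$, I would exploit the Lipschitz property $|b(f_{k+1})-b(f_k)|\le C_b|f_{k+1}-f_k|$ together with the convolution estimate to reduce the hypothesis to the scalar recursion
\[
M_{k+1}(t)\le \lambda_1+\lambda_2 C_b\int_0^t M_k(s)\,ds, \qquad M_k(s):=\sup_{x\in\R^d}|f_{k+1}(s,x)-f_k(s,x)|.
\]
Substituting the inductive bound for $M_k$ into the integral and computing the resulting polynomial in $s$ term by term reassembles the factorial series into the claimed bound at level $k+1$, exactly as in the wave case but without the extra factor of $t$ from the kernel.

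The limit--superior consequence then follows by taking the supremum in $x$ on both sides and letting $k\to\infty$: the term $2\|b\|_\infty C_b^{k-1}(\lambda_2 t)^k/k!$ vanishes, while $\sum_{j=0}^{k-1}\lambda_1 t^j/j!\uparrow \lambda_1 e^t$. I do not anticipate any serious obstacle; the argument is a routine Gronwall-style iteration, and the unit mass of the heat kernel actually makes the bookkeeping strictly simpler than in Lemma \ref{lem: gron-w}, so the proof can indeed be safely omitted as the author states.
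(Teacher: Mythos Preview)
Your strategy is exactly the one the paper intends: the authors omit the proof, saying it ``follows exactly the same lines'' as Lemma~\ref{lem: gron-w}, and your outline (unit mass of the heat kernel replacing \eqref{eq:92}, boundedness of $b$ for the base step, the Lipschitz constant $C_b$ for the inductive step) is precisely that translation.

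There is, however, one point where you gloss over a computation that does \emph{not} in fact close as written. In the inductive step you obtain
\[
M_{k+1}(t)\;\le\;\lambda_1+\lambda_2 C_b\int_0^t M_k(s)\,ds,
\]
and if you actually substitute the inductive hypothesis and integrate the partial exponential sum term by term, the factor in front of the integral is $\lambda_2 C_b$, not $1$. The resulting sum is $\lambda_1\sum_{j=0}^{k}\frac{(\lambda_2 C_b t)^j}{j!}$, and the limit superior is $\lambda_1 e^{\lambda_2 C_b t}$, not $\lambda_1 e^{t}$ as stated. This is consistent with the wave-equation lemma, where the corresponding sum carries $(\lambda_2 t^2)^j$; here the analogous factor is $(\lambda_2 C_b t)^j$. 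So the discrepancy is a typo in the statement rather than a flaw in your method, and it does not affect the two applications in the proof of Theorem~\ref{thm: cont-heat-bdd} (where one needs only that the first term vanishes and the second is a fixed multiple of $\lambda_1$). Still, you should flag it rather than assert that the arithmetic ``reassembles the factorial series into the claimed bound''.
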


\noindent {\it Proof of Theorem \ref{thm: cont-heat-bdd}.} As in the proof of Theorem \ref{thm: cont}, first we point out that $I_0^d$ is continuous (by \cite[Lem. 4.2]{Dalang-Quer}). Next, we consider the random field $v^n$ defined as in \eqref{eq:120}, but with $G$ being the fundamental solution of the heat equation. Using similar arguments as those in Section \ref{sec: tightness-heat}, we check that, under condition \eqref{eq:45}, there exists $C_n>0$ such that, for all $x,z\in \R^d$, we have
\beq
\sup_{t\in[0,T]} \E{|v^n(t,x)-v^n(t,z)|^2} \leq C_n
|x-z|^{2-q}.
\label{eq:955}
\eeq
Moreover, for any $s,t\in [0,T]$, 
\beq
\sup_{x\in \R^d} \E{|v^n(t,x)-v^n(s,x)|^2} \leq C_n
|t-s|^{1-\frac q2}.
\label{eq:966}
\eeq
For the space increments, we have, by the computations that let to \eqref{eq:11} (and setting $h:=z-x$),
\begin{align*}
\E{|v^n(t,x)-v^n(t,z)|^2} & \leq 	
2\int_{\R^d}\big(1-\cos(<\xi, h>)\big)\frac{1-e^{-T|\xi|^{2}}}{|\xi|^{2}}\mu_{n}(d\xi)\\	
& \leq C \big( |h|^2 \mu_n(\{|\xi|\leq 1\}) + 
|h|^{2-q} \int_{\R^d} \frac{\mu_n(d\xi)}{1+|\xi|^q}	\big)\\
& \leq C_n |h|^{2-q}.
\end{align*}
For the time increments, we assume that $t>s$ and we consider decomposition \eqref{eq:888}. In order to deal with the term $A_1^n$, we apply that $1-e^{-y}\leq y$, $y\in \R_+$, and the fact that $1-\frac q2\in (0,1)$. Thus,
\begin{align*}
	A_1^n & = \int_{\R^d} \frac{1-e^{(t-s)|\xi|^2}}{|\xi|^2}\mu_n(d\xi) \\
	& \leq (t-s)\mu_n(\{|\xi|\leq 1\}) + \int_{\{|\xi|>1\}} 
	\frac{\big(1-e^{(t-s)|\xi|^2}\big)^{1-\frac q2}}{|\xi|^2}\mu_n(d\xi)\\
	& \leq 	C_n (t-s)^{1-\frac q2}.
\end{align*}
The term $A_2^n$ can be treated in the same way, yielding
$A_n^2 \leq C_n (t-s)^{1-\frac q2}$. Hence, estimates \eqref{eq:955} and \eqref{eq:966} hold true. 
By Kolmogorov continuity criterion, we can conclude that $v^n$ admits a version with jointly (Hölder-)continuous paths. 

The remaining of the proof follows as in Steps 2 and 3 of the proof of Theorem \ref{thm: cont}. More precisely, one considers \eqref{eq:69} with $G$ being the fundamental solution of the heat equation and assuming that $b$ is a bounded function. Then, using Lemma \ref{lem: gron-h}, one proves that equation \eqref{eq:69} admits a unique solution in the space $\C([0,T]\times \R^d)$ and, moreover, the operator $F$ defined in \eqref{eq:190} is continuous. Finally, one concludes the proof by taking, in equation \eqref{eq:69}, $\eta(t,x)=I^d_0(t,x)+ v^n(t,x)$, for $(t,x)\in [0,T]\times \R^d$.     
\qed

\subsection{Heat equation with general drift}
\label{sec: heat-general}

In this section, we will deal with the stochastic heat equation with a general globally Lipschitz drift $b$. Our aim is to prove the following result. Since we aim to apply Kolmogorov continuity criterion directly to the solution $u^n$ of \eqref{eq: heatn}, we are forced to assume more regularity on the initial condition. 

\begin{proposition}\label{prop: heat}
 Let $n\geq 1$  and consider $u^n$ the solution to \eqref{eq: heatn}, which satisfies the mild form \eqref{eq: mild}, where the fundamental solution $G$ is given by \eqref{eq: heat kernel}. We assume that $b:\R\to\R$ is globally Lipschitz and $u_0$ satisfies (i) in Hypothesis \ref{hyp: ic}. Moreover, suppose that $u_0\in \C^\alpha(\R^d)$, for some $\alpha\in (0,1)$. Assume that, for some $q\in (0,2)$ the spectral measure $\mu_n$ satisfies \eqref{eq:45}. Then, for any $p\geq 1$, 	
 there exists $C_n>0$ such that, for all $x,z\in \R^d$, we have
\beq
\sup_{t\in[0,T]} \E{|u^n(t,x)-u^n(t,z)|^p} \leq C_n
|x-z|^{p\beta},
\label{eq:975}
\eeq
where $\beta=\min(\alpha,1-\frac q2)$. 
Moreover, for any $s,t\in [0,T]$, we have
\beq
\sup_{x\in \R^d} \E{|u^n(t,x)-u^n(s,x)|^p} \leq C_n
|t-s|^{p\frac \beta 2}.
\label{eq:976}
\eeq	
As a consequence, $u^n$ admits a version with jointly Hölder-continuous paths. 
\end{proposition}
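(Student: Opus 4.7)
The plan is to decompose the mild solution as $u^n(t,x) = I_0^d(t,x) + v^n(t,x) + N^n(t,x)$, where $N^n(t,x) := \int_0^t (b(u^n(s)) \ast G_{t-s})(x)\, ds$, estimate the $L^p(\Omega)$-norm of each summand's space and time increments separately, and then invoke Kolmogorov's continuity criterion on $[0,T]\times\R^d$ with $p$ chosen large enough that $p\beta/2 > d+1$.

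The bounds on $I_0^d$ and $v^n$ are easy. For $I_0^d = u_0 \ast G_t$, translation invariance of $G_t$ combined with $u_0\in\C^\alpha$ immediately gives $|I_0^d(t,x)-I_0^d(t,z)|\leq C|x-z|^\alpha$; for the time direction, the scaling $y=\sqrt{t}\,z$ in the convolution integral, combined with $|\sqrt{t}-\sqrt{s}|\leq \sqrt{|t-s|}$ and $\int_{\R^d}|z|^\alpha e^{-|z|^2/2}\,dz<\infty$, produces $|I_0^d(t,x)-I_0^d(s,x)|\leq C|t-s|^{\alpha/2}$. For $v^n$, which is a centered Gaussian random field, Gaussian moment equivalence upgrades the $L^2$-bounds \eqref{eq:955}--\eqref{eq:966} (established in the proof of Theorem \ref{thm: cont-heat-bdd} under the sole assumption \eqref{eq:45}) to $\|v^n(t,x)-v^n(t,z)\|_p \leq C_n|x-z|^{1-q/2}$ and $\|v^n(t,x)-v^n(s,x)\|_p \leq C_n|t-s|^{(1-q/2)/2}$.

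For the space increments of the drift term, I will start from
\[
N^n(t,x)-N^n(t,z) = \int_0^t \int_{\R^d} \bigl[b(u^n(s,x-w))-b(u^n(s,z-w))\bigr]\, G_{t-s}(w)\, dw\, ds,
\]
apply the Lipschitz property of $b$, Minkowski's integral inequality, and $\int_{\R^d} G_{t-s}(w)\,dw=1$. Setting $\phi(t):=\sup_{|x-z|=h}\|u^n(t,x)-u^n(t,z)\|_p$ with $h:=|x-z|$, and collecting the contributions of all three summands, one obtains $\phi(t)\leq C_n h^\beta + C_b\int_0^t \phi(s)\, ds$. Since Theorem \ref{thm: existence} guarantees $\phi(s)\leq 2\sup_{(r,y)}\|u^n(r,y)\|_p<\infty$, Grönwall's lemma produces \eqref{eq:975}.

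For the time increments of the drift term I split
\[
N^n(t,x)-N^n(s,x) = \int_s^t (b(u^n(r))\ast G_{t-r})(x)\,dr + \int_0^s \bigl(b(u^n(r))\ast(G_{t-r}-G_{s-r})\bigr)(x)\,dr.
\]
The first summand is bounded by $C(t-s)$ using Minkowski and the uniform $L^p$-bound on $b(u^n)$. For the second, translation invariance reduces the problem to estimating $\int_0^s\int_{\R^d} |G_{\sigma+h}(y)-G_\sigma(y)|\,dy\,d\sigma$ with $h=t-s$, which I will bound via $\int_{\R^d}|G_{\sigma+h}-G_\sigma|\,dy\leq \min(2,\,Ch/\sigma)$ (the latter following from $\|\Delta G_\tau\|_{L^1(\R^d)}\leq C/\tau$) and then splitting the $\sigma$-integral at $\sigma=h$ to produce the bound $Ch|\log h|$. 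The main obstacle is absorbing this logarithmic factor into a clean $|t-s|^{\beta/2}$ bound, which works precisely because $\beta/2<1$ forces $h|\log h|\leq C_T\,h^{\beta/2}$ on $[0,T]$. Combining with the corresponding bounds on $I_0^d$ and $v^n$ yields \eqref{eq:976}, and Kolmogorov's criterion then delivers the jointly Hölder-continuous modification of $u^n$.
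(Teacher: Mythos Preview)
Your proof is correct and agrees with the paper's for the space increments (Lipschitz bound on $b$, Minkowski, then Gronwall on a supremum over translates). For the time increments, however, the paper proceeds differently: it changes variable in the convolution so that the time shift lands on $u^n$ rather than on the kernel. Writing $N^n(s+h,x)=\int_0^{s+h}\int_{\R^d} b(u^n(s+h-\sigma,y))G_\sigma(x-y)\,dy\,d\sigma$ and similarly for $N^n(s,x)$, the difference splits into a tail piece $\int_s^{s+h}(\cdots)\,d\sigma$, bounded by $Ch$, and a main piece $\int_0^s\int_{\R^d}\bigl[b(u^n(s+h-\sigma,y))-b(u^n(s-\sigma,y))\bigr]G_\sigma(x-y)\,dy\,d\sigma$, which after Lipschitz and Minkowski is dominated by $C_b\int_0^s\sup_y\|u^n(r+h,y)-u^n(r,y)\|_p\,dr$. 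A second Gronwall argument then yields \eqref{eq:976} directly, with no logarithmic loss.

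Your route instead freezes $u^n(r,\cdot)$ and estimates the kernel difference $\int_{\R^d}|G_{\sigma+h}-G_\sigma|\,dy$ via the heat semigroup bound $\|\Delta G_\tau\|_{L^1}\leq C/\tau$; this avoids the second Gronwall but costs a factor $h|\log h|$ that you then absorb into $h^{\beta/2}$ using $\beta/2<1$. Both arguments are valid. The paper's change-of-variable trick is a bit cleaner (no logarithm to swallow, and a uniform Gronwall structure for both space and time), while yours is more direct and makes explicit that the drift term alone enjoys almost Lipschitz time regularity, independent of the Gronwall loop.
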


\begin{proof}
First, in the proof of \cite[Thm. 4.3]{Sanz-Sarra} it has been showed that, for all $x,z\in \R^d$, 
\beq
\sup_{t\in [0,T]} |I^d_0(t,x) - I^d_0(t,z)|\leq C |x-z|^\alpha, 
\label{eq:300}
\eeq
and for all $s,t\in [0,T]$,
\beq
\sup_{x\in \R^d} |I^d_0(t,x) - I^d_0(s,x)|\leq C |t-s|^{\frac \alpha 2}. 
\label{eq:301}
\eeq
Next we define, as in \eqref{eq:120}, 	
\[
v^n(t,x):=\int_0^t\int_{\R^d} G_{t-s}(x-y)W^n(ds,dy), \quad (t,x)\in [0,T]\times \R^d.
\]
The second-order moments of the space and time increments of $v^n$ have been studied in the proof of Theorem \ref{thm: cont-heat-bdd}; see estimates \eqref{eq:955} and \eqref{eq:966} therein. Then, since $v^n$ is a Gaussian random field, it holds, for all $p\geq 1$ and $x,z\in \R^d$,
\beq
\sup_{t\in[0,T]} \E{|v^n(t,x)-v^n(t,z)|^p} \leq C_n
|x-z|^{(1-\frac q2)p},
\label{eq:9955}
\eeq
and for any $s,t\in [0,T]$, 
\beq
\sup_{x\in \R^d} \E{|v^n(t,x)-v^n(s,x)|^p} \leq C_n
|t-s|^{(\frac 12-\frac q4)p}.
\label{eq:9966}
\eeq

From now on, we follow similar arguments as those used in the proof of \cite[Thm. 2.1]{Sanz-Sarra}, so we will only sketch the main computations. 
Let $x,z\in \R^d$ and $t\in [0,T]$, and denote $h:=z-x$. Taking into account \eqref{eq:300}, \eqref{eq:9955} and the Lipschitz assumption on $b$, and applying Hölder inequality with respect to the finite measure $G_{t-s}(y)dyds$ on $[0,t]\times\R^d$, one can readily check that, for all $p\geq 1$,	
\begin{align*}
&\E{|u^n(t,x+h) - u^n(t,x)|^p}\\ 
& \qquad  \leq C_n |h|^{\beta p}  	+
C \int_0^t \int_{\R^d} \E{|u^n(s,x + h-y) - u^n(s,x-y)|^p} G_{t-s}(y)dyds\\
&  \qquad = C_n |h|^{\beta p} 
+ C \int_0^t \int_{\R^d} \E{|u^n(s,y+h) - u^n(s,y)|^p}  G_{t-s}(x-y)dyds\\
& \qquad \leq C_n |h|^{\beta p} 
+ C \int_0^t  \sup_{y\in \R^d}\E{|u^n(s,y+h) - u^n(s,y)|^p} ds.
\end{align*}	
Note that we have used that $\int_{\R^d} G_{t-s}(x-y)dy=1$, for all $(t,x)\in [0,T]\times \R^d$.
Hence, Gronwall lemma clearly implies \eqref{eq:975}. 

Regarding the time increments, let $s,t\in [0,T]$ with $s<t$ and $x\in \R^d$, and set $h:=t-s$. Then, using similar arguments and taking into account estimates \eqref{eq:301} and \eqref{eq:9966}, we have
\begin{align*}
\E{|u^n(s+h,x) - u^n(s,x)|^p} & \leq C_n h^{p\frac \beta 2} + C
\left(\int_s^{s+h}\int_{\R^d} G_{s+h-r}(x-y)dydr\right)^p \\
& \qquad \quad + C \int_0^s \sup_{y\in \R^d} \E{|u^n(r+h,y) - u^n(r,y)|^p}dr\\
& \leq  C_n h^{p\frac \beta 2} + h^p +  C \int_0^s \sup_{y\in \R^d} \E{|u^n(r+h,y) - u^n(r,y)|^p}dr\\
& \leq  C_n h^{p\frac \beta 2} + C \int_0^s \sup_{y\in \R^d} \E{|u^n(r+h,y) - u^n(r,y)|^p}dr.
\end{align*}	
Applying again Gronwall lemma, we get \eqref{eq:976} and therefore we conclude the proof. 	
\end{proof}


\section{Quasi-linear case: weak convergence}
\label{sec: proof-main}

This section is devoted to prove the main result of the paper, namely Theorem \ref{thm: main}.
Recall that $u^n=\{u^n(t,x),\, (t,x)\in [0,T]\times \R\}$ denotes the mild solution to \eqref{eq:
	waven} (resp. \eqref{eq: heatn}), which satisfies, for all $(t,x)\in[0,T]\times \R^d$,
\begin{equation}
	u^n(t,x)  = I^d_0(t,x)+ \int_{0}^{t}\int_{\R^d} G_{t-s}(x-y) W^n(ds,dy)  
	+ \int_{0}^{t} \big(b(u^n)*G_{t-s}\big)(x) ds, 
	\label{eq:500}
\end{equation}
where $G$ is the corresponding fundamental solution, $I_0^d$ is given by \eqref{eq:15} and $b$ is globally Lipschitz.  

Before getting involved in the proof, we have to make sure that the limit candidate $u$, defined as the solution to \eqref{eq: mild_u}, takes its values in the space $\C([0,T]\times \R^d)$. The following result addresses this issue.

\begin{proposition}\label{prop: uhol}
	Let $u$ be the solution of equation \eqref{eq: mild_u}, where $G$ is the fundamental solution of the wave equation (resp. heat equation) and $b$ is a Lipschitz function. Assume that the spectral measure $\mu$ satisfies, for some $q\in (0,2)$,
	\[
	\int_{\R^d}\frac{\mu(d\xi)}{1+|\xi|^q}<\infty.
	\]
	Consider the following assumptions on the initial data:
	\begin{enumerate}
		\item[(a)] Wave equation: (ii) in Hypothesis \ref{hyp: ic}.
		\item[(b)] Heat equation with bounded drift: (i) in Hypothesis \ref{hyp: ic}.
		\item[(c)] Heat equation with general drift: (i) in Hypothesis \ref{hyp: ic} and $u_0\in \C^\alpha(\R^d)$, for some $\alpha\in (0,1)$.
	\end{enumerate}
Then, the random field $u$ admits a version with (Hölder-)continuous paths.  
\end{proposition}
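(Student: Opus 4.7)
The plan is to mirror, for the limit equation \eqref{eq: mild_u}, the three arguments developed in Sections \ref{sec: wave}, \ref{sec: heat-bounded} and \ref{sec: heat-general} for the pre-limit equation \eqref{eq: mild}. The key observation that makes this transfer possible is that, by Lemma \ref{lem: mu}, the spectral measure $\mu$ satisfies the integrability condition $\int_{\R^d} \mu(d\xi)/(1+|\xi|^q) < \infty$ for the same $q \in (0,2)$ used for $\mu_n$; this is precisely the hypothesis \eqref{eq:45} required in Theorems \ref{thm: cont} and \ref{thm: cont-heat-bdd} and Proposition \ref{prop: heat}. Moreover, by Remark \ref{rmk:10}, the limit equation \eqref{eq: mild_u} admits a unique solution $u$ with $L^p(\Omega)$-uniformly bounded moments for any $p \geq 1$.

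First I would handle the stochastic convolution $v(t,x) = \int_0^t \int_{\R^d} G_{t-s}(x-y) W(ds,dy)$ associated with the limit noise $W$. By Proposition \ref{prop: vhol}, $v$ already admits a version with (Hölder-)continuous paths in both the wave and heat cases, under the integrability assumption on $\mu$. Next, for the wave equation (case (a)) and for the heat equation with bounded drift (case (b)), I would reuse the path-by-path strategy of Steps 2 and 3 in the proof of Theorem \ref{thm: cont}. Namely, the deterministic integral equation \eqref{eq:69} is well-posed in $\C([0,T] \times \R^d)$, and the solution operator $F$ defined in \eqref{eq:190} maps continuous inputs to continuous outputs; this was established via Lemma \ref{lem: gron-w} (wave) and Lemma \ref{lem: gron-h} (heat), neither of which involves the noise at all. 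Applying $F$ to the (almost surely continuous) sample path $\eta(t,x) = I_0^d(t,x) + v(t,x)$ yields a continuous version of $u$, thanks to the continuity of $I_0^d$ from Lemma \ref{lem: dqs} and the fact that the resulting random field agrees almost surely with the unique solution to \eqref{eq: mild_u} (by uniqueness of $F(\eta)$).

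For the heat equation with general Lipschitz drift (case (c)), the deterministic equation \eqref{eq:69} is not well-posed for arbitrary continuous $\eta$, so instead I would apply Kolmogorov's continuity criterion directly to $u$, as done in the proof of Proposition \ref{prop: heat}. The starting estimates \eqref{eq:300}--\eqref{eq:301} on $I^d_0$ are deterministic and depend only on the Hölder assumption $u_0 \in \C^\alpha(\R^d)$. The Gaussian moment bounds \eqref{eq:9955}--\eqref{eq:9966} on the stochastic convolution $v$ go through verbatim with $\mu_n$ replaced by $\mu$ (since the arguments in Section \ref{sec: tightness-heat} only use the integrability of the spectral measure, which holds for $\mu$ by Lemma \ref{lem: mu}). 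Combining these with the Lipschitz property of $b$, a Hölder-inequality convolution argument, and a standard Gronwall step yields the moment estimates
\[
\sup_{t\in[0,T]} \E{|u(t,x)-u(t,z)|^p} \leq C |x-z|^{p\beta}, \qquad
\sup_{x\in\R^d} \E{|u(t,x)-u(s,x)|^p} \leq C |t-s|^{p\beta/2},
\]
with $\beta = \min(\alpha, 1 - q/2)$, for any $p \geq 1$. A standard application of Kolmogorov's continuity criterion then provides the desired (Hölder-)continuous modification.

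The main obstacle is essentially book-keeping rather than substantive: one must verify that every estimate in Sections \ref{sec: wave}--\ref{sec: heat-general} used only the integrability condition \eqref{eq:45} and not any finer property of the pre-limit measures $\mu_n$ (in particular, not the uniformity in $n$ supplied by Hypothesis \textbf{(H1)}). Once this is checked, the three cases follow immediately by literal substitution of $\mu_n \leadsto \mu$ and $W^n \leadsto W$ in the corresponding earlier proofs.
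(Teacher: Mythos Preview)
Your proposal is correct and follows exactly the approach the paper takes: the paper's proof simply states that cases (a) and (b) are built in the same way as Theorems \ref{thm: cont} and \ref{thm: cont-heat-bdd}, and case (c) follows the same steps as Proposition \ref{prop: heat}. Your write-up is in fact more detailed than the paper's, correctly isolating that the only ingredient needed is the integrability condition \eqref{eq:45} on $\mu$ (ensured by Lemma \ref{lem: mu}) rather than the uniform-in-$n$ hypothesis \textbf{(H1)}.
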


\begin{proof}
	In the cases (a) and (b), the proof can be built exactly in the same way as it has been done for Theorems \ref{thm: cont} and \ref{thm: cont-heat-bdd}, respectively. In the case (c), it is readily checked that we just need to follow the same steps as those in the proof of Proposition \ref{prop: heat}. 	
\end{proof}

The validity of Theorem \ref{thm: main} for the wave equation and for the heat equation with bounded drift is an immediate consequence of the results in sections \ref{sec: conv-linear}, \ref{sec: wave} and \ref{sec: heat-bounded}. More precisely, owing to steps 2 and 3 in the proof of Theorem \ref{thm: cont} (see, respectively, the final part of the proof of Theorem \ref{thm: cont-heat-bdd} for the case of the heat equation with bounded drift), we can infer that 
\beq
u^n=\big(F\circ T_{I_0^d}\big)(v^n),
\label{eq:400}
\eeq   
where $F$ is the operator defined in \eqref{eq:190}, which we proved to be a continuous functional, and $T_{I_0^d}:\C([0,T]\times \R^d)\to \C([0,T]\times \R^d)$ is the following translation operator:
\[
T_{I_0^d}(\eta)(t,x):=\eta(t,x)+I_0^d(t,x), \quad \eta\in \C([0,T]\times \R^d).
\]
Since $I_0^d$ is a continuous function (by \cite[Lem. 4.2]{Dalang-Quer}), $T_{I_0^d}$ is a well-defined continuous functional. In \eqref{eq:400}, we recall that $v^n$ denotes the stochastic convolution (see \eqref{eq:120}). In Section \ref{sec: conv-linear}, we showed that $v^n$ converges in law  to $v$, in the space $\C([0,T]\times \R^d)$, where $v$ is given by 
\[
v(t,x)=\int_0^t\int_{\R^d} G_{t-s}(x-y)W(ds,dy), 
\]
and $W$ is a Gaussian spatially homogeneous noise with spectral measure $\mu$ (see Hypothesis {\bf (H2)}). Hence, since $F\circ T_{I_0^d}$ defines a continuous operator on $\C([0,T]\times \R^d)$, the so-called Mapping theorem (see, e.g., \cite[Thm. 2.7]{Bill}) implies that $u^n$ converges in law to $u$, the solution of \eqref{eq: mild_u}. This concludes the proof of Theorem \ref{thm: main}  in the case of the wave equation and the case of the heat equation with bounded drift.   

From now on, we focus on the heat equation \eqref{eq: heatn} with a general globally Lipschitz drift $b$. In this case, in order to prove Theorem \ref{thm: main} we will follow a different strategy. Namely, first we check that the family of laws of $\{u^n,\, n\geq 1\}$ is tight in the space $\C([0,T]\times \R^d)$. Next, we will use Prohorov's theorem (see, e.g., \cite[Thm. 5.1]{Bill} and the Corollary that follows) in order to identify the limit law. 

\begin{proposition}\label{prop: tightness-heat}
Let $u^n$ be the solution of \eqref{eq: heatn}, which satisfies equation \eqref{eq:500} where $G$ is the heat kernel given by \eqref{eq: heat kernel}. We assume that $b$ is globally Lipschitz and  $u_0$ is measurable, bounded and $\alpha$-Hölder continuous for some $\alpha\in (0,1)$. Suppose that Hypothesis {\bf (H1)} holds. Then, the laws of $\{u^n,\, n\geq 1\}$ form a tight family in   	
$\C([0,T]\times \R^d)$.	
\end{proposition}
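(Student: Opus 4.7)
The plan is to invoke the multidimensional tightness criterion Theorem \ref{thm: criterion}. This criterion reduces the task to two items: a control of $u^n$ at a base point---here trivial since $u^n(0,0)=u_0(0)$ is a deterministic constant---and a uniform-in-$n$ Kolmogorov-type moment estimate of the form
\[
\sup_{n\geq 1}\E{|u^n(t',x')-u^n(t,x)|^p}\leq C\bigl(|t'-t|+|x'-x|\bigr)^{\delta},
\]
valid for $(t,x),(t',x')$ in an arbitrary compact subset of $[0,T]\times\R^d$, with $\delta$ eventually exceeding $d+1$. The whole work therefore consists in producing such a uniform Hölder moment bound.

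First, I would write $u^n(t,x)=I_0^d(t,x)+v^n(t,x)+J^n(t,x)$, where $v^n$ is the stochastic convolution defined in \eqref{eq:120} and $J^n(t,x):=\int_0^t(b(u^n(s))\ast G_{t-s})(x)\,ds$. The deterministic piece $I_0^d$ is $\alpha$-Hölder in space and $\alpha/2$-Hölder in time by \eqref{eq:300}--\eqref{eq:301}, with constants independent of $n$. For $v^n$, I would invoke Proposition \ref{prop:tightness}, which under Hypothesis \textbf{(H1)} provides space increments of order $|x'-x|^{2-q}$ and time increments of order $|t'-t|^{1-q/2}$ in $L^2(\Omega)$, \emph{uniformly} in $n$; the Gaussianity of $v^n$ then upgrades these bounds to arbitrary $L^p(\Omega)$.

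Second, I would establish the uniform estimate $\sup_{n\geq 1}\sup_{(t,x)\in[0,T]\times\R^d}\E{|u^n(t,x)|^p}<\infty$ by rerunning the Picard iteration of Theorem \ref{thm: existence}; all the relevant constants depend only on $\sup_{n\geq 1}\int \mu_n(d\xi)/(1+|\xi|^2)$, which is finite by Remark \ref{rmk: Dalang} and Hypothesis \textbf{(H1)}. With this bound in hand, I would treat $J^n$ exactly as in Proposition \ref{prop: heat}: the Lipschitz property of $b$, Hölder's inequality against the probability measure $G_{t-s}(y)\,dy$ on $\R^d$, and a change of variables lead, for every $p\geq 1$ and $h\in\R^d$, to
\[
\sup_{y\in\R^d}\E{|u^n(t,y+h)-u^n(t,y)|^p}\leq C|h|^{p\beta}+C\int_0^t\sup_{y\in\R^d}\E{|u^n(s,y+h)-u^n(s,y)|^p}\,ds,
\]
with $\beta=\min(\alpha,1-q/2)$ and $C$ independent of $n$; an analogous inequality, with increment $h^{p\beta/2}$, holds for time increments. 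Gronwall's lemma then produces the required uniform Hölder bounds, and choosing $p$ large enough that $p\beta/2>d+1$ puts us in position to apply Theorem \ref{thm: criterion} and conclude.

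The main obstacle is making sure that every constant appearing in the Gronwall recursion is \emph{uniform} in $n$. This is precisely the reason the pathwise method of Section \ref{sec: wave} cannot be recycled here---equation \eqref{eq:69} is ill-posed for a general Lipschitz drift---and it is also the reason why Hypothesis \textbf{(H1)}, and not merely Dalang's condition for each individual $\mu_n$, is needed: it is exactly what simultaneously guarantees uniform-in-$n$ control on the $L^p$-moments of $u^n$ itself and on the $L^2$-increments of the stochastic convolution $v^n$.
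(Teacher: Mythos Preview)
Your proposal is correct and follows essentially the same approach as the paper's proof: both invoke Proposition~\ref{prop:tightness} for the uniform-in-$n$ increments of $v^n$, then rerun the Gronwall argument of Proposition~\ref{prop: heat} with all constants independent of $n$, and finally apply Theorem~\ref{thm: criterion}. If anything, you are slightly more explicit than the paper in isolating the intermediate step $\sup_{n\geq 1}\sup_{(t,x)}\E{|u^n(t,x)|^p}<\infty$, which is needed to control the $\int_s^{s+h}$ portion of $J^n$ in the time increments but is only implicit in the paper's sketch.
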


\begin{proof}
	The following statement is an immediate consequence of Proposition \ref{prop:tightness}: for all $p\geq 1$ and  $K\subset\R^d$ compact, there exists a constant $C>0$ such that, for all $x,z\in K$, 
	\beq
	\sup_{n\geq 1}\sup_{t\in [0,T]}
	\E{|v^n(t,x)-v^n(t,z)|^p} \leq C |x-z|^{p(1-\frac q 2)}, 
	\label{eq:600}
	\eeq
	and for all $s,t\in [0,T]$, 
	\beq
	\sup_{n\geq 1}\sup_{x\in \R^d}
	\E{|v^n(t,x)-v^n(s,x)|^p} \leq C |x-z|^{p(\frac12-\frac q 4)}. 
	\label{eq:601}
	\eeq
	Here, the parameter $q\in (0,2)$ is the one given in Hypothesis {\bf (H1)}. 
	
	Next, we repeat the proof of Proposition \ref{prop: heat} but using estimates \eqref{eq:600} and \eqref{eq:601} instead of 
	\eqref{eq:9955} and \eqref{eq:9966}, respectively. Thus, setting $h:=z-x$, we obtain that 
\[
\E{|u^n(t,x+h) - u^n(t,x)|^p} \leq
C |h|^{\beta p} 
+ C \int_0^t  \sup_{y\in \R^d}\E{|u^n(s,y+h) - u^n(s,y)|^p} ds,
\]
where $\beta=\min(\alpha,1-\frac q2)$. Note that now the constant appearing on the right-hand side above does not depend on $n$. Gronwall lemma let us conclude that 
\[
\sup_{n\geq 1}\sup_{t\in [0,T]} \E{|u^n(t,z) - u^n(t,x)|^p} \leq C |x-z|^{\beta p}
\] 
Regarding the time increments of $u^n$, we will end up with the estimate
\[
\sup_{n\geq 1}\sup_{x\in \R^d} \E{|u^n(t,x) - u^n(s,x)|^p} \leq C |t-s|^{\frac \beta 2 p}.
\] 
Therefore, it holds that
\[
\sup_{n\geq 1} \E{|u^n(t,x) - u^n(s,z)|^p}\leq C
\big(|t-s|+|x-z|\big)^{\frac \beta 2 p},
\]
for all $s,t\in [0,T]$ and $x,z\in K$. Taking $p$ sufficiently large, we can apply Theorem \ref{thm: criterion} and so conclude the proof. 
\end{proof}

The validity of Theorem \ref{thm: main} for the case of the heat equation with arbitrary Lipschitz drift is a consequence of Proposition \ref{prop: tightness-heat} and the next result.

\begin{proposition}\label{prop: iden}
	Let $u^n$ be the solution of \eqref{eq: heatn}, which satisfies equation \eqref{eq:500} where $G$ is the heat kernel given by \eqref{eq: heat kernel}. We assume that $b$ is globally Lipschitz and  $u_0$ is measurable, bounded and $\alpha$-Hölder continuous for some $\alpha\in (0,1)$. Suppose that Hypothesis {\bf (H2)} holds. Then, the finite-dimensional distributions of $u^n$ converge to those of $u$, as $n\to\infty$, where $u$ is the solution to \eqref{eq: mild_u}.
\end{proposition}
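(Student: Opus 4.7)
The strategy is the truncation–approximation argument outlined in the introduction. For $N\geq 1$, introduce a truncation $b_N:\R\to\R$ that coincides with $b$ on $[-N,N]$, equals $b(\pm N)$ outside $[-N,N]$, and hence is bounded and globally Lipschitz with the same Lipschitz constant $L_b$ as $b$. Let $u^{n,N}$ and $u^{N}$ denote the mild solutions of the heat equation with drift $b_N$ driven by the noises with spectral measures $\mu_n$ and $\mu$, respectively. Since $b_N$ is bounded, the portion of Theorem \ref{thm: main} already established in Section \ref{sec: heat-bounded} yields $u^{n,N}\to u^{N}$ in law in $\C([0,T]\times\R^d)$ as $n\to\infty$; in particular the finite-dimensional distributions of $u^{n,N}$ converge to those of $u^{N}$ for every fixed $N$.

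The core technical ingredient is the following comparison estimate: for every $p\geq 1$ there exists $C_p$, independent of $n$ and $N$, such that
$$\sup_{n\geq 1}\sup_{(t,x)\in[0,T]\times\R^d}\E{|u^n(t,x)-u^{n,N}(t,x)|^p}\leq \frac{C_p}{N^p},$$
together with the analogous bound for $|u(t,x)-u^{N}(t,x)|$. Its proof starts from a uniform moment bound $\sup_{n}\sup_{(t,x)}\E{|u^n(t,x)|^p}<\infty$, derived by combining the linear growth of $b$, the boundedness of $I^d_0$ from Lemma \ref{lem: dqs}, the uniform bound for the stochastic convolution from Lemma \ref{lem: unif-moment} (which uses Hypothesis \textbf{(H1)}), and a Grönwall argument applied after Jensen's inequality with respect to the probability measure $G_{t-s}(x-y)\,dy\,ds/t$. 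Subtracting the two mild equations and splitting
$$b(u^n)-b_N(u^{n,N})=[b-b_N](u^n)+[b_N(u^n)-b_N(u^{n,N})],$$
the first summand is supported on $\{|u^n|>N\}$ and bounded by $C(1+|u^n|)\mathbf{1}_{\{|u^n|>N\}}$; its contribution is $O(N^{-p})$ uniformly in $n$ by Cauchy--Schwarz together with Chebyshev and the moment bound of order $2p$. The second summand is controlled by $L_b|u^n-u^{n,N}|$, and another Grönwall argument closes the estimate. The bound for $|u-u^{N}|$ is obtained analogously using Remark \ref{rmk:10}.

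To identify the limit, fix points $(t_1,x_1),\ldots,(t_m,x_m)$ in $[0,T]\times\R^d$ and a bounded Lipschitz function $f:\R^m\to\R$, and decompose
$$\E{f\bigl(u^n(t_i,x_i)\bigr)}-\E{f\bigl(u(t_i,x_i)\bigr)} = A_1^{n,N}+A_2^{n,N}+A_3^{N},$$
where the first and third terms compare $u^n$ with $u^{n,N}$ and $u$ with $u^{N}$, while the middle one compares $u^{n,N}$ with $u^{N}$. The Lipschitz property of $f$ combined with the comparison estimate yields $|A_1^{n,N}|+|A_3^{N}|\leq C\|f\|_{\mathrm{Lip}}\,m/N$ uniformly in $n$, whereas $A_2^{n,N}\to 0$ as $n\to\infty$ for every fixed $N$ by the bounded-drift result. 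Choosing $N$ large first and then $n$ large concludes the convergence for bounded Lipschitz test functions; a standard density argument extends it to all bounded continuous $f$, which suffices to conclude convergence of finite-dimensional distributions.

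The main obstacle is the comparison estimate, and in particular the uniform moment bound on $u^n$, which crucially relies on the fact that Hypothesis \textbf{(H1)} makes Dalang's condition hold uniformly in $n$ via Lemma \ref{lem: unif-moment}. Once the moment bound is in place, the truncation–approximation scheme is fairly mechanical and requires no tightness or path-continuity information---appropriate since only finite-dimensional convergence is being established here, with tightness already dealt with separately in Proposition \ref{prop: tightness-heat}.
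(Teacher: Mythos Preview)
Your proposal is correct and follows essentially the same approach as the paper: truncate the drift to reduce to the bounded-drift case already handled, establish a comparison estimate uniform in $n$ (relying on the uniform moment bound from Lemma~\ref{lem: unif-moment}, hence on \textbf{(H1)}), and conclude via the three-term decomposition with Lipschitz test functions. The only cosmetic differences are that the paper truncates the \emph{range} of $b$ (setting $b_m=(b\wedge m)\vee(-m)$) rather than freezing $b$ outside $[-N,N]$, and that it states the comparison as a qualitative $o(1)$ as $m\to\infty$ rather than your quantitative $O(N^{-p})$; both variants work equally well.
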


\begin{proof}
	First, we truncate the drift $b$ as follows. Let $m\geq 1$ and define
	$$b_m(x):=\begin{cases}  b(x)\wedge m, & \text{if }b(x)\geq0, \\
		b(x )\vee -m, & \text{if }b(x)<0.
	\end{cases}$$
	Then, the function $b_m$ is bounded and Lipschitz continuous, and converges
	pointwise to $b$, as $m\rightarrow \infty$. Moreover, a unique
	Lipschitz constant can be fixed for all functions $b_m$, $m\geq 1$,
	and $b$. Let $u_m^n$ be the solution of \eqref{eq: mild} with $b$ replaced by $b_m$.
	An immediate consequence of (b) in Theorem \ref{thm: main} is that, for any fixed $m\geq 1$,
	\begin{equation}
		u_m^n\xrightarrow[n\to\infty]{\mathcal{L}} u_m
		\label{eq:245}
	\end{equation}
	in the space $\C([0,T]\times \R^d)$, where  $u_m$ denotes the solution of \eqref{eq: mild_u} with $b$ replaced by $b_m$. Next, we claim that the following convergence is fulfilled:
	 \beq
	 \sup_{n\geq 1}\sup_{(t,x)\in [0,T]\times\R^d} \E{|u_m^n(t,x)-u^n(t,x)|^2}\xrightarrow[m\to \infty]{}0.
	 \label{eq:412}
	 \eeq
	 The proof of the above convergence follows exactly in the same way as in Step 2 of \cite[Sec. 4.3]{GJQ-Bernoulli}. The only needed auxiliary result is that, for all $p\geq 2$, it holds: 
	 \[
	 \sup_{n\geq 1}\sup_{(t,x)\in [0,T]\times\R^d} \E{|u^n(t,x)|^p}<\infty. 
	 \]
	 This estimate has been proved in Lemma \ref{lem: unif-moment}. Using the same arguments, one also shows that
	 \beq
	 \sup_{(t,x)\in [0,T]\times\R^d} \E{|u_m(t,x)-u(t,x)|^2}\xrightarrow[m\to \infty]{}0.
	 \label{eq:413}
	 \eeq
	 	 
	 At this point, we have all the ingredients to show that the finite-dimensional distributions of $u^n$ converge to those of $u$. The proof is similar to that of Step 3 in \cite[Sec. 4.3]{GJQ-Bernoulli}. We will give it for the sake of completeness. Let   $(t_1,x_1),\dots,(t_k,x_k)\in [0,T]\times \R^d$ and $f:\R^k\to\R$ be continuous and bounded. Then, we write
	 \begin{equation*}
	 	\begin{split}
	 		& \Big|\E{f\big(u^n(t_1,x_1), \dots,u^n(t_k,x_k)\big)-f\big(u(t_1,x_1),\dots,u(t_k,x_k)\big)}\Big| \\
	 		& \qquad \leq  \Big| \E{f\big(u^n(t_1,x_1),\dots,u^n(t_k,x_k)\big)-
	 			f\big(u_m^n(t_1,x_1),\dots,u_m^n(t_k,x_k)\big)}  \Big| \\
	 		& \qquad \quad +\Big| \E{f\big(u_m^n(t_1,x_1),\dots,u_m^n(t_k,x_k)\big)-
	 			f\big(u_m(t_1,x_1),\dots,u_m(t_k,x_k)\big)}  \Big| \\
	 		&  \qquad \quad  +\Big| \E{f\big(u_m(t_1,x_1),\dots,u_m(t_k,x_k)\big)-
	 			f\big(u(t_1,x_1),\dots,u(t_k,x_k)\big)}  \Big| \\
	 		& \qquad =: I_1(m,n)+I_2(m,n)+I_3(m).
	 	\end{split}
	 \end{equation*}
	 Without loosing any generality, we may assume that $f$ is Lipschitz continuous. Hence, we can argue as follows:
	 \begin{equation*}
	 	\begin{split}
	 		& \sup_{n\geq 1}  \Big| \E{f(u^n(t_1,x_1),\dots,u^n(t_k,x_k))-f(u_m^n(t_1,x_1),\dots,u_m^n(t_k,x_k))}  \Big| \\
	 		& \qquad \leq C \sup_{n\geq 1} \E{\Big(  \sum_{j=1}^{k} |u^n_m(t_j,x_j)-u^n(t_j,x_j)|^2  \Big)^{1/2}} \\
	 		& \qquad \leq  C \sup_{n\geq 1} \Big( \sum_{j=1}^{k} \E{|u^n_m(t_j,x_j)-u^n(t_j,x_j)|^2 }\Big)^{1/2} \\
	 		& \qquad \leq C k^{\frac12} \Big( \sup_{n\geq 1} \sup_{(t,x)\in [0,T]\times \R^d}
	 		\E{|u^n_m(t,x)-u^n(t,x)|^2 }\Big)^{1/2}.
	 	\end{split}
	 \end{equation*}
 Note that the latter term converges to $0$ as $m\to \infty$, by \eqref{eq:412}.
Thus, also taking into account \eqref{eq:413}, for any $\varepsilon>0$, there exists $m_0\geq 1$ such that, for all $m\geq m_0$, we have
	 $$ \sup_{n\geq 1} \Big(I_1(m,n)+I_3(m)\Big) \leq \frac{\varepsilon}{2}.$$
	 In particular, we have
	 \begin{equation*}
	 	\begin{split}
	 		\Big|\E{f(u^n(t_1,x_1), \dots,u^n(t_k,x_k))-f(u(t_1,x_1),\dots,u(t_k,x_k))}\Big|
	 		\leq  I_2(m_0,n) +\frac{\varepsilon}{2}.
	 	\end{split}
	 \end{equation*}
	  Finally, we observe that the convergence in law
	 \eqref{eq:245} implies the corresponding convergence of the finite dimensional distributions. Therefore, for some $n_0\geq 1$, we have, for all $n\geq n_0$,
	 $I_2(m_0,n)<\frac{\varepsilon}{2}$.
	 Hence,
	 $$\Big|\E{f(u^n(t_1,x_1), \dots,u^n(t_k,x_k))-f(u(t_1,x_1),\dots,u(t_k,x_k))}\Big|<\varepsilon.$$
	 Since $\varepsilon$ can be taken arbitrary small, we can conclude the proof.
\end{proof}


\appendix

\section{Tightness criterion}
\label{sec: app}

In the paper, we have made use of the following tightness criterion several times. 
Although this result seems to be well-known, we have not been able to find a proof in the literature, so we will give it for the sake of completeness.

\begin{theorem}\label{thm: criterion}
	Let $\{X_\lambda\}_{\lambda\in\Lambda}$ be a family of random variables in $\mathcal C(R)$, where $R$ is a closed rectangle of $\mathbb R^m$ that contains the origin. Then, the family of their laws is tight if the following conditions are fulfilled:
	\begin{itemize}
		\item[(a)] The laws of $\{X_\lambda(0)\}_{\lambda\in\Lambda}$ form a tight family.
		\item[(b)] There exist constants $C>0$, $\gamma\ge 1$ and  $\alpha>m$ such that, for all
		$x,y\in \R^m$,
		$$\sup_{\lambda\in \Lambda}\E{|X_\lambda(x)-X_\lambda(y)|^\gamma}\le C\,|x-y|^\alpha.$$
	\end{itemize}
\end{theorem}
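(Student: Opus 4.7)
The plan is to verify tightness via the Arzel\`a--Ascoli characterisation of compactness in $\C(R)$: for every $\eta>0$, it suffices to exhibit a relatively compact set $K_\eta\subset\C(R)$ with $\sup_{\lambda\in\Lambda}\mathbb{P}(X_\lambda\notin K_\eta)\leq\eta$. The natural candidate is the H\"older ball
\[
K_{L,N}^\beta := \left\{f\in\C(R) : |f(0)|\leq L,\ \sup_{x\neq y}\frac{|f(x)-f(y)|}{|x-y|^\beta}\leq N\right\},
\]
which is relatively compact in $\C(R)$ for any $L,N>0$ and any $\beta>0$. Condition (a) immediately yields $L=L(\eta)$ with $\sup_\lambda\mathbb{P}(|X_\lambda(0)|>L)<\eta/2$, so the whole task reduces to producing a uniform-in-$\lambda$ tail bound on the H\"older seminorm of $X_\lambda$ for some exponent $\beta>0$. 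This is essentially a quantitative, uniform-in-$\lambda$ version of Kolmogorov's continuity theorem in $m$ dimensions, which I would obtain by a dyadic chaining argument.

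After an affine change of variables I may assume $R=[0,1]^m$. For $n\geq 0$, set $D_n := (2^{-n}\mathbb{Z})^m\cap R$ and
\[
M_\lambda^{(n)} := \max\left\{|X_\lambda(s)-X_\lambda(t)| : s,t\in D_n,\ |s-t|_\infty\leq 2^{-n}\right\}.
\]
Since $|D_n|$ is of order $2^{nm}$, each point of $D_n$ has $O(1)$ neighbours, and hypothesis (b) yields $\E{|X_\lambda(s)-X_\lambda(t)|^\gamma}\leq C\,2^{-n\alpha}$ for each such pair, a combination of Markov's inequality and a union bound will give, for any $N\geq 1$,
\[
\mathbb{P}\bigl(M_\lambda^{(n)}>N\,2^{-n\beta}\bigr)\leq \frac{C\,2^{-n(\alpha-m-\beta\gamma)}}{N^\gamma}.
\]
Fixing $\beta\in(0,(\alpha-m)/\gamma)$---possible precisely because the assumption $\alpha>m$ leaves room---and summing in $n$, the resulting geometric series yields $\sup_\lambda\mathbb{P}(T_\lambda>N)\leq C'/N^\gamma$, where $T_\lambda := \sup_{n\geq 0} 2^{n\beta}M_\lambda^{(n)}$.

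The final ingredient is a standard dyadic chaining step upgrading this to control of the full H\"older seminorm. For $s,t\in\bigcup_n D_n$, I would take $n_0$ to be the largest integer with $|s-t|_\infty\leq 2^{-n_0}$ together with dyadic approximants $s_n,t_n\in D_n$ of $s,t$ satisfying $|s-s_n|_\infty,|t-t_n|_\infty\leq 2^{-n}$; telescoping and using that consecutive approximants are within sup-distance $3\cdot 2^{-n}$, while $|s_{n_0}-t_{n_0}|_\infty\leq 3\cdot 2^{-n_0}$, each step decomposes into finitely many neighbour moves in $D_n$ and one obtains
\[
|X_\lambda(s)-X_\lambda(t)|\leq C_m\sum_{n\geq n_0}M_\lambda^{(n)}\leq C_m\,T_\lambda\sum_{n\geq n_0}2^{-n\beta}\leq C'_m\,T_\lambda\,|s-t|^\beta.
\]
Path continuity extends this to arbitrary $s,t\in R$. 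Given $\eta>0$, choosing $L$ and $N$ large enough so that both $\sup_\lambda\mathbb{P}(|X_\lambda(0)|>L)<\eta/2$ and $\sup_\lambda\mathbb{P}(C'_m\,T_\lambda>N)<\eta/2$ gives $\sup_\lambda\mathbb{P}(X_\lambda\notin K_{L,N/C'_m}^\beta)<\eta$, establishing tightness. I expect the main technical obstacle to be the $m$-dimensional combinatorial book-keeping in the chaining step (routine in dimension one but requiring some care here); the factor $2^{nm}$ from the grid size is exactly what the exponent $\alpha>m$ absorbs, which is why that assumption is the natural multi-parameter analogue of $\alpha>1$ in the one-parameter Kolmogorov criterion.
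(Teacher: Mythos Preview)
Your argument is correct. Both your proof and the paper's establish a uniform-in-$\lambda$ H\"older estimate of the form $|X_\lambda(x)-X_\lambda(y)|\leq Z_\lambda\,|x-y|^\beta$ for some $\beta\in\bigl(0,(\alpha-m)/\gamma\bigr)$ with $\sup_\lambda$ control on the tails (or moments) of $Z_\lambda$, and then conclude tightness via Arzel\`a--Ascoli (equivalently, the Billingsley modulus-of-continuity criterion). The difference lies in how $Z_\lambda$ is produced. The paper invokes the Garsia--Rodemich--Rumsey inequality on $R$ with $\psi(u)=|u|^\gamma$ and $p(u)=|u|^{(k+2m)/\gamma}$, $k\in(0,\alpha-m)$, which yields directly $|X_\lambda(x)-X_\lambda(y)|\leq C\,\Gamma_\lambda^{1/\gamma}\,|x-y|^{k/\gamma}$ with $\sup_\lambda\E{\Gamma_\lambda}<\infty$; a single Chebyshev then gives the uniform modulus bound. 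You instead carry out the classical Kolmogorov--Centsov dyadic chaining by hand, producing $T_\lambda=\sup_n 2^{n\beta}M_\lambda^{(n)}$ with a uniform polynomial tail. Your route is entirely self-contained and avoids citing GRR, at the price of the $m$-dimensional combinatorial book-keeping you yourself flag; the paper's route outsources precisely that book-keeping to GRR and is correspondingly shorter and cleaner. Either way, the hypothesis $\alpha>m$ is used in the same place: in your proof to make the series $\sum_n 2^{-n(\alpha-m-\beta\gamma)}$ converge, in the paper's proof to make $\int_R\int_R|x-y|^{-(k+2m-\alpha)}\,dx\,dy$ finite.
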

\begin{remark} 
	If we have a family of random variables $\{X_\lambda\}_{\lambda\in\Lambda}$ in $\mathcal C([0,T]\times \mathbb R^d)$, for some $T>0$ and $d\in\mathbb{N}$, endowed with the topology of the uniform convergence on compact sets, the family of their laws is tight if the above conditions are satisfied for any closed rectangle $R\subset [0,T]\times \mathbb R^d$ with $0\in R$.
\end{remark}

In order to prove Theorem \ref{thm: criterion}, we will use the following result, which is a direct extension of \cite[Thm. 7.3]{Bill} to our setting.
\begin{theorem}\label{billingsley}
	Let $\{X_\lambda\}_{\lambda\in\Lambda}$ be a family of random variables in $\mathcal C(R)$, where $R$ is a closed rectangle of $\mathbb R^m$ that contains the origin. Then, the family of their laws is tight if and only if the following conditions are satisfied:
	\begin{itemize}
		\item[(i)] The laws of $\{X_\lambda(0)\}_{\lambda\in\Lambda}$ form a tight family.
		\item[(ii)] For any $\varepsilon>0$ and $\eta>0$, there exists  $\delta\in(0,1)$ such that,
		for any $\lambda\in \Lambda$,		
		$$\mathbb{P}\Bigg\{\sup_{\underset{|x-y|<\delta}{x,y\in R}}
		|X_\lambda(x)-X_\lambda(y)|\ge\varepsilon\Bigg\}\le \eta.$$
	\end{itemize}
\end{theorem}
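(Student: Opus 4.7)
The plan is to follow the classical proof of Billingsley's Theorem 7.3 almost verbatim, replacing the interval $[0,1]$ by the compact rectangle $R\subset\mathbb{R}^{m}$ and the one-dimensional modulus of continuity by its multidimensional analogue. The main analytic ingredient is the Arzelà--Ascoli theorem for $\mathcal{C}(R)$ endowed with the uniform topology, which states that a set $A\subset \mathcal{C}(R)$ is relatively compact if and only if $\sup_{x\in A}|x(0)|<\infty$ and
\[
\lim_{\delta\to 0}\sup_{x\in A} w(x,\delta)=0,\qquad
w(x,\delta):=\sup_{\substack{s,t\in R\\ |s-t|<\delta}}|x(s)-x(t)|.
\]
Since $R$ is compact, tightness of the family of laws on $\mathcal{C}(R)$ is equivalent (by Prohorov's theorem) to the existence, for every $\varepsilon>0$, of a compact set $K\subset\mathcal{C}(R)$ with $\mathbb{P}\{X_\lambda\in K\}\ge 1-\varepsilon$ for all $\lambda\in\Lambda$.

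For the \emph{sufficiency} direction, I would assume (i) and (ii) and fix $\varepsilon>0$. Using (i), pick $M>0$ with $\sup_{\lambda}\mathbb{P}\{|X_\lambda(0)|>M\}\le \varepsilon/2$. Using (ii), for each integer $k\ge 1$ choose $\delta_k\in(0,1)$ such that
\[
\sup_{\lambda\in\Lambda}\mathbb{P}\bigl\{w(X_\lambda,\delta_k)\ge 1/k\bigr\}\le \varepsilon\,2^{-(k+1)}.
\]
Define
\[
A:=\bigl\{x\in\mathcal{C}(R): |x(0)|\le M,\; w(x,\delta_k)\le 1/k\text{ for every }k\ge 1\bigr\}.
\]
By construction $A$ satisfies the Arzelà--Ascoli criteria, so $K:=\overline{A}$ is compact. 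A union bound gives
\[
\mathbb{P}\{X_\lambda\notin K\}\le \mathbb{P}\{|X_\lambda(0)|>M\}+\sum_{k\ge 1}\mathbb{P}\{w(X_\lambda,\delta_k)>1/k\}\le \varepsilon,
\]
uniformly in $\lambda$, which yields tightness.

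For the \emph{necessity} direction, I would fix $\varepsilon,\eta>0$ and use tightness to produce a compact set $K\subset\mathcal{C}(R)$ with $\inf_\lambda \mathbb{P}\{X_\lambda\in K\}\ge 1-\eta$. The bound $\sup_{x\in K}|x(0)|<\infty$ from Arzelà--Ascoli immediately gives (i), since the laws of $X_\lambda(0)$ are then all supported, with probability $\ge 1-\eta$, on a fixed bounded set of $\mathbb{R}$. The equicontinuity part of Arzelà--Ascoli furnishes $\delta\in(0,1)$ with $\sup_{x\in K}w(x,\delta)<\varepsilon$, and therefore
\[
\mathbb{P}\bigl\{w(X_\lambda,\delta)\ge\varepsilon\bigr\}\le \mathbb{P}\{X_\lambda\notin K\}\le \eta,
\]
which is exactly (ii).

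The only nontrivial point is the multidimensional Arzelà--Ascoli step, and specifically the verification that $A$ as defined above is indeed relatively compact when $R$ is a closed rectangle of $\mathbb{R}^m$ (one needs that pointwise boundedness at a single point, here the origin, together with a uniform modulus of continuity propagates to uniform boundedness on all of $R$; this is immediate because $R$ is compact and connected, so for $x\in A$ and $t\in R$ one has $|x(t)|\le |x(0)|+w(x,\mathrm{diam}(R))$, which is controlled by finitely many of the $1/k$). Once this is settled, the remainder of the argument is a direct transcription of the classical one-dimensional proof.
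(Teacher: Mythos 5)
Your proposal is correct and matches the approach the paper implicitly relies on: the paper offers no proof of this theorem, simply calling it ``a direct extension of \cite[Thm. 7.3]{Bill}'', and your argument is precisely the classical Arzel\`a--Ascoli/compactness proof of that result transcribed to a compact rectangle $R\subset\mathbb{R}^m$. The only cosmetic remarks are that the equivalence you invoke is just the definition of tightness rather than Prohorov's theorem, and that the uniform boundedness of $A$ should be obtained by chaining, i.e.\ $w(x,\mathrm{diam}(R))\le\lceil \mathrm{diam}(R)/\delta_1\rceil\, w(x,\delta_1)$ along segments in the convex set $R$, which is exactly the point your final parenthetical gestures at.
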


We will also borrow the following version of the well-known Lemma of Garsia-Rodemich-Rumsey for metric spaces (see Appendix A in \cite{Dalang-Khosh-Eulalia}).

\begin{theorem}\label{eulalia}
	Let $\psi:\mathbb R\rightarrow \mathbb R_+$ be a function which is convex, even, strictly increasing in $\R_+$ and such that $\psi(0)=0$ and $\psi(\infty)=\infty$. Let $p:[0,\infty)\rightarrow\mathbb R_+$ be continuous, strictly increasing and such that $p(0)=0$.
	
	Let $(S,\varrho)$ be a metric space and $\nu$ a Radon measure on $S$. 	
	If $f:S\rightarrow \mathbb R$ is a continuous function, define 
	$$\Gamma=\int_S\int_S \psi\left(\frac{f(x)-f(y)}{p(\varrho(x,y))}\right)\nu(dx)\nu(dy).
	$$
	Let also $B_\varrho(x,r)$ be the open ball with center $x\in S$ and radius $r$. Then, if $\Gamma$ is a finite constant, it holds, for any $s,t\in S$:
	\begin{equation*}
		|f(x)-f(y)|
		\le  4\int_0^{2\varrho(x,y)}\left[\psi^{-1}\left(\frac{\Gamma}{[\mu(B_\varrho(x,\frac{u}2))]^2}\right)+\psi^{-1}\left(\frac{\Gamma}{[\mu(B_\varrho(y,\frac{u}2))]^2}\right)\right]p(du).
	\end{equation*}
\end{theorem}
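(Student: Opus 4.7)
The plan is to adapt the classical Garsia--Rodemich--Rumsey iterative selection argument to the metric-measure setting. Fix $x,y\in S$ with $d_0:=\varrho(x,y)>0$ (the case $d_0=0$ being trivial by continuity of $f$). I shall construct two sequences $\{x_n\}_{n\ge 0}$ and $\{y_n\}_{n\ge 0}$ with $x_0=x$, $y_0=y$, $x_n\to x$ and $y_n\to y$, chosen so that $\varrho(x_n,x_{n+1})$ and $\varrho(y_n,y_{n+1})$ are controlled by dyadically decreasing radii $d_n:=2^{-n}d_0$, while the increments $|f(x_n)-f(x_{n+1})|$ and $|f(y_n)-f(y_{n+1})|$ are bounded by the integrand appearing in the claimed inequality.

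The engine of the construction is a Chebyshev-type selection: for any measurable set $A\subset S$ with $\nu(A)>0$, the finiteness of $\Gamma$ implies existence of $z\in A$ such that
\[
\int_S\psi\!\left(\frac{f(z)-f(v)}{p(\varrho(z,v))}\right)\nu(dv)\le \frac{\Gamma}{\nu(A)},
\]
and hence, by a further Markov step inside a sub-ball $B\subset A$, existence of $z'\in B$ with $\psi((f(z)-f(z'))/p(\varrho(z,z')))\le 2\Gamma/(\nu(A)\nu(B))$. Applying this iteratively with $A_n:=B_\varrho(x,d_n/2)$ and $B_n:=B_\varrho(x,d_{n+1}/2)\subset A_n$ produces $x_{n+1}\in B_n$ with
\[
|f(x_n)-f(x_{n+1})|\le \psi^{-1}\!\left(\frac{2\Gamma}{[\nu(B_\varrho(x,d_n/2))]^2}\right)p(d_n),
\]
using monotonicity of $p$ and $\varrho(x_n,x_{n+1})\le d_n$. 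The analogous construction yields the $y$-sequence.

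Telescoping then gives
\[
|f(x)-f(y)|\le \sum_{n\ge 0}|f(x_n)-f(x_{n+1})|+\sum_{n\ge 0}|f(y_n)-f(y_{n+1})|+\lim_{n\to\infty}|f(x_n)-f(y_n)|,
\]
where the last limit vanishes by continuity of $f$ since $x_n\to x$ and $y_n\to y$. Each of the two infinite series is compared to a Riemann-type sum for the integral $\int_0^{2d_0}\psi^{-1}(\Gamma/[\nu(B_\varrho(\cdot,u/2))]^2)p(du)$: since $u\mapsto \nu(B_\varrho(x,u/2))$ is non-decreasing and $p$ is increasing, the term at level $n$ is absorbed into the integral on $[d_{n+1},d_n]$, and the change of variable $u=2d_n$ (responsible for the upper limit $2\varrho(x,y)$) combined with the two separate series for $x$ and $y$ produces the factor $4$ in the stated bound.

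The main obstacle will be making the nested selection rigorous. At each scale one must guarantee that the two admissible subsets—those satisfying the respective Markov estimates on each factor of the product $A_n\times B_n$—have sufficient $\nu$-measure to intersect inside $B_n$, forcing the chosen pair to satisfy the desired pointwise bound on $\psi$. This is arranged by thresholding at $2\Gamma/(\nu(A)\nu(B))$, so each excluded set has measure at most half of the relevant ball, and the iteration remains consistent because the balls are nested. A minor technical point is the degenerate case $\nu(B_\varrho(x,r))=0$ for some $r$, in which the right-hand side of the claim is infinite and the inequality is vacuous; this case can be excluded at the outset.
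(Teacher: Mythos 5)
The paper does not actually prove Theorem \ref{eulalia}; it is quoted from Appendix A of \cite{Dalang-Khosh-Eulalia}, so there is no in-paper proof to compare against. Your overall strategy --- Chebyshev selection of points in nested balls, telescoping, and comparison of the resulting series with the integral --- is indeed the skeleton of the Garsia--Rodemich--Rumsey argument used in that reference, and your handling of the simultaneous selection (two exceptional sets, each of measure less than half the ball) is correct. Nevertheless, as written the argument has two genuine gaps.

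The first is structural. You set $x_0=x$, $y_0=y$, build the two chains inside balls centred at $x$ and at $y$ respectively, and then claim that $\lim_n|f(x_n)-f(y_n)|$ vanishes because $x_n\to x$ and $y_n\to y$. By continuity that limit equals $|f(x)-f(y)|$, which is exactly the quantity to be bounded, so your telescoping inequality collapses to $|f(x)-f(y)|\le 0+0+|f(x)-f(y)|$. Since the two chains live in disjoint families of balls, no step of the construction ever controls a cross increment between an $x_n$ and a $y_m$. The standard repair is to join the chains at the top scale: by the same Chebyshev argument, select a single common starting point $z_0=x_0=y_0$ in $B_\varrho(x,2\varrho(x,y))\cap B_\varrho(y,2\varrho(x,y))$ (nonempty of positive measure in the non-degenerate case, since it contains both $x$ and $y$), and run the two descending chains from $z_0$ towards $x$ and towards $y$. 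This joining step is also the source of the upper limit $2\varrho(x,y)$ and of the splitting into the two integrals in the statement.

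The second gap is the fixed dyadic choice $d_n=2^{-n}d_0$. Your step-$n$ increment is of order $\psi^{-1}\bigl(C\Gamma/[\nu(B_\varrho(x,d_{n+1}/2))]^2\bigr)\,p(d_n)$, and the block-by-block absorption into the integral requires an inequality of the type $p(d_n)\le C\bigl(p(d_{n+1})-p(d_{n+2})\bigr)$ with $C$ uniform in $n$. For a general continuous, strictly increasing $p$ with $p(0)=0$ this fails: $p$ can be nearly flat across arbitrarily many consecutive dyadic blocks, e.g.\ $p(u)=1/\log(e/u)$ near $0$ gives $p(d_n)\sim 1/n$, so $\sum_n p(d_n)$ diverges while $\int_0^{d_0}p(du)=p(d_0)$ is finite; if moreover $\nu(B_\varrho(x,r))$ is bounded below, the $\psi^{-1}$ factors stay bounded and your series is infinite while the right-hand side of the theorem is finite. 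The classical remedy --- and the one genuinely non-obvious ingredient of the GRR proof --- is to choose the radii adaptively through $p$, e.g.\ $p(d_{n+1})\le\tfrac12\,p(d_n)$ together with the nesting of the balls, which forces $p(d_{n-1})\le 4\bigl(p(d_n)-p(d_{n+1})\bigr)$ and legitimises the Riemann-sum comparison. With these two repairs (and the minor bookkeeping of the constants $2$ versus $4$ in the Chebyshev thresholds) your argument becomes the standard proof.
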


\begin{remark}\label{rmk:13}
	Define, for any $u>0$,
	$$g(u):=\inf_{r\in S}\mu\big(B_\varrho(u/2,r)\big),$$ 
	and we assume that the above infimum  is strictly positive. Then, under the hypotheses of Theorem \ref{eulalia} and taking into account that $\psi^{-1}$ is an increasing function, 
	we have, for all $x,y\in S$:
	$$|f(x)-f(y)|\le 8\int_0^{2\varrho(x,y)}\psi^{-1}\left(\frac{\Gamma}{g(u)^2}\right)p(du).$$
\end{remark}

\medskip

In Remark \ref{rmk:13}, we take $S= R$, where $R$ is a closed rectangle of $\mathbb R^m$ that contains the origin, $\varrho$ the euclidean distance and $\nu$ the Lebesgue measure. Then,
$g(u)=C_m u^m$. 
Moreover, if we define 
$$\Gamma:=\int_R\int_R \psi\left(\frac{f(x)-f(y)}{p(|x-y|)}\right)dxdy,$$
and we assume that $\Gamma<\infty$, Remark \ref{rmk:13} implies that, for all $x,y\in R$:
\begin{equation}\label{GRR}
	|f(x)-f(y)|\le 8\int_0^{2|x-y|}\psi^{-1}\left(\frac{\Gamma}{C_m^2\,u^{2m}}\right)p(du).
\end{equation}
\medskip

With all these ingredients at hand, we can tackle the proof of Theorem \ref{thm: criterion}.

\medskip

\noindent {\it Proof of Theorem \ref{thm: criterion}}. 
We only need to show that condition $(b)$ of  Theorem \ref{thm: criterion} implies the validity of $(ii)$ in Theorem \ref{billingsley}. We take
$\psi(x)=|x|^{\gamma}$ and $p(x)=|x|^{\frac{k+2m}{\gamma}}$, with $k\in(0,\alpha-m)$.
Then, we have
\begin{align}\label{Gamma}
		\mathbb E\left[\int_R\int_R   \psi\left(  \frac{X_\lambda (x)-X_\lambda(y)}{p(x-y)}    \right)dxdy\right]& 
	=\int_R\int_R\mathbb E\left[\frac{|X_\lambda(x)-X_\lambda(y)|^{\gamma}}   {|x-y|^{k+2m}}        \right]dxdy \nonumber\\
	& = \int_R\int_R \mathbb E\left[\frac{|X_\lambda(x)-X_\lambda(y)|^{\gamma}}   {|x-y|^{\alpha}} \right]\frac1{|x-y|^{k+2m-\alpha}}dxdy \nonumber \\
	& \le C\int_R\int_R \frac1{|x-y|^{k+2m-\alpha}}dxdy \le M,
\end{align}
for some constant $M$, where we have applied condition (b) of Theorem \ref{thm: criterion} and the fact that  
$$\int_B\int_B \frac1{|x-y|^\beta}dxdy<\infty,$$
for any ball $B\subset \mathbb R^m$ with center in $0$ and for all $\beta<m$.

The estimate (\ref{Gamma}) implies that the random variables $\Gamma_{\lambda}$ defined as 
$$\Gamma_\lambda=\int_R\int_R \psi\left( \frac{X_\lambda(x)-X_\lambda (y)}{p(|x-y|)}\right)dxdy, \quad \lambda\in \Lambda,$$
are almost surely finite and that their expectation is bounded by $M$.
By (\ref{GRR}), we obtain that, for any $x,y\in R$,
$$|X_\lambda(x)-X_\lambda(y)|\le C\int_0^{2|x-y|}\frac{\Gamma_\lambda^{1/\gamma}}{u^{2m/\gamma}}\, u^{\frac{k+2m}{\gamma}-1}\, du = C |x-y|^{k/\gamma}\,
\Gamma_\lambda^{1/\gamma},
$$
and this implies that, for any $ \delta\in (0,1)$,
$$\sup_{\overset{x,y\in R}{|x-y|<\delta}}|X_\lambda(x)-X_\lambda(y)|\le C \delta^{k/\gamma}\Gamma_\lambda^{1/\gamma}.$$
Finally, we can check that condition (ii) of Theorem \ref{billingsley} is satisfied. Indeed, fix $\varepsilon>0$ and $\eta>0$ and apply Chebyshev's inequality:
\begin{align*}
\mathbb{P}\Bigg\{\sup_{\overset{x,y\in R}{|x-y|<\delta}}|X_\lambda(x)-X_\lambda(y)|\ge\varepsilon \Bigg\} & \le
\frac{\E{\sup_{\overset{x,y\in R}{|x-y|<\delta}}|X_\lambda(x)-X_\lambda(y)|^\gamma} }{\varepsilon^{\gamma}}\\
& \le C\frac{\delta^{k}\E{\Gamma_\lambda}}{\varepsilon^\gamma}\\
& \le \frac{CM\delta^{k}}{\varepsilon^{\gamma}}.
\end{align*}
The latter quantity can be made less than or equal to $\eta$ if $\delta$ is small enough.
\qed


\section*{Acknowledgement}

Research supported by the grant PID2021-123733NB-I00 (Ministerio de Economía y Competitividad, Spain).


\end{document}